\documentclass[a4, 12pt]{article}
\usepackage[utf8]{inputenc}
\usepackage{t1enc}
\usepackage{amsfonts,amsmath,amssymb,amsthm,color,amsbsy}
\usepackage{color}
\usepackage{url}
\usepackage{eucal}
\usepackage{enumerate}
\usepackage{graphicx}
\usepackage{mathrsfs}
\usepackage{bm} 
\usepackage{enumerate}
\usepackage{amssymb}
\usepackage{amsmath}
\usepackage{amsthm}
\usepackage{amsfonts}
\usepackage{bbm}
\usepackage{anysize}
\marginsize{2 cm}{2 cm}{2 cm}{2 cm}
\theoremstyle{definition}
\newtheorem{df}{Definition}
\newtheorem{uw}[df]{Remark}
\theoremstyle{plain}

\newtheorem{thm}[df]{Theorem}
\newtheorem{prop}[df]{Proposition}
\newtheorem{lm}[df]{Lemma}
\newtheorem{wn}[df]{Corollary}

\def\Ker{\mathrm{Ker}\,}
\begin{document}
\title{Period Estimates for Autonomous Evolution Equations with Lipschitz Nonlinearities}
\author{Aleksander \'Cwiszewski, W{\l}adys{\l}aw Klinikowski,\\
{\em Faculty of Mathematics and Computer Science},\\
{\em Nicolaus Copernicus University,  Toru\'n, Poland}\\
{\em e-mail: aleks@mat.umk.pl, wklin@mat.umk.pl}}
\maketitle
\begin{abstract}
We derive an estimate for the minimal period of autonomous strongly damped hyperbolic problems. Our result corresponds to the works by Yorke \cite{Yorke}, Busenberg et al. \cite{Busenberg-et-al} for ordinary differential equations as well as Robinson and Vidal-Lopez \cite{Rob-Vid-Lop-2006} and \cite{Rob-Vid-Lop} for parabolic problems. A general approach is developed for treating both hyperbolic and parabolic problems. An example of application to a class of beam equations is provided. 
\end{abstract}
\section{Introduction}
The known estimates for ordinary differential equations comes from Yorke who proved in \cite{Yorke} that if $T>0$ is the minimal period of a solution to  an ordinary differential equation
\begin{equation}\label{ode}
\dot u(t) = f(u(t)),
\end{equation} 
where $f:\mathbb{R}^N\to \mathbb{R}^N$ is Lipschitz with constant $L>0$, then $T \geq 2\pi/L$. 
The same estimate was showed in the infinite dimensional phase space by Busenberg et al. in \cite{Busenberg-et-al} where \eqref{ode} is considered in a Hilbert space. For \eqref{ode} in a general Banach space, the estimate $T \geq 6/L$ was proved therein. A natural question arises if there is any period estimate in case of partial differential equations and systems.  In case of the abstract parabolic problems of the form 
\begin{equation}\label{parabolic-ev-eq}
\dot u(t) + A u(t) =  f(u(t)),\, t>0,
\end{equation}
where $A$ is a self-adjoint positive operator in a separable Hilbert space $X$ and a Lipschitz function $f:X^{\beta}\to X$ is defined on  the fractional space $X^\beta$ with $\beta\in [0,1)$  (associated to the operator $A$ -- see e.g. \cite{Henry}), Robinson and Vidal-Lopez in  \cite{Rob-Vid-Lop} (see also \cite{Rob-Vid-Lop-2006}) obtained the following lower bound 
\begin{equation}\label{Robinson-Vidal-Lopez-estimate}
T \geq \left(2^{1-\beta}+(\beta/e)^{\beta}/(1-e^{-1/2})(1-\beta) \right)^{-1/(1-\beta)}\cdot L^{- 1 / (1 - \beta)}.
\end{equation}
In this paper we shall revisit the parabolic problem, slightly improving the above estimate \eqref{Robinson-Vidal-Lopez-estimate} and deal with the minimal period for hyperbolic problems of the form 
\begin{equation}\label{E25}
\ddot{u}(t)+\alpha A \dot u(t) + Au(t) = f(u(t), \dot u(t)),\ t > 0,
\end{equation}
where $A$ is as above and $\alpha>0$ is a damping coefficient. In the hyperbolic case we assume, additionally, that $A$ has compact resolvent, which means that the spectrum $\sigma(A)$ consists of positive $\lambda_n$, $n\in \mathbb{N}$, such that $\lambda_n \to +\infty$ as $n\to +\infty$.
The nonlinear term $f:X^{1/2}\times X \to X$ ($X^{1/2}$ is the fractional space determined by $A$) is Lipschitz, i.e. there exists $L>0$ such that, for any $u_1, u_2\in X^{1/2}$ and $v_1, v_2\in X$,
\begin{equation}\label{E101}
\|f(u_1,v_1)-f(u_2,v_2)\|\leq L\left(\|u_1-u_2\|_{1/2}^{2}+\|v_1-v_2\|^{2}\right)^{1/2}
\end{equation}
where $\|\cdot\|_{1/2}$ stands for the norm in $X^{1/2}$. It is an abstract model for many physical (systems of) equations, including systems with the so-called strongly damped beam that is fixed at both ends (see Section 6).\\
\indent Let us make general comments on the hyperbolic case. When $\alpha=0$ there is no lower bound for periods of periodic solutions of \eqref{E25}. Indeed, if $e\in X\setminus \{0\}$ is the eigenvector for $A$, corresponding to an eigenvalue $\lambda>0$ and $f(u,v):=L\cdot u$, with $L<\lambda$, then $u(t):=\sin(t\sqrt{\lambda-L}) \cdot e$ is a well-defined periodic solutions of \eqref{E25} with the minimal period $2\pi/\sqrt{\lambda-L}$. Therefore, if the eigenvalues of the operator $A$ make an unbounded set, then we have a periodic solution of \eqref{E25} of arbitrarily small period. Another case where one can not expect any minimal period estimate is when \eqref{E25} is gradient-like, that is there exists a Lyapunov functional, i.e. a functional that decreases/increases along nontrivial trajectories (see e.g. \cite{Hale-book}). Then obviously \eqref{E25} has no nonstationary periodic solutions. However, like in the parabolic case, there are classes of nonlinearities $f$ for which hyperbolic partial differential equations with strong damping are not gradient-like (see Section 6) and when periodic solutions occur. Note that also systems of hyperbolic equations are not gradient-like if only the linear perturbation field $f$ does not come from a gradient field.\\
\indent In order to study \eqref{E25} we rewrite the problem as a system
$$ \left\{  \begin{array}{l}
\dot u = v \\
\dot v = -A (\alpha \cdot v+u) + f (u,v),     
\end{array} \right. $$
which in turn can be represented as the first order equation
\begin{equation}\label{1st-order}
\dot z = {\bold A} z + {\bold F} (z),\, t>0,
\end{equation}
where $z=(u,v)$ and the operator 
${\bold A}: D({\bold A}) \to {\bold X}$ in ${\bold X} := X^{1/2} \times X$ is given by
\begin{equation}\label{boldA-formula}
{\bold A} (u,v) :=  (v, -A (\alpha \cdot v+u) ), \ (u,v) \in D({\bold A}),
\end{equation}
with $D({\bold A}) := \{ (u,v) \in {\bold X}\, \mid \, \alpha \cdot v+u \in D(A),\, v\in X^{1/2} \}$.
The nonlinear term ${\bold F}:{\bold X}\to {\bold X}$ is given by ${\bold F}(u,v)=(0, f(u,v))$ and is also Lipschitz with the constant $L$ if ${\bold X}$ is equipped with the norm given by the scalar product
$\langle(u_1, v_1),(u_2,v_2)\rangle = \langle u_1, u_2{\rangle}_{1/2} + \langle v_1, v_2\rangle$. It is well-known that $-{\bold A}$ is sectorial (see e.g. \cite{Fitzgibbon}, \cite{Massatt} or \cite{Cwiszewski_Rybakowski}). One cannot apply the result by Robinson and Vidal-Lopez from \cite{Rob-Vid-Lop} due to the fact that ${\bold A}$  is not symmetric (as it is assumed therein), neither a direct application of the idea sketched in remarks after the proof of Theorem 3.2 in \cite{Rob-Vid-Lop} will provide satisfactory estimates. In this paper we develop an approach that can be effectively applied to the hyperbolic case.\\ 
\indent We come up with a unified approach allowing to consider both parabolic and hyperbolic  partial differential systems/equations. We first consider a general operator $\mathbb{A}: D(\mathbb{A})\subset\mathbb{X}\to \mathbb{X}$
being an infinitesimal generator of a $C_0$ semigroup of bounded linear operators $e^{t\mathbb{A}}:\mathbb{X}\to \mathbb{X}$, $t\geq 0$, on a Banach space $(\mathbb{X}, \|\cdot\|_{\mathbb{X}} )$. Assume that $( \mathbb{V} , \| \cdot \|_{\mathbb{V}} )$ is a Banach space such that\\
$(V_1)$ \parbox[t]{144mm}{$D(\mathbb{A})\subset \mathbb{V} \subset \mathbb{X}$;}\\[0.5em]
$(V_2)$ \parbox[t]{144mm}{$\mathbb{V}$ is embedded continuously into $\mathbb{X}$;  
}\\[0.5em]
$(V_3)$ \parbox[t]{144mm}{$e^{t\mathbb{A}} (\mathbb{X})\subset \mathbb{V}$, for all $t>0$, and the family $\{ e^{t\mathbb{A}} |_\mathbb{V}:\mathbb{V}\to \mathbb{V}\}_{t\geq 0}$, of operators restricted to $\mathbb{V}$, is a $C_0$ semigroup of bounded linear operators on $\mathbb{V}$.}\\[0.5em]
We say that $\mathbb{A}$ has the {\em uniformly half-bounded decomposition} property, in short property $(UHBD)$, whenever there exists $\mu_0\geq 0$, $M>0$ and a continuous $m: (0,+\infty) \to (0,+\infty)$ such that, for any $\mu>\mu_0$, there exists a decomposition $\mathbb{X} = \mathbb{X}_{\mu}^{-} \oplus \mathbb{X}_{\mu}^{+}$ into closed subspaces such that\\[0.5em]
$(D_{\mu,+})$ \parbox[t]{144mm}{ $\mathbb{X}_{\mu}^{+}\subset D(\mathbb{A})$, $\mathbb{A}\mathbb{X}_{\mu}^{+} \subset \mathbb{X}_{\mu}^{+}$ and $\|\mathbb{A} w \|_{\mathbb{V}} \leq \mu M \|w\|_{\mathbb{V}}$,   for all $w\in \mathbb{X}_{\mu}^{+}$;}\\[0.5em]
$(D_{\mu,-})$ \parbox[t]{144mm}{
$\mathbb{A}(\mathbb{X}_{\mu}^{-}\cap D(\mathbb{A})) \subset 
\mathbb{X}_{\mu}^{-}$, $\|e^{t\mathbb{A}} w\|_{\mathbb{X}}\leq e^{-\mu t}\|w\|_{\mathbb{X}}$,  $\|e^{t\mathbb{A}} w\|_{\mathbb{V}}\leq m(t) e^{-\mu t}\|w\|_{\mathbb{X}}$, for all $w\in \mathbb{X}_{\mu}^{-}$ and $t>0$, and $\|e^{t\mathbb{A}} w\|_{\mathbb{V}}\leq e^{-\mu t}\|w\|_{\mathbb{V}}$,  for all  $w\in \mathbb{X}_{\mu}^{-}\cap \mathbb{V}$ and $t>0$;
}\\[0.5em]
$(D_{\mu,0})$ \parbox[t]{144mm}{$\int_{0}^{t} m(s)e^{-\mu s}\, ds < +\infty$ for all $t>0$.
}\\[0.5em]
A continuous function $z: (T_1,T_2) \to \mathbb{V}$, $T_1<T_2$, is said to be  {\em a mild solution} of
\begin{equation}\label{abstract-ev-eq}
\dot{z} = \mathbb{A}z+\mathbb{F} (z) 
\end{equation}
if and only if, for any $t,t_0 \in (T_1,T_2)$ with $t_0<t$,
\begin{equation}\label{Duhamel-eq-def}
z(t) = e^{(t-t_0)\mathbb{A}} z(t_0) + \int_{t_0}^{t} e^{(t-s)\mathbb{A}} \mathbb{F}(z(s))\, ds. 
\end{equation}
In this abstract setting we get the following result.
\begin{thm} \label{main-res-abs}
Suppose that the generator $\mathbb{A}$ of a $C_0$-semigroup of bounded linear operators in a Banach space $\mathbb{X}$ satisfies $(V_1)$--$(V_3)$ and has property $(UHBD)$ with some Banach space $\mathbb{V}\subset \mathbb{X}$ and $\mathbb{F}:\mathbb{V}\to \mathbb{X}$ is Lipschitz with constant $L>0$. If there exists a nonstationary $T$-periodic mild solution $z:\mathbb{R}\to \mathbb{V}$ of 
\eqref{abstract-ev-eq}, then either $T\geq 1/\mu_0 M$ or, for all $\mu\in(\mu_0, 1/MT)$, 
\begin{equation}\label{technical-ineq}
1 \leq   T L 
\cdot \left[ \frac{K_{\mu}^{+}}{1-\mu M T} +  
\frac{K_{\mu}^{-}}{\mu T} \left(e^{-\mu T}m(T)  + \int_{0}^{\mu T} m(s/\mu)\cdot e^{- s} \, ds \right) \right]
\end{equation}
with $K_\mu^{+}:= \| \mathbb{P}_{\mu}^{+}\|_{{\cal L}(\mathbbm{X},\mathbbm{V})}$ and $K_\mu^{-}:= \| \mathbb{P}_{\mu}^{-}\|_{{\cal L}(\mathbbm{X},\mathbbm{X})}$, 
where $\mathbb{P}_{\mu}^{+}: \mathbbm{X} \to \mathbb{X}_{\mu}^{+}$
and $\mathbb{P}_{\mu}^{-}: \mathbbm{X} \to \mathbb{X}_{\mu}^{-}$ are the projections.
\end{thm}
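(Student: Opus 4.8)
The plan is to run a Yorke-type argument on time-shifts of the periodic orbit rather than on the orbit itself, so as never to differentiate the merely Lipschitz nonlinearity $\mathbb{F}$. Fix $h\in(0,T)$ for which $\delta:=z(\cdot+h)-z(\cdot)\not\equiv 0$; such an $h$ exists, since otherwise $z$ would be constant, contrary to nonstationarity. By autonomy both $z(\cdot+h)$ and $z(\cdot)$ are mild solutions, so subtracting the two Duhamel formulas \eqref{Duhamel-eq-def} gives, for $t_0<t$,
\begin{equation*}
\delta(t)=e^{(t-t_0)\mathbb{A}}\delta(t_0)+\int_{t_0}^{t}e^{(t-s)\mathbb{A}}G(s)\,ds,\qquad G(s):=\mathbb{F}(z(s+h))-\mathbb{F}(z(s)),
\end{equation*}
with $\|G(s)\|_{\mathbb{X}}\le L\|\delta(s)\|_{\mathbb{V}}\le LN$, where $N:=\sup_{t}\|\delta(t)\|_{\mathbb{V}}\in(0,\infty)$ (finite and positive by continuity, $T$-periodicity and $\delta\not\equiv0$). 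Moreover $\delta$ is $T$-periodic, whence $\int_{t-T}^{t}\delta(s)\,ds=0$ for every $t$, as the integral of $z$ over any length-$T$ window is the same. Assume $T<1/(\mu_0 M)$ (otherwise we are in the first alternative), fix $\mu\in(\mu_0,1/MT)$, apply the projections $\mathbb{P}_\mu^{\pm}$ from the $(UHBD)$-decomposition, and set $\delta^{\pm}:=\mathbb{P}_\mu^{\pm}\delta$, $G^{\pm}:=\mathbb{P}_\mu^{\pm}G$. Since $\mathbb{X}_\mu^{\pm}$ are $\mathbb{A}$-invariant, $\mathbb{P}_\mu^{\pm}$ commutes with $e^{t\mathbb{A}}$ and with the integral, so each $\delta^{\pm}$ obeys the same Duhamel relation on $\mathbb{X}_\mu^{\pm}$; the bounds $\|G^{+}(s)\|_{\mathbb{V}}\le K_\mu^{+}LN$ and $\|G^{-}(s)\|_{\mathbb{X}}\le K_\mu^{-}LN$ follow from the definitions of $K_\mu^{\pm}$. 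I will bound $\sup_t\|\delta^{+}\|_{\mathbb{V}}$ and $\sup_t\|\delta^{-}\|_{\mathbb{V}}$ separately and then combine them at a maximizer of $\|\delta(\cdot)\|_{\mathbb{V}}$.

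For the stable part I send $t_0\to-\infty$. By $(D_{\mu,-})$ the boundary term satisfies $\|e^{(t-t_0)\mathbb{A}}\delta^{-}(t_0)\|_{\mathbb{X}}\le e^{-\mu(t-t_0)}\|\delta^{-}(t_0)\|_{\mathbb{X}}\to0$, which together with the estimates below (absolute convergence in $\mathbb{V}$) yields $\delta^{-}(t)=\int_{-\infty}^{t}e^{(t-s)\mathbb{A}}G^{-}(s)\,ds$. I split this integral at $s=t-T$. On the recent window $t-s=:\tau\in[0,T]$ the smoothing bound $\|e^{\tau\mathbb{A}}w\|_{\mathbb{V}}\le m(\tau)e^{-\mu\tau}\|w\|_{\mathbb{X}}$ gives a contribution $\le K_\mu^{-}LN\int_{0}^{T}m(\tau)e^{-\mu\tau}\,d\tau$, finite by $(D_{\mu,0})$. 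On the far window $\tau\ge T$ I factor $e^{\tau\mathbb{A}}=e^{(\tau-T)\mathbb{A}}e^{T\mathbb{A}}$: the smoothing bound at time $T$ sends $G^{-}(s)$ into $\mathbb{X}_\mu^{-}\cap\mathbb{V}$, and the pure $\mathbb{V}$-decay $\|e^{(\tau-T)\mathbb{A}}w\|_{\mathbb{V}}\le e^{-\mu(\tau-T)}\|w\|_{\mathbb{V}}$ from $(D_{\mu,-})$ then yields $\|e^{\tau\mathbb{A}}G^{-}(s)\|_{\mathbb{V}}\le m(T)e^{-\mu\tau}\|G^{-}(s)\|_{\mathbb{X}}$, hence a contribution $\le K_\mu^{-}LN\,m(T)\int_{T}^{\infty}e^{-\mu\tau}\,d\tau=K_\mu^{-}LN\,m(T)e^{-\mu T}/\mu$. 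Summing and using the substitution $s=\mu\tau$ in $\int_0^T m(\tau)e^{-\mu\tau}\,d\tau$ gives $\sup_t\|\delta^{-}\|_{\mathbb{V}}\le \frac{K_\mu^{-}L}{\mu}\big(e^{-\mu T}m(T)+\int_{0}^{\mu T}m(s/\mu)e^{-s}\,ds\big)N$.

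For the unstable part I use that, by $(D_{\mu,+})$, $\mathbb{A}$ restricted to $\mathbb{X}_\mu^{+}\subset D(\mathbb{A})\subset\mathbb{V}$ (the last inclusion by $(V_1)$) is bounded with $\|\mathbb{A}w\|_{\mathbb{V}}\le\mu M\|w\|_{\mathbb{V}}$; since $G^{+}$ is continuous into $\mathbb{V}$, the function $\delta^{+}$ is $C^1$ in $\mathbb{V}$ and solves the genuine ODE $\dot\delta^{+}=\mathbb{A}\delta^{+}+G^{+}$. Let $t^{+}$ maximize $\|\delta^{+}(\cdot)\|_{\mathbb{V}}$, with value $N_{+}$. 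Averaging the identity $\delta^{+}(t^{+})=\delta^{+}(s)+\int_{s}^{t^{+}}\dot\delta^{+}(\tau)\,d\tau$ over $s\in[t^{+}-T,t^{+}]$ and invoking the zero-mean property $\int_{t^{+}-T}^{t^{+}}\delta^{+}(s)\,ds=\mathbb{P}_\mu^{+}\int_{t^{+}-T}^{t^{+}}\delta(s)\,ds=0$ eliminates the base-point term, leaving $\delta^{+}(t^{+})=\frac1T\int_{t^{+}-T}^{t^{+}}\int_{s}^{t^{+}}\dot\delta^{+}(\tau)\,d\tau\,ds$. Estimating $\|\dot\delta^{+}(\tau)\|_{\mathbb{V}}\le\mu M\,N_{+}+K_\mu^{+}LN$ and bounding the elementary double integral by $T$ gives $N_{+}\le T(\mu M N_{+}+K_\mu^{+}LN)$, i.e. $\sup_t\|\delta^{+}\|_{\mathbb{V}}=N_{+}\le \frac{TK_\mu^{+}L}{1-\mu M T}\,N$, where $1-\mu MT>0$ precisely because $\mu<1/MT$.

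Finally, choosing $t$ to be a maximizer of $\|\delta(\cdot)\|_{\mathbb{V}}$ and writing $\delta=\delta^{+}+\delta^{-}$, the two bounds give $N\le\big(\frac{TK_\mu^{+}L}{1-\mu MT}+\frac{K_\mu^{-}L}{\mu}\big(e^{-\mu T}m(T)+\int_0^{\mu T}m(s/\mu)e^{-s}\,ds\big)\big)N$; dividing by $N>0$ and factoring out $TL$ produces \eqref{technical-ineq}. I expect the unstable component to be the delicate point: it carries no exponential decay, so its control rests entirely on the a~priori norm bound $\mu M$ for $\mathbb{A}|_{\mathbb{X}_\mu^{+}}$ together with the zero-mean consequence of periodicity, and it is exactly the absorption of the $\mu M N_{+}$ term that forces the admissibility threshold $\mu M T<1$ and hence the dichotomy with $T\ge 1/(\mu_0 M)$.
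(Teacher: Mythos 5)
Your proof is correct and lands on exactly the constants in \eqref{technical-ineq}; the overall architecture (shift-difference $\delta=z(\cdot+h)-z(\cdot)$, spectral splitting at level $\mu$, an ODE-plus-zero-mean argument on $\mathbb{X}_\mu^+$ and a decay-plus-smoothing argument on $\mathbb{X}_\mu^-$) is the paper's, and your unstable-part estimate coincides with Lemma \ref{D_fun-lemma}(i) step for step. Where you genuinely diverge is the stable part. The paper first derives the one-period identity \eqref{P_plus-eq}, writes $\mathbb{P}_\mu^-\delta(t)=(I-e^{T\mathbb{A}})^{-1}\int_0^Te^{(T-s)\mathbb{A}}\mathbb{P}_\mu^-G(t+s)\,ds$, and inverts $I-e^{T\mathbb{A}}$ on $\mathbb{X}_\mu^-\cap\mathbb{V}$ by a Neumann series (Lemma \ref{V-trick-lemma}), splitting $R_{\mu,T}^{-1}=(R_{\mu,T}^{-1}-I)+I$ to produce the two terms $\mu^{-1}e^{-\mu T}m(T)$ and $\int_0^Tm(s)e^{-\mu s}\,ds$. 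You instead let $t_0\to-\infty$ in Duhamel's formula, represent $\delta^-(t)=\int_{-\infty}^te^{(t-s)\mathbb{A}}G^-(s)\,ds$, and split at $t-T$: the near window yields $\int_0^Tm(\tau)e^{-\mu\tau}\,d\tau$ and the far window, after factoring $e^{\tau\mathbb{A}}=e^{(\tau-T)\mathbb{A}}e^{T\mathbb{A}}$ and using the $\mathbb{V}$-contraction on $\mathbb{X}_\mu^-\cap\mathbb{V}$ together with $(V_3)$, yields $m(T)e^{-\mu T}/\mu$. These are the same computation summed in a different order --- the Neumann series $\sum_{k\geq0}e^{kT\mathbb{A}}$ applied to the one-period integral is precisely your integral over $(-\infty,t]$ by periodicity of $G^-$ --- so the bounds agree term by term. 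What your route buys is the elimination of \eqref{P_plus-eq} and of the inversion lemma altogether, at the cost of justifying the improper integral, which you do correctly (the boundary term decays in $\mathbb{X}$ by $(D_{\mu,-})$, and absolute convergence in $\mathbb{V}$ follows from $(D_{\mu,0})$); a further small gain is that your choice of $h$ with $\delta\not\equiv0$ uses only nonstationarity, not minimality of $T$, matching the theorem as stated.
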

\noindent The above estimate does not provide an explicit estimate for $T$,  however in a special case we get the following result, which applies in the case of damped hyperbolic problems \eqref{E25} (see Theorem \ref{main-res} below).
\begin{wn}\label{cor-1}
If additionally to the assumptions of Theorem \ref{main-res-abs} we assume that $\mu_0>0$, $m\equiv 1$ and
\begin{equation}\label{additional-as-main-abs-res}
K_{\mu}^{-} \leq (1-\mu_0/\mu)^{-1}, \ \   K_{\mu}^{+} \leq (1-\mu_0/\mu)^{-1},\ \  \mbox{ for all } \ \mu>\mu_0,
\end{equation}
then the period of any nontrivial periodic solution of \eqref{abstract-ev-eq} satisfies the inequality
$$
T\geq  1/L\left(1+\sqrt{M(1+\mu_0/L)}\right)^2.
$$ 
\end{wn}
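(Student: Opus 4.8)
The plan is to feed the two simplifying hypotheses $m\equiv 1$ and \eqref{additional-as-main-abs-res} into the master inequality \eqref{technical-ineq} and then to minimise the resulting right-hand side over the free parameter $\mu$. First, with $m\equiv 1$ the bracketed factor multiplying $K_\mu^-$ collapses: since $e^{-\mu T}m(T)=e^{-\mu T}$ and $\int_{0}^{\mu T}m(s/\mu)e^{-s}\,ds=\int_{0}^{\mu T}e^{-s}\,ds=1-e^{-\mu T}$, their sum is exactly $1$, so the second summand in \eqref{technical-ineq} reduces to $K_\mu^-/(\mu T)$. Inserting the bounds $K_\mu^{\pm}\le(1-\mu_0/\mu)^{-1}$ then turns \eqref{technical-ineq} into
\[
1\le\frac{TL}{1-\mu_0/\mu}\left(\frac{1}{1-\mu MT}+\frac{1}{\mu T}\right),\qquad\text{for all }\mu\in(\mu_0,1/(MT)).
\]
Because this holds for every admissible $\mu$, it holds for the $\mu$ that makes the right-hand side smallest, and the whole problem becomes: minimise this expression and read off for which $T$ the minimum can still be $\ge1$.

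To make the optimisation transparent I would non-dimensionalise, writing $\tau=LT$, $\nu=\mu/L$ and $\nu_0:=\mu_0/L$, which recasts the inequality as $\tilde g(\nu)\ge1$ on the interval $(\nu_0,1/(M\tau))$, where $\tilde g(\nu)=\dfrac{1+\nu\tau(1-M)}{(\nu-\nu_0)(1-\nu M\tau)}$. The core computation is to clear denominators: on the open interval both factors of the denominator are positive, so $\tilde g(\nu)\ge1$ is equivalent to $\psi(\nu)\ge0$, where $\psi(\nu):=M\tau\,\nu^{2}+(\tau-1-M\tau P)\nu+P$ is an upward parabola and $P:=1+\mu_0/L$. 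A direct evaluation shows $\psi$ is positive at both endpoints of the admissible interval, namely $\psi(\nu_0)=1+\tau\nu_0(1-M)>0$ on the relevant branch $\tau<1/(M\nu_0)$ and $\psi(1/(M\tau))=1/M>0$; hence the requirement ``$\psi\ge0$ throughout'' can fail only if $\psi$ dips below zero in the interior, i.e. only if its discriminant is positive and its vertex lies inside $(\nu_0,1/(M\tau))$. Setting the discriminant of $\psi$ (in $\nu$) equal to zero yields the quadratic $\tau^{2}(1-MP)^{2}-2\tau(1+MP)+1=0$, whose roots factor neatly as $\tau=(1\pm\sqrt{MP})^{-2}$. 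The smaller root $\tau_*=(1+\sqrt{M(1+\mu_0/L)})^{-2}$ is the threshold, and since $T=\tau/L$ this is exactly the claimed bound.

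To finish I would tie up the logical ends. Assuming a nontrivial periodic solution with $\tau<\tau_*$, I would note $\tau_*<1/(M\nu_0)$ (because $(1+\sqrt{MP})^{2}>MP>M\nu_0$), so the first alternative $T\ge1/(\mu_0 M)$ of Theorem \ref{main-res-abs} is excluded and the inequality above must hold; then for $\tau<\tau_*$ the discriminant of $\psi$ is strictly positive, and a short check places the vertex of $\psi$ strictly inside $(\nu_0,1/(M\tau))$, so $\psi$ attains a negative value there, contradicting $\psi\ge0$. This forces $\tau\ge\tau_*$, i.e. the asserted estimate. Finally I would verify that the first alternative itself is consistent with the claim, i.e. $1/(\mu_0 M)\ge\tau_*/L$, which follows from $L(1+\sqrt{MP})^{2}>LMP=LM+\mu_0M>\mu_0M$.

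The main obstacle I anticipate is precisely this bookkeeping around the optimisation rather than any deep difficulty: confirming that the minimising $\mu$ (equivalently the vertex of $\psi$) really lies in the \emph{open} admissible interval, and discarding the spurious larger root $\tau=(1-\sqrt{MP})^{-2}$ of the discriminant equation, which does not correspond to the genuine minimum. By contrast, the algebraic collapse produced by $m\equiv1$ and the factorisation of the discriminant into $(1\pm\sqrt{MP})^{-2}$ are the clean parts of the argument; it is only the interval- and root-location analysis that demands care.
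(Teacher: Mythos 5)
Your proposal is correct, and it reaches the same threshold by the same overall strategy as the paper (specialize \eqref{technical-ineq} using $m\equiv 1$ and \eqref{additional-as-main-abs-res}, then optimize over $\mu$), but the optimization itself is executed quite differently. The paper substitutes $\eta=\mu MT$ and multiplies through by $(1-\mu_0/\mu)=(1-\mu_0MT/\eta)$, whereupon the $\mu_0$-term absorbs into the $1/\eta$-term and the inequality becomes $T\geq 1/G(\eta)$ with $G(\eta)=L/(1-\eta)+C/\eta$, $C=M(L+\mu_0)$ --- crucially a function of $\eta$ alone, so the whole problem reduces to the one-line minimization $G_0=(\sqrt{L}+\sqrt{C})^2$ at $\eta_0=\sqrt{C}/(\sqrt{L}+\sqrt{C})$, followed by the same admissibility check ($\eta_0/MT>\mu_0$ or not) that you perform. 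Your route instead clears denominators to get the quadratic $\psi(\nu)\geq 0$ and identifies the threshold $\tau_*$ as the smaller root of the discriminant equation $\tau^2(1-MP)^2-2\tau(1+MP)+1=0$; I verified that this discriminant and its factorization into $(1\pm\sqrt{MP})^{-2}$ are correct, that $\psi(\nu_0)=1+\tau\nu_0(1-M)$ and $\psi(1/(M\tau))=1/M$ as you claim, and that the ``short check'' you defer --- that the vertex $\nu_v=(1+M\tau P-\tau)/(2M\tau)$ lies in $(\nu_0,1/(M\tau))$ whenever $\tau<\tau_*$ --- does go through (both required inequalities reduce to $(1+\sqrt{MP})^2$ exceeding quantities that are manifestly smaller, namely $MP-1$ and $1+M\nu_0-M$). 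Your endpoint/first-alternative bookkeeping ($\tau_*<1/(M\nu_0)$ and $L(1+\sqrt{MP})^2>\mu_0 M$) is also sound and corresponds to the paper's comparison of $\eta_0/\mu_0M$ with $1/G_0$. In short: same idea, but the paper's change of variables buys a cleaner, essentially computation-free minimization, while your discriminant argument is heavier on algebra yet makes the sharpness of the threshold (the exact value of $\tau$ at which $\psi$ first acquires an interior root) completely explicit.
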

\noindent The proof of Theorem \ref{main-res-abs} and Corollary \ref{cor-1} is presented in Section 2.\\
\indent In Section 3 we shall apply the above setting to the parabolic evolution problem \eqref{parabolic-ev-eq} and provide an estimate for the minimal period of parabolic systems.
\begin{thm} \label{main-res-par}
If $u:[0,T]\to X^{\beta}$, $\beta\in [0,1)$ is a nonstationary periodic solution of \eqref{parabolic-ev-eq} with the period $T>0$, then
\begin{equation}\label{main-res-est}
T \geq  1/(L K_\beta)^{1/(1-\beta)}
\end{equation}
where $K_\beta := \min_{\eta\in(0,1)} H(\eta)$ and $H:(0,1)\to (0,+\infty) $ is given by
\begin{equation}\label{par-H-eta-def}
H(\eta):= \frac{\eta^\beta}{1-\eta} + \frac{M_\beta}{\eta} \left(
e^{-\eta} + \eta^\beta \int_{0}^{\eta} s^{-\beta} e^{-s} \, ds 
\right)
\end{equation}
with $M_\beta= (\beta/e)^{\beta}$ if $\beta\in (0,1)$ and $M_\beta = 1$ if $\beta = 0$.\\
\indent Consequently, one has
\begin{equation}\label{1/2-better-estimate}
T\geq 1/ L^{1/(1-\beta)} \cdot \left(2^{1-\beta} + 2 M_\beta/e^{1/2} + M_\beta/(1-\beta)  \right)^{1/(1-\beta)} \ \ \mbox{ for } \beta\in (0,1),
\end{equation}
and, $$
T\geq 1/4 L   \ \ \mbox{ for } \beta =0. 
$$
\end{thm}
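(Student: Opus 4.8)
The plan is to obtain Theorem~\ref{main-res-par} as an application of the abstract Theorem~\ref{main-res-abs} to the choice $\mathbb{X}:=X$, $\mathbb{V}:=X^{\beta}$, $\mathbb{A}:=-A$ and $\mathbb{F}:=f$. First I would note that $-A$ generates the analytic semigroup $e^{-tA}$ and invoke the standard theory of fractional power spaces (e.g.\ \cite{Henry}) to check $(V_1)$--$(V_3)$: one has $D(A)\subset X^{\beta}\subset X$ with continuous embeddings, $e^{-tA}(X)\subset X^{\beta}$ for every $t>0$, and $\{e^{-tA}|_{X^{\beta}}\}_{t\ge 0}$ is again a $C_0$-semigroup. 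Thus the ambient hypotheses of Theorem~\ref{main-res-abs} are met once a suitable decomposition is produced.

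Next I would build the decomposition from the spectral resolution $E$ of the self-adjoint operator $A$: for $\mu>\mu_0:=0$ set $\mathbb{P}_{\mu}^{+}:=E((0,\mu])$ and $\mathbb{P}_{\mu}^{-}:=E((\mu,+\infty))$, with ranges $\mathbb{X}_{\mu}^{+}$, $\mathbb{X}_{\mu}^{-}$. On $\mathbb{X}_{\mu}^{+}$ the bound $\|Aw\|_{\beta}=\|A^{1+\beta}w\|\le\mu\|A^{\beta}w\|=\mu\|w\|_{\beta}$ gives $(D_{\mu,+})$ with $M=1$. On $\mathbb{X}_{\mu}^{-}$ the functional calculus yields the decay $\|e^{-tA}w\|\le e^{-\mu t}\|w\|$ and $\|e^{-tA}w\|_{\beta}\le e^{-\mu t}\|w\|_{\beta}$, while the smoothing estimate $\|A^{\beta}e^{-tA}\|_{\mathcal{L}(X)}\le M_{\beta}t^{-\beta}$, with $M_{\beta}=\sup_{s>0}s^{\beta}e^{-s}=(\beta/e)^{\beta}$ (and $M_0=1$), supplies the ingredients of $(D_{\mu,-})$ with $m(t):=M_{\beta}t^{-\beta}$; $(D_{\mu,0})$ holds because $\beta<1$ makes $s^{-\beta}$ integrable at the origin. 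Since $\mathbb{P}_{\mu}^{\pm}$ are orthogonal in the Hilbert space $X$, I would read off $K_{\mu}^{-}=\|\mathbb{P}_{\mu}^{-}\|_{\mathcal{L}(X,X)}=1$, and from $\|A^{\beta}E((0,\mu])\|_{\mathcal{L}(X)}=\sup_{\lambda\le\mu}\lambda^{\beta}=\mu^{\beta}$ I would get $K_{\mu}^{+}=\|\mathbb{P}_{\mu}^{+}\|_{\mathcal{L}(X,X^{\beta})}=\mu^{\beta}$.

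With $\mu_0=0$ and $M=1$ the alternative $T\ge 1/\mu_0 M$ in Theorem~\ref{main-res-abs} is vacuous, and the admissible range $\mu\in(0,1/T)$ corresponds exactly to $\eta:=\mu T\in(0,1)$. Substituting $\mu=\eta/T$ into \eqref{technical-ineq}, together with $K_{\mu}^{+}=\mu^{\beta}$, $K_{\mu}^{-}=1$ and $m(t)=M_{\beta}t^{-\beta}$, and changing variables $s=\mu\tau$ so that $\int_{0}^{\mu T}m(s/\mu)e^{-s}\,ds=M_{\beta}\mu^{\beta}\int_{0}^{\eta}s^{-\beta}e^{-s}\,ds$, I would factor out the common $LT^{1-\beta}$ and verify that the bracket collapses exactly to $H(\eta)$ from \eqref{par-H-eta-def}. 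This gives $1\le LT^{1-\beta}H(\eta)$ for every $\eta\in(0,1)$; minimizing the right-hand side over $\eta$ produces $1\le LT^{1-\beta}K_{\beta}$, which rearranges into \eqref{main-res-est}.

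For the explicit estimates I would evaluate $H$ at $\eta=1/2$, obtaining $2^{1-\beta}+2M_{\beta}e^{-1/2}+2^{1-\beta}M_{\beta}\int_{0}^{1/2}s^{-\beta}e^{-s}\,ds$, and then bound $\int_{0}^{1/2}s^{-\beta}e^{-s}\,ds\le\int_{0}^{1/2}s^{-\beta}\,ds=2^{\beta-1}/(1-\beta)$, so that $K_{\beta}\le H(1/2)\le 2^{1-\beta}+2M_{\beta}/e^{1/2}+M_{\beta}/(1-\beta)$, which is \eqref{1/2-better-estimate}. For $\beta=0$ one has $M_0=1$ and $\int_{0}^{\eta}e^{-s}\,ds=1-e^{-\eta}$, so $H(\eta)=1/(1-\eta)+1/\eta$, whose minimum is at $\eta=1/2$ with value $K_0=4$, giving $T\ge 1/4L$. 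The hard part will be the verification of $(D_{\mu,-})$ with the sharp constant $M_{\beta}$: one has to balance the smoothing blow-up $t^{-\beta}$ against the exponential decay $e^{-\mu t}$ on the high-frequency subspace $\mathbb{X}_{\mu}^{-}$ so that the profile $m$ can be chosen independent of $\mu$, and it is precisely this balance that is responsible for the value $(\beta/e)^{\beta}$ entering the bracket and hence for the improvement over \eqref{Robinson-Vidal-Lopez-estimate}.
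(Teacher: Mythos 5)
Your route is exactly the paper's: take $\mathbb{X}=X$, $\mathbb{V}=X^{\beta}$, $\mathbb{A}=-A$, produce the spectral decomposition with $\mu_0=0$, $M=1$, $K_{\mu}^{+}=\mu^{\beta}$, $K_{\mu}^{-}=1$ and $m(t)=M_{\beta}t^{-\beta}$, feed these constants into \eqref{technical-ineq}, set $\eta=\mu T$ to collapse the bracket to $H(\eta)$ from \eqref{par-H-eta-def}, minimize over $\eta$, and evaluate at $\eta=1/2$ for the explicit bounds. All of that algebra, including the change of variables $s=\mu\tau$, the bound $\int_{0}^{1/2}s^{-\beta}e^{-s}\,ds\le 2^{\beta-1}/(1-\beta)$, and the $\beta=0$ computation $H(\eta)=1/(1-\eta)+1/\eta$ with minimum $4$, is correct and coincides with the paper's proof.

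The one genuine gap is the point you yourself label ``the hard part'': the estimate in $(D_{\mu,-})$, namely $\|A^{\beta}e^{-tA}w\|\le M_{\beta}t^{-\beta}e^{-\mu t}\|w\|$ for $w\in\mathbb{X}_{\mu}^{-}$. This does not follow, as your text suggests, by combining the global smoothing bound $\|A^{\beta}e^{-tA}\|_{\mathcal{L}(X)}\le M_{\beta}t^{-\beta}$ with the decay $e^{-\mu t}$ on the high modes. By the functional calculus the quantity to control is $\sup_{\lambda\ge\mu}\lambda^{\beta}e^{-\lambda t}$, which equals $\mu^{\beta}e^{-\mu t}$ when $\mu t\ge\beta$ and $M_{\beta}t^{-\beta}$ when $\mu t<\beta$; in the first regime it exceeds the claimed product $M_{\beta}t^{-\beta}e^{-\mu t}$ as soon as $\mu t>\beta/e$, and in the second it exceeds it as well since $e^{-\mu t}<1$. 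What the spectral calculus gives cleanly is the additive bound $\sup_{\lambda\ge\mu}\lambda^{\beta}e^{-\lambda t}\le\bigl(\mu^{\beta}+M_{\beta}t^{-\beta}\bigr)e^{-\mu t}$, which is not of the form $m(t)e^{-\mu t}$ with a $\mu$-independent profile. So the verification of $(D_{\mu,-})$ with $m(t)=M_{\beta}t^{-\beta}$ is precisely the nontrivial input (it is the content of the Proposition the paper imports from Lemma 3.1 of \cite{Rob-Vid-Lop}), and your proposal asserts it rather than proves it; everything downstream of that assertion is sound.
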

\noindent The above result improves \eqref{Robinson-Vidal-Lopez-estimate} obtained by Robinson and Vidal-Lopez in \cite{Rob-Vid-Lop} (see Remark \ref{Rob-Vid-Lop-comp}).\\
\indent In Section 4 we study the spectral properties of the hyperbolic operator ${\bold A}$ given by \eqref{boldA-formula} and in Section 5 we prove that ${\bold A}$ has the property $(UHBD)$ with constants $\mu_0=2/\alpha$ and $M=1+\sqrt{2}$ (see Corollary \ref{to-obtain-(D)-property}) as well as the inequalities \eqref{additional-as-main-abs-res} hold (see Corollary \ref{to-obtain-(D)-property-1}). Therefore, in view of Corollary  \ref{cor-1}, we get the following minimal period estimate for damped hyperbolic equations, which is the main result of the paper.
\begin{thm} \label{main-res}
If $u:[0,T]\to X$ is a nonstationary periodic solution of \eqref{E25} with the period $T>0$, then
\begin{equation}\label{main-res-est-bis}
T \geq 1/L\left(1+\sqrt{(1+1/\sqrt{2})(1+2/\alpha L)}\right)^2.
\end{equation}
\end{thm}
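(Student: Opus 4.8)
The plan is to deduce Theorem~\ref{main-res} directly from Corollary~\ref{cor-1}, applied to the first-order reformulation \eqref{1st-order} of \eqref{E25}. First I would translate the hypothesis: a nonstationary $T$-periodic solution $u$ of \eqref{E25}, understood in the sense that $u(t)\in X^{1/2}$ and $\dot u(t)\in X$ for every $t$, gives rise to the curve $z:=(u,\dot u)\colon\mathbb{R}\to\mathbf{X}$, $\mathbf{X}=X^{1/2}\times X$, which is a $T$-periodic mild solution of $\dot z=\mathbf{A}z+\mathbf{F}(z)$ with $\mathbf{A}$ and $\mathbf{F}$ as in \eqref{boldA-formula}. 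Since $u$ is nonconstant, its first component already forces $z$ to be nonconstant, so $z$ is a nonstationary periodic solution in the sense required by Corollary~\ref{cor-1}.

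Next I would verify the hypotheses of that corollary. The operator $\mathbf{A}$ generates a $C_0$-semigroup on $\mathbf{X}$ because $-\mathbf{A}$ is sectorial, and the nonlinearity inherits the Lipschitz constant $L$: from \eqref{E101} and the inner product chosen on $\mathbf{X}$ one has $\|\mathbf{F}(z_1)-\mathbf{F}(z_2)\|_{\mathbf{X}}=\|f(u_1,v_1)-f(u_2,v_2)\|\le L(\|u_1-u_2\|_{1/2}^2+\|v_1-v_2\|^2)^{1/2}=L\|z_1-z_2\|_{\mathbf{X}}$. The essential simplification in the hyperbolic case is that one takes $\mathbb{V}=\mathbb{X}=\mathbf{X}$: then $(V_1)$--$(V_3)$ hold trivially, the smoothing function may be chosen as $m\equiv1$ (so the integral term in \eqref{technical-ineq} collapses), and the remaining structural requirements --- property $(UHBD)$ with $\mu_0=2/\alpha$ together with the projection bounds \eqref{additional-as-main-abs-res} --- are exactly the conclusions of Corollaries~\ref{to-obtain-(D)-property} and \ref{to-obtain-(D)-property-1}.

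With every hypothesis of Corollary~\ref{cor-1} in force, its conclusion $T\ge 1/L(1+\sqrt{M(1+\mu_0/L)})^2$ applies, and substituting $\mu_0=2/\alpha$ together with the constant $M$ furnished in Section~5 yields precisely \eqref{main-res-est-bis}. Thus the proof of Theorem~\ref{main-res} is, at this stage, only a matter of matching the notion of solution and performing this substitution; the genuine work lies in the deferred Sections~4--5. I expect the main obstacle to be the verification of $(UHBD)$ and, above all, of the sharp projection bounds \eqref{additional-as-main-abs-res} for the \emph{non-self-adjoint} operator $\mathbf{A}$, which cannot be diagonalized orthogonally. The natural route is a block decomposition over the eigenspaces of $A$, on which $\mathbf{A}$ reduces to the $2\times2$ matrices with characteristic roots $\tfrac12(-\alpha\lambda_n\pm\sqrt{\alpha^2\lambda_n^2-4\lambda_n})$ --- complex when $\alpha^2\lambda_n<4$ and real otherwise --- from which the threshold $\mu_0=2/\alpha$ and the constant $M$ emerge; the delicate point is to control the norms of the resulting oblique spectral projections uniformly in $\mu$.
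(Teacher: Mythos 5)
Your proposal follows exactly the paper's route: Theorem~\ref{main-res} is obtained by applying Corollary~\ref{cor-1} to the first-order system \eqref{1st-order} with $\mathbb{V}=\mathbb{X}=\mathbf{X}$, $m\equiv 1$, $\mu_0=2/\alpha$, the bound $M$ from Proposition~\ref{to-obtain-(D)-property} and the projection estimates \eqref{additional-as-main-abs-res} from Proposition~\ref{to-obtain-(D)-property-1}, all established by precisely the blockwise spectral analysis of the $2\times 2$ restrictions of $\mathbf{A}$ with roots $\xi_{\pm}(\lambda_n)$ that you describe. The only caveat is one internal to the paper rather than to your argument: Section~5 furnishes $M=1+\sqrt{2}$, whose substitution into Corollary~\ref{cor-1} gives the factor $(1+\sqrt{2})$ under the square root, not the factor $(1+1/\sqrt{2})$ printed in \eqref{main-res-est-bis}, so the claim that the substitution yields ``precisely'' \eqref{main-res-est-bis} should be checked against that discrepancy.
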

\noindent We shall illustrate the result in case of a damped beam equation in Section 6. 

\section{General period estimate -- proof of Theorem \ref{main-res-abs}}
Assume that the operator $\mathbb{A}$ and $\mathbb{F}:\mathbb{V}\to \mathbb{X}$ is as in Theorem \ref{main-res-abs}. Suppose that $z:\mathbb{R} \to \mathbb{V}$ is a mild solution of 
$$
\dot z = \mathbb{A}z+\mathbb{F}(z) \  \mbox{ on } \ \mathbb{R}
$$
with minimal period $T>0$. In particular, $z(t)=z(t+T)$, for all $t\in\mathbb{R}$, and one has the Duhamel formula
\begin{equation}\label{abstract-Duhamel}
z(t)=e^{ (t-t_0) \mathbb{A} } z(t_0) + \int_{t_0}^{t} e^{(t-s)\mathbb{A}} \mathbb{F} ( z(s) ) \, d s
\end{equation}
for any $t_0\in\mathbb{R}$ and all $t>t_0$. This yields, for any $t\in\mathbb{R}$,
$$
z(t)=z(t+T)=e^{T\mathbb{A}}z(t)+\int_{t}^{t+T}e^{(t+T-s)\mathbb{A}}\mathbb{F}(z(s))\, ds,
$$
which, after change of variables in the integral, yields
\begin{equation}\label{P_plus-eq}
(I-e^{T\mathbb{A}})z(t)=\int_{0}^{T} e^{(T-s)\mathbb{A}}\mathbb{F}(z(t+s))\, ds.
\end{equation}
Let us take $\mu>\mu_0$ such that
\begin{equation}
\mu M T < 1    
\end{equation}
and use property $(UHBD)$ to obtain the decomposition
$$
\mathbb{X} = \mathbb{X}_{\mu}^{-} \oplus \mathbb{X}_{\mu}^{+}.
$$
\begin{uw}  \label{D-remark}
By $(D_{\mu,+})$ and $(D_{\mu,-})$, one has $\mathbb{A}\mathbb{P}_{\mu}^{+}w = \mathbb{P}_{\mu}^{+} \mathbb{A}w$ for all $w\in D(\mathbb{A})$ and $\mathbb{A}\mathbb{P}_{\mu}^{-}w = \mathbb{P}_{\mu}^{-} \mathbb{A}w$ for all $w\in D(\mathbb{A})\cap {\mathbb{X}}_{\mu}^{-}$.
\end{uw}
We shall also need the following estimates.
\begin{lm}\label{V-trick-lemma}
Under the above assumptions, for any $\mu>\mu_0$ and $T>0$,  define $R_{\mu,T}: \mathbb{X}_{\mu}^{-}\cap \mathbb{V} \to \mathbb{X}_{\mu}^{-}\cap \mathbb{V}$ by
$R_{\mu,T} u:=u-e^{T \mathbb{A}}u$, $u\in \mathbb{X}_{\mu}^{-}\cap \mathbb{V}$.
Then
$$
\| R_{\mu,T}^{-1} w\|_{\mathbb{V}} \leq (1-e^{-\mu T})^{-1} \| w \|_{\mathbb{V}} 
\mbox{ for all } w\in \mathbb{X}_{\mu}^{-} \cap \mathbb{V}, \leqno{(i)}
$$
$$
\|R_{\mu,T}^{-1} w - w \|_{\mathbb{V}} \leq e^{-\mu T} (1-e^{-\mu T})^{-1} m(T) \|w\|_{\mathbb{X}} \mbox{ for all } w\in \mathbb{X}_{\mu}^{-}\cap\mathbb{V}. \leqno{(ii)}
$$
\end{lm}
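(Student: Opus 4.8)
The plan is to recognize $R_{\mu,T} = I - e^{T\mathbb{A}}$ as a Neumann-invertible perturbation of the identity on the subspace $\mathbb{X}_{\mu}^{-}\cap \mathbb{V}$, and then to exploit the three distinct bounds packaged in $(D_{\mu,-})$ — the pure $\mathbb{X}$-decay, the $\mathbb{V}$-contraction, and the mixed $\mathbb{X}\to\mathbb{V}$ smoothing — each precisely at the step where it is sharpest. First I would check that $R_{\mu,T}$ is well defined and that $(\mathbb{X}_{\mu}^{-}\cap \mathbb{V}, \|\cdot\|_{\mathbb{V}})$ is a Banach space. The latter holds because $\mathbb{X}_{\mu}^{-}$ is closed in $\mathbb{X}$ and, by $(V_2)$, $\mathbb{V}$ embeds continuously into $\mathbb{X}$; hence $\mathbb{V}$-convergence forces $\mathbb{X}$-convergence and keeps the limit in $\mathbb{X}_{\mu}^{-}$, so $\mathbb{X}_{\mu}^{-}\cap\mathbb{V}$ is a closed subspace of $\mathbb{V}$. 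Invariance $e^{t\mathbb{A}}(\mathbb{X}_{\mu}^{-})\subset \mathbb{X}_{\mu}^{-}$ follows from the commutation $\mathbb{P}_{\mu}^{-} e^{t\mathbb{A}} = e^{t\mathbb{A}}\mathbb{P}_{\mu}^{-}$ (Remark \ref{D-remark} together with the standard fact that a projection commuting with the generator commutes with its semigroup), while $e^{t\mathbb{A}}(\mathbb{V})\subset \mathbb{V}$ is $(V_3)$; thus $R_{\mu,T}$ indeed maps $\mathbb{X}_{\mu}^{-}\cap\mathbb{V}$ into itself.

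For part $(i)$ I would use the $\mathbb{V}$-contraction estimate of $(D_{\mu,-})$, which gives $\|e^{T\mathbb{A}}\|_{\mathcal{L}(\mathbb{X}_{\mu}^{-}\cap\mathbb{V})}\leq e^{-\mu T}<1$ for $\mu>\mu_0$. Since $(e^{T\mathbb{A}})^{n}=e^{nT\mathbb{A}}$ has $\mathbb{V}$-operator norm at most $e^{-\mu n T}$, the Neumann series $R_{\mu,T}^{-1}=\sum_{n\geq 0} e^{nT\mathbb{A}}$ converges in operator norm and produces $\|R_{\mu,T}^{-1}\|_{\mathcal{L}(\mathbb{V})}\leq \sum_{n\geq 0} e^{-\mu n T}=(1-e^{-\mu T})^{-1}$, which is exactly the claimed bound.

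For part $(ii)$ the key algebraic observation is that $R_{\mu,T}^{-1} w - w = \sum_{n\geq 1} e^{nT\mathbb{A}} w = e^{T\mathbb{A}} R_{\mu,T}^{-1} w$. I would then estimate the $\mathbb{V}$-norm of the right-hand side by applying the mixed smoothing estimate $\|e^{T\mathbb{A}} u\|_{\mathbb{V}}\leq m(T) e^{-\mu T}\|u\|_{\mathbb{X}}$ (valid for $u\in\mathbb{X}_{\mu}^{-}$) to $u=R_{\mu,T}^{-1}w\in \mathbb{X}_{\mu}^{-}\cap\mathbb{V}$, and finally control $\|R_{\mu,T}^{-1}w\|_{\mathbb{X}}$ through the \emph{pure} $\mathbb{X}$-decay estimate: since the series converges in $\mathbb{V}$, hence in $\mathbb{X}$, one has $\|R_{\mu,T}^{-1}w\|_{\mathbb{X}}\leq \sum_{n\geq 0} e^{-\mu n T}\|w\|_{\mathbb{X}}=(1-e^{-\mu T})^{-1}\|w\|_{\mathbb{X}}$. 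Multiplying the two factors yields $e^{-\mu T}(1-e^{-\mu T})^{-1}m(T)\|w\|_{\mathbb{X}}$, as asserted.

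The main obstacle — really the only delicate point — is the bookkeeping of which norm to use at which step. One must resist estimating $R_{\mu,T}^{-1}w$ in the stronger $\mathbb{V}$-norm before invoking the smoothing bound: it is precisely by first measuring $R_{\mu,T}^{-1}w$ in the weaker $\mathbb{X}$-norm and then paying a single application of the $\mathbb{X}\to\mathbb{V}$ smoothing that the factor $m(T)$ enters only once, rather than being absorbed term-by-term into the series (which would spoil the clean bound in $(ii)$). One should also confirm that the $\mathbb{X}$-valued and $\mathbb{V}$-valued Neumann series represent the same element, which is immediate because both converge on $\mathbb{X}_{\mu}^{-}\cap\mathbb{V}$ and $\mathbb{V}\hookrightarrow\mathbb{X}$ continuously.
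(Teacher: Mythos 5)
Your proof is correct and follows essentially the same route as the paper's: a Neumann series argument on the closed subspace $\mathbb{X}_{\mu}^{-}\cap\mathbb{V}$ for (i), and for (ii) the identification of $R_{\mu,T}^{-1}w - w$ with the tail of that series combined with a single application of the $\mathbb{X}\to\mathbb{V}$ smoothing bound from $(D_{\mu,-})$. The only (immaterial) difference is that you write the tail as $e^{T\mathbb{A}}R_{\mu,T}^{-1}w$ and bound $R_{\mu,T}^{-1}$ in the $\mathbb{X}$-norm before smoothing, whereas the paper writes it as $R_{\mu,T}^{-1}e^{T\mathbb{A}}w$, smooths first, and then invokes part (i); both orders yield the identical constant $e^{-\mu T}(1-e^{-\mu T})^{-1}m(T)$.
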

\begin{proof} 
(i) By $(D_{\mu,-})$  we have $e^{T\mathbb{A}} \mathbb{X}_{\mu}^{-} \subset \mathbb{X}_{\mu}^{-}$
and 
\begin{equation}\label{semigroup-norm<1}
\|e^{T\mathbb{A}}\|_{{\cal L}(\mathbb{X}_{\mu}^{-} \cap \mathbb{V},\mathbb{X}_{\mu}^{-}\cap \mathbb{V})}\leq e^{-\mu T}<1, 
\end{equation}
where in the closed subspace 
$\mathbb{X}_{\mu}^{-}\cap \mathbb{V}$ of $\mathbb{V}$ we consider the norm from $\mathbb{V}$. Hence, we can infer that the operator $R_{\mu, T}$ is invertible and 
\begin{equation}\label{plus_semigroup_estimate}
\|R_{\mu,T}^{-1}\|_{{\cal L}(\mathbb{X}_{\mu}^{-}\cap \mathbb{V},\mathbb{X}_{\mu}^{-}\cap \mathbb{V})} \leq (1-\|e^{T\mathbb{A}}\|_{{\cal L}(\mathbb{X}_{\mu}^{-}\cap \mathbb{V},\mathbb{X}_{\mu}^{-}\cap \mathbb{V})})^{-1}
\leq  (1-e^{-\mu T})^{-1}.
\end{equation}
\indent (ii) According to (i) and $(D_{\mu,-})$, for any $w\in \mathbb{X}_{\mu}^{-} \cap \mathbb{V}$, we get
\begin{eqnarray*}
\| R_{\mu,T}^{-1} w - w\|_{\mathbb{V}} & = & \left\|\sum_{k=1}^{\infty} e^{kT\mathbb{A}}w \right\|_{\mathbb{V}}  =   \left\| \sum_{k=0} e^{kT\mathbb{A}} e^{T\mathbb{A}}  w \right\|_{\mathbb{V}}\\
& = & \left\| R_{\mu,T}^{-1} e^{T\mathbb{A}}  w\right\|_{\mathbb{V}} \leq 
 (1-e^{-\mu T})^{-1} \left\|e^{T\mathbb{A}}  w\right\|_{\mathbb{V}}\\ 
& \leq & (1-e^{-\mu T})^{-1} e^{-\mu T} m(T)\|w\|_{\mathbb{X}},
\end{eqnarray*}
which completes the proof.
\end{proof}

Now choose $\tau\in (0,T)$ and consider $D:[0,+\infty)\to \mathbb{X}$ given by 
$$
D(t):=z(t+\tau)-z(t), \ t\geq 0.
$$
Clearly, $D$ is a nonzero function as $T$ is the minimal period of $z$ and we have the following estimates.
\begin{lm}\label{D_fun-lemma}
Under the above assumptions if $\mu_0 < 1/MT$, then, for any $\mu \in (\mu_0, 1/MT)$, 
$$
\|\mathbb{P}_{\mu}^{+}D\|_{L^\infty (0,T;\mathbb{V})} \leq 
T L K_{\mu}^{+} (1-\mu M T)^{-1} \|D\|_{L^\infty (0,T;\mathbb{V})}, \leqno{(i)}
$$
$$
\|\mathbb{P}_{\mu}^{-}D\|_{L^\infty (0,T;\mathbb{V})}\leq
 K_{\mu}^{-} L \left(\mu^{-1}e^{-\mu T}m(T)  + \int_{0}^{T} m(s)\cdot e^{-\mu s} \, ds \right) \|D\|_{L^\infty (0,T;\mathbb{V})}. \leqno{(ii)}
$$
\end{lm}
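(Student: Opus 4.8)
The plan is to treat both parts through a single integral identity for the difference $D(t)=z(t+\tau)-z(t)$, and then to play off the opposite behaviour of the semigroup on the two summands of $\mathbb X=\mathbb X_\mu^-\oplus\mathbb X_\mu^+$. First I would subtract the Duhamel formula \eqref{abstract-Duhamel} written at $t+\tau$ and at $t$ (shifting the integration variable by $\tau$) to get
\[
D(t)=e^{(t-t_0)\mathbb A}D(t_0)+\int_{t_0}^{t}e^{(t-s)\mathbb A}G(s)\,ds,\qquad G(s):=\mathbb F(z(s+\tau))-\mathbb F(z(s)).
\]
Since $z$, hence $D$, is $T$-periodic, running this over one period gives, exactly as in \eqref{P_plus-eq}, the identity $(I-e^{T\mathbb A})D(t)=\int_{0}^{T}e^{(T-s)\mathbb A}G(t+s)\,ds$. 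Two facts will be used throughout: the Lipschitz bound $\|G(s)\|_{\mathbb X}\le L\|D(s)\|_{\mathbb V}\le L\|D\|_{L^\infty(0,T;\mathbb V)}$, and the zero-mean identity $\int_0^T G(t+s)\,ds=0$, which holds because shifting the $T$-periodic map $\mathbb F(z(\cdot))$ by $\tau$ leaves its integral over a full period unchanged. Applying the commuting projections $\mathbb P_\mu^\pm$ (Remark \ref{D-remark}) produces $(I-e^{T\mathbb A})\mathbb P_\mu^\pm D(t)=\int_0^T e^{(T-s)\mathbb A}\mathbb P_\mu^\pm G(t+s)\,ds$, with the forcing still of zero mean.

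For (ii) I would work on $\mathbb X_\mu^-\cap\mathbb V$, where $I-e^{T\mathbb A}$ is exactly the operator $R_{\mu,T}$ of Lemma \ref{V-trick-lemma}, so $\mathbb P_\mu^- D(t)=R_{\mu,T}^{-1}\Xi(t)$ with $\Xi(t):=\int_0^T e^{(T-s)\mathbb A}\mathbb P_\mu^- G(t+s)\,ds$. Splitting $R_{\mu,T}^{-1}=I+(R_{\mu,T}^{-1}-I)$, I would estimate the identity part directly in $\mathbb V$ via $\|e^{(T-s)\mathbb A}w\|_\mathbb V\le m(T-s)e^{-\mu(T-s)}\|w\|_\mathbb X$ from $(D_{\mu,-})$, which after $s\mapsto T-s$ yields the term $K_\mu^- L\int_0^T m(s)e^{-\mu s}\,ds$; the remaining part is handled by Lemma \ref{V-trick-lemma}(ii), estimating $\|\Xi(t)\|_\mathbb X$ through $\|e^{(T-s)\mathbb A}w\|_\mathbb X\le e^{-\mu(T-s)}\|w\|_\mathbb X$, which gives a factor $(1-e^{-\mu T})/\mu$; the two $(1-e^{-\mu T})$ factors cancel and leave $\mu^{-1}e^{-\mu T}m(T)$. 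With $\|\mathbb P_\mu^- G\|_\mathbb X\le K_\mu^- L\|D\|_{L^\infty(0,T;\mathbb V)}$ this assembles into the stated bound; this half is essentially bookkeeping once Lemma \ref{V-trick-lemma} is available.

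For (i) the semigroup is expanding, so the stable estimates are useless and a different device is needed. On $\mathbb X_\mu^+$ the operator $\mathbb A$ is bounded in the $\mathbb V$-norm by $\mu M$ (property $(D_{\mu,+})$), whence $\|e^{t\mathbb A}\|_{{\cal L}(\mathbb V)}\le e^{\mu Mt}$ and $I-e^{T\mathbb A}=-\mathbb A\int_0^T e^{\rho\mathbb A}\,d\rho=-\mathbb A B$. The crucial step is to exploit the zero mean: with $H(t,s):=\int_0^s\mathbb P_\mu^+ G(t+\sigma)\,d\sigma$ one has $H(t,0)=H(t,T)=0$, so an integration by parts rewrites the forcing as $\int_0^T e^{(T-s)\mathbb A}\mathbb P_\mu^+ G(t+s)\,ds=\mathbb A\int_0^T e^{(T-s)\mathbb A}H(t,s)\,ds$. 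Comparing with $I-e^{T\mathbb A}=-\mathbb A B$ cancels the factor $\mathbb A$ and gives the closed formula $\mathbb P_\mu^+ D(t)=-B^{-1}\int_0^T e^{(T-s)\mathbb A}H(t,s)\,ds$. It then remains to bound $\|B^{-1}\|_{{\cal L}(\mathbb V)}\le T^{-1}\bigl(2-\Phi(\mu MT)\bigr)^{-1}$ with $\Phi(x)=(e^x-1)/x$, to use $\|H(t,s)\|_\mathbb V\le s\,K_\mu^+ L\|D\|_{L^\infty(0,T;\mathbb V)}$ together with $\int_0^T s\,e^{\mu M(T-s)}\,ds=(e^{\mu MT}-1-\mu MT)/(\mu M)^2$, and finally to collect constants; the claimed factor $(1-\mu MT)^{-1}$ drops out of the elementary inequality $e^{x}\le 1+x+x^2$ for $x=\mu MT\in(0,1)$.

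I expect part (i) to be the main obstacle. A naive estimate of the periodic solution on the expanding subspace blows up like $1/|\lambda|$ for modes with small $\mathbb A$-eigenvalue $\lambda$ and would not even stay bounded as such modes accumulate near $0$; the benign factor $T$ in the target can appear only through a zero-mean cancellation, and it is precisely the integration by parts against the primitive $H$ that supplies it. A secondary technical point is the invertibility of $I-e^{T\mathbb A}$ (equivalently of $B$) on $\mathbb X_\mu^+$: the hypothesis $\mu MT<1$ confines $\sigma(T\mathbb A|_{\mathbb X_\mu^+})$ to the open unit disc, so invertibility follows provided $0\notin\sigma(\mathbb A|_{\mathbb X_\mu^+})$, which is the case in the parabolic and hyperbolic applications. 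By contrast, part (ii) is routine, resting entirely on Lemma \ref{V-trick-lemma} and the decay estimates in $(D_{\mu,-})$.
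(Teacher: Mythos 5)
Your part (ii) is essentially the paper's own argument: the same splitting $R_{\mu,T}^{-1}=I+(R_{\mu,T}^{-1}-I)$, with the identity piece estimated directly in $\mathbb{V}$ via $(D_{\mu,-})$ and the remainder via Lemma \ref{V-trick-lemma}(ii) measured in $\mathbb{X}$; the two $(1-e^{-\mu T})$ factors cancel exactly as you say. The only thing you skip is the short verification, using $(D_{\mu,0})$ and the Lipschitz bound, that $\Xi(t)$ converges in $\mathbb{V}$ and lies in $\mathbb{X}_{\mu}^{-}\cap\mathbb{V}$, which the paper records before applying the lemma.

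Part (i) is a genuinely different route, and it has one real gap at the stated level of generality. The paper never inverts $I-e^{T\mathbb{A}}$ on $\mathbb{X}_{\mu}^{+}$: it differentiates the Duhamel formula (legitimate because $\mathbb{A}$ is bounded on $\mathbb{X}_{\mu}^{+}$) to get $(\mathbb{P}_{\mu}^{+}D)'=\mathbb{A}\mathbb{P}_{\mu}^{+}D+\mathbb{P}_{\mu}^{+}w$, uses the zero mean of $\mathbb{P}_{\mu}^{+}D$ \emph{itself} over a period to write $\mathbb{P}_{\mu}^{+}D(t)=\frac{1}{T}\int_{0}^{T}\int_{r}^{t}(\mathbb{P}_{\mu}^{+}D)'(s)\,ds\,dr$, and then absorbs the term $\mu MT\|\mathbb{P}_{\mu}^{+}D\|_{L^\infty}$. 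You instead factor $I-e^{T\mathbb{A}}=-\mathbb{A}B$ and cancel $\mathbb{A}$ after integrating by parts against the primitive of the zero-mean forcing. That cancellation needs $\mathbb{A}$ to be injective on $\mathbb{X}_{\mu}^{+}$, which is \emph{not} a consequence of $(D_{\mu,+})$--$(D_{\mu,-})$: nothing forbids $\mathbb{A}w=0$ for some $0\neq w\in\mathbb{X}_{\mu}^{+}$. In the extreme instance $\mathbb{X}=\mathbb{V}$, $\mathbb{A}=0$, $\mathbb{X}_{\mu}^{+}=\mathbb{X}$, $\mathbb{X}_{\mu}^{-}=\{0\}$ (the ODE case, which property $(UHBD)$ admits with $\mu_0=0$), your starting identity reads $0=0$ and determines nothing about $\mathbb{P}_{\mu}^{+}D$, whereas the paper's argument reduces to the classical Yorke/Busenberg computation. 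You flag this but wave it off as holding ``in the applications''; since the lemma feeds the abstract Theorem \ref{main-res-abs}, that is not sufficient. The rest of your (i) does check out: $B$ is invertible from $\mu MT<1$ alone with $\|B^{-1}\|\leq T^{-1}(2-\Phi(\mu MT))^{-1}$, and your constants collapse to $(1-\mu MT)^{-1}$ via $e^{x}\leq 1+x+x^{2}$ on $(0,1)$. To close the gap you would have to bring in the zero mean of $\mathbb{P}_{\mu}^{+}D$ (not merely of the forcing) to control the $\mathrm{Ker}\,\mathbb{A}$ component --- at which point you have essentially rebuilt the paper's proof.
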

\begin{proof} Observe that, in view of \eqref{abstract-Duhamel},
\begin{align*}
D(t) = z(t+\tau)-z(t)= e^{t\mathbb{A}}D(0)+\int_{0}^{t}e^{(t-s)\mathbb{A}} [\mathbb{F}(z(s+\tau))-\mathbb{F}(z(s))]\, ds  \ \ \mbox{ for all } t\geq 0. 
\end{align*}
Acting with $\mathbb{P}_{\mu}^{+}$ on both sides, one gets
$$
\mathbb{P}_{\mu}^{+}D(t) = e^{t\mathbb{A}}\mathbb{P}_{\mu}^{+}D(0) 
+ \int_{0}^{t}e^{(t-s)\mathbb{A}} \mathbb{P}_{\mu}^{+}w(s)\, ds
$$
with $w(s):=\mathbb{F}(z(s+\tau))-\mathbb{F}(z(s))$. Since $\mathbb{A}$ is bounded on $\mathbb{X}_{\mu}^{+}$ we get
\begin{equation}\label{der-p-min-D-1}
(\mathbb{P}_{\mu}^{+}D)'(t)= \mathbb{A} \mathbb{P}_{\mu}^{+}D(t)+ \mathbb{P}_{\mu}^{+}w(t), \ \mbox{ for any } t\geq 0.
\end{equation}
On the other hand
$$
\int_{0}^{T} \mathbb{P}_{\mu}^{+}D(s) \, ds =  \mathbb{P}_{\mu}^{+} \left( 
\int_{0}^{T} z(s+\tau) \, ds - \int_{0}^{T} z(s) \, ds\right) = 0,
$$
which implies
\begin{align*}
\mathbb{P}_{\mu}^{+} D(t) & =  \frac{1}{T}
\int_{0}^{T} \mathbb{P}_{\mu}^{+} D(t)\, dr  =
 \frac{1}{T}\int_{0}^{T} \left( 
\mathbb{P}_{\mu}^{+} D(r) + \int_{r}^{t} (\mathbb{P}_{\mu}^{+} D)'(s)\,  ds\right)\, dr\\
 & = \frac{1}{T}\int_{0}^{T} \left( \int_{r}^{t} (\mathbb{P}_{\mu}^{+} D)'(s)\,  ds\right)\, dr.
\end{align*}
Observe that $\mathbb{X}_{\mu}^{+}\subset D(\mathbb{A})\subset \mathbb{V}$, therefore 
\begin{equation}\label{der-p-min-D-2}
\|\mathbb{P}_{\mu}^{+}D(t)\|_{\mathbb{V}} \leq \int_{0}^{t} \|(\mathbb{P}_{\mu}^{+} D)'(s)\|_{\mathbb{V}}\,  ds.
\end{equation}
\indent By use of \eqref{der-p-min-D-1}, \eqref{der-p-min-D-2} and $(D_{\mu,+})$, we obtain, for all $t\in [0,T]$,
\begin{align*}
\|\mathbb{P}_{\mu}^{+}D(t)\|_{\mathbb{V}}  & \leq  \int_{0}^{t} \|\mathbb{A}\mathbb{P}_{\mu}^{+}D(s) \|_{\mathbb{V}}\, ds + 
\int_{0}^{t} \|\mathbb{P}_{\mu}^{+}w(s) \|_{\mathbb{V}}\, ds\\
& \leq \mu  M  \int_{0}^{T} \|\mathbb{P}_{\mu}^{+} D(s) \|_{\mathbb{V}}\, ds + K_{\mu}^{+ }L\int_{0}^{T} \|D(s)\|_{\mathbb{V}}\, ds \\
& \leq \mu M T \|\mathbb{P}_{\mu}^{+}D\|_{L^\infty (0,T;\mathbb{V})} + T L 
K_{\mu}^{+}\|D\|_{L^\infty (0,T;\mathbb{V})} 
\end{align*}
and, in consequence,
$$
\|\mathbb{P}_{\mu}^{+}D\|_{L^\infty (0,T;\mathbb{V})} \leq T L K_{\mu}^{+} (1-\mu M T)^{-1}\|D\|_{L^\infty (0,T;\mathbb{V})},
$$
which proves (i).\\
\indent In order to show (ii), we use \eqref{P_plus-eq} and the invariance of $\mathbb{X}_{\mu}^{-}$ with respect to $e^{T\mathbb{A}}$ to get
$$
\mathbb{P}_{\mu}^{-}D(t) = (I-e^{T\mathbb{A}})^{-1} \int_{0}^{T} e^{(T-s)\mathbb{A}}\, \mathbb{P}_{\mu}^{-}w(t+s)\, ds.
$$
In view of $(D_{\mu,-})$ and $(D_{\mu,0})$, the integral 
$$
\int_{0}^{T} e^{(T-s)\mathbb{A}}\, \mathbb{P}_{\mu}^{-}w(t+s)\, ds
$$
which, by definition, is an element of the space $\mathbb{X}_{\mu}^{-}$, is also convergent in the space $\mathbb{V}$, since, by use of $(D_{\mu,-})$ and the Lipschitz property of $\mathbb{F}$, we get
\begin{align} \nonumber
\int_{0}^{T} \| e^{(T-s)\mathbb{A}} \mathbb{P}_{\mu}^{-}w(t+s)\|_{\mathbb{V}}\, ds
& \leq  \int_{0}^{T} m(T-s)e^{-\mu (T-s)} \|\mathbb{P}_{\mu}^{-} [\mathbb{F}(z(t+s+\tau))-\mathbb{F}(z(t+s))]\|_{\mathbb{X}}\,  ds \\
&  \leq  K_{\mu}^{-} L \int_{0}^{T} m(T-s) e^{-\mu(T-s)}\, \|D(t+s)\|_{\mathbb{V}}\, ds \nonumber \\
 & \leq K_{\mu}^{-}  L \left(\int_{0}^{T} m(s) e^{-\mu s} \, ds\right) \|D\|_{L^\infty (0,T;\mathbb{V})} < \infty. \nonumber 
\end{align}
Hence, by Lemma \ref{V-trick-lemma} (ii) and $(D_{\mu,-})$,
\begin{align} \nonumber
\| \mathbb{P}_{\mu}^{-}D(t)\|_{\mathbb{V}}
& \leq \left\|(R_{\mu,T}^{-1} - I) \int_{0}^{T} e^{(T-s)\mathbb{A}} \mathbb{P}_{\mu}^{-}w(t+s)\, ds \right\|_{\mathbb{V}} +\left\| \int_{0}^{T} e^{(T-s)\mathbb{A}} \mathbb{P}_{\mu}^{-}w(t+s)\, ds \right\|_{\mathbb{V}} \\
& \leq e^{-\mu T}(1-e^{-\mu T})^{-1} m(T) \left\| \int_{0}^{T} e^{(T-s)\mathbb{A}} \mathbb{P}_{\mu}^{-}w(t+s)\, ds \right\|_{\mathbb{X}} + \nonumber \\ 
&  \ \ \ \ +  K_{\mu}^{-}  L  \left(\int_{0}^{T} m(s) e^{-\mu s} \, ds\right)
\|D\|_{L^\infty (0,T;\mathbb{V})} \nonumber \\
& \leq e^{-\mu T}(1-e^{-\mu T})^{-1} m(T) K_\mu^{-} L 
\left(  \int_{0}^{T} e^{-\mu (T-s)} \, ds \right) \|D\|_{L^\infty (0,T;\mathbb{V})}  + \nonumber \\ 
&  \ \ \ \ +  K_{\mu}^{-}  L \left( \int_{0}^{T} m(s) e^{-\mu s} \, ds \right) 
\|D\|_{L^\infty (0,T;\mathbb{V})} \nonumber \\
& \leq K_{\mu}^{-} L \left(\mu^{-1}e^{-\mu T}m(T)  + \int_{0}^{T} m(s)\cdot e^{-\mu s} \, ds \right) \|D\|_{L^\infty (0,T;\mathbb{V})}, \nonumber  
\end{align}
which ends the proof of  (ii). \end{proof}

\begin{proof}[Proof of Theorem \ref{main-res-abs}] 
Since
$$
\|D\|_{L^\infty (0,T;\mathbb{V})} \leq \|\mathbb{P}_{\mu}^{+}D\|_{L^\infty (0,T;\mathbb{V})} + 
\|\mathbb{P}_{\mu}^{-}D\|_{L^\infty (0,T;\mathbb{V})}
$$
we see that, by use of Lemma \ref{D_fun-lemma},
$$
\|D\|_{L^\infty (0,T;\mathbb{V}) } \leq T L 
\cdot \left[ \frac{K_{\mu}^{+}}{1-\mu M T} +  
\frac{K_{\mu}^{-}}{\mu T} \left(e^{-\mu T}m(T)  + \int_{0}^{\mu T} m(s/\mu)\cdot e^{- s} \, ds \right) \right] \|D\|_{L^\infty (0,T;\mathbb{V})}
$$
Since $\|D\|_{L^{\infty}(0,T;\mathbb{X})}\neq 0$, we obtain the assertion \eqref{technical-ineq}. 
\end{proof}

\begin{uw}\label{alternative-procedure} 
In the proof of Lemma \ref{D_fun-lemma}, another possibility is to use \ref{V-trick-lemma} (i) to get
\begin{align} \nonumber
\|\mathbb{P}_{\mu}^{-}D(t)\|_{\mathbb{V}}
& \leq (1-e^{-\mu T})^{-1} \left\| \int_{0}^{T} e^{(T-s)\mathbb{A}} \mathbb{P}_{\mu}^{-}w(t+s)\, ds \right\|_{\mathbb{V}} \nonumber \\
& \leq (1-e^{-\mu T})^{-1} K_{\mu}^{-}  L \left(\int_{0}^{T} m(s)\cdot e^{-\mu s} \, ds\right) \|D\|_{L^\infty (0,T;\mathbb{V})}, \nonumber 
\end{align}
which is an alternative for the inequality (ii) in Lemma \ref{D_fun-lemma}. Following the proof of Theorem \ref{main-res-abs} we get the following version of its assertion:  if $z:\mathbb{R}\to \mathbb{V}$ is  a nonstationary $T$-periodic mild solution  of 
\eqref{abstract-ev-eq}, then either $T\geq 1/\mu_0 M$ or, for all $\mu\in(\mu_0, 1/MT)$, 
\begin{equation}\label{technical-ineq-alternative}
1 \leq   
T L  \cdot \left( \frac{K_{\mu}^{+}}{1-\mu M T} +   \frac{K_{\mu}^{-}}{ \mu  T(1-e^{-\mu T})} \int_{0}^{\mu T} m(s/\mu)\cdot e^{-s} ds\right).
\end{equation}
This means that, if only we knew that 
\begin{equation*}
\frac{1}{(1-e^{-\mu T})} \int_{0}^{\mu T} m(s/\mu)\cdot e^{-s} ds \geq e^{-\mu T}m(T)  + \int_{0}^{\mu T} m(s/\mu)\cdot e^{- s} \, ds,
\end{equation*}
i.e. equivalently
\begin{equation}\label{comparison-ineq}
\int_{0}^{T} m(\tau) \cdot e^{- \mu \tau} \, d\tau \geq m(T)\cdot (1-e^{-\mu T})/\mu.
\end{equation}
then the estimate \eqref{technical-ineq} implies \eqref{technical-ineq-alternative}. Observe that the \eqref{comparison-ineq} holds if $m$ is decreasing, e.g. in the parabolic case when $m(\eta)=M_\beta \eta^{-\beta}$ with $\beta\in (0,1)$. This will imply that our estimates provide stronger results for parabolic problems than those obtained in \cite{Rob-Vid-Lop} --  see Remark \ref{Rob-Vid-Lop-comp}. In another interesting case, with $m$ being a constant function, one has an equality between both sides of \eqref{comparison-ineq} and it is clear that
\eqref{technical-ineq} and \eqref{technical-ineq-alternative} provide the same results.
\end{uw}

\begin{proof}[Proof of Corollary \ref{cor-1}] We assume that the assumption \eqref{additional-as-main-abs-res} holds. If $T$ is the period of a nontrivial $T$-periodic solution of \eqref{abstract-ev-eq}, then either $T\geq 1/\mu_0 M$ or the inequality, coming from \eqref{technical-ineq}, holds 
$$
1  \leq T L  (1-\mu_0/\mu)^{-1}\cdot \left[ (1-\mu M T)^{-1}  + 1/\mu T\right],
$$
which, after setting $\eta= \mu M T$, yields
$$
1 - \mu_0 MT/\eta \leq T L\left( (1-\eta)^{-1} + M/\eta \right).
$$
Consequently 
\begin{equation}\label{per-est-H}
T \geq 1/G(\eta) 
\end{equation}
where $G:(0,1) \to (0,+\infty)$ is given by
$$
G(\eta):= L/(1-\eta) + LM/\eta + \mu_0 M/\eta = 
 L/(1-\eta) + C/\eta
$$
with $C:=M(L + \mu_0)$. A direct computation shows that $G$ attains the minimal value 
$$
G_0=(\sqrt{L}+\sqrt{C})^2 
$$
at the point $\eta_0 = \sqrt{C}/(\sqrt{L}+\sqrt{C})$.
Now observe that either $\eta_0/MT \leq \mu_0$,
which is equivalent to the inequality
$$
T\geq \eta_0/\mu_0 M = \sqrt{C}/\mu_0 M (\sqrt{L}+\sqrt{C})=
C/\mu_0 M(\sqrt{LC} +C)= (1+L/\mu_0)/(\sqrt{LC} +C) 
$$
or for $\mu= \eta_0/MT$ we get, by use of \eqref{per-est-H}, 
$$
T\geq 1/G(\eta_0) = 1/G_0 = 1/(\sqrt{L}+\sqrt{C})^2 = 1/(L+C+2\sqrt{LC}).
$$
Hence taking into consideration that 
$$
(1+L/\mu_0)/(\sqrt{LC} +C) \geq 1/(L+C+2\sqrt{LC})
$$
we see that $T\geq 1/(L+C+2\sqrt{LC}) = 1/L\left(1+\sqrt{M(1+\mu_0/L)}\right)^2.$
\end{proof}

\begin{uw}\label{remark-local-abs}
If instead of the global Lipschitzianity of ${\mathbb F}$ in Corollary \ref{cor-1} (comp.  Theorem \ref{main-res-abs}) we assume that for any $R>0$ there exists $L_R>0$ such that
$$
\|\mathbb{F}(z_1)-\mathbb{F}(z_2)\|_{\mathbb{X}}\leq L_R\|z_1-z_2\|_{\mathbb{V}} \ \ \mbox{ for all }\ \ z_1,z_2\in \{z\in\mathbb{X}\, \mid \, \|z\|_{\mathbb{V}} \leq R\},
$$
then we can slightly refine the assertion. Namely, if $z:\mathbb{R}\to {\mathbb V}$ is a $T$-periodic solution and 
$R:= \max\{\|z(t)\|\, \mid \, t\in\mathbb{R} \}$,
then, due to Corollary \ref{cor-1}, one has $T\geq 1/L_R\left(1+\sqrt{M(1+\mu_0/L_R)}
\right)^2$.
\end{uw}

\section{Parabolic equations -- proof of Theorem \ref{main-res-par}}

Assume that $A:D(A)\to X$ is as in Section 1, i.e. $A$ is a positive self-adjoint operator on a separable Hilbert space $X$ with the norm $\|\cdot \|$. Let $X^{\beta}$ with the fractional norm given by $\|u \|_{\beta} = \| A^{\beta}u\|$, $u\in X^{\beta}$, where $A^\beta$ is the fractional power of the operator $A$ (see e.g. \cite{Henry}). It is well-known that $-A$ generates an analytic $C_0$-semigroup $e^{-tA}$ such that $e^{-tA}(X)\subset D(A)$ and
$$
\| e^{-t A}\|_{{\cal L}(X,X^{\beta})} \leq M_\beta/t^{\beta} \ \ \mbox{ for all } u\in X^{\beta}
$$
with $M_\beta = (\beta/e)^{\beta}$ if $\beta\in(0,1)$ and $M_\beta=1$ if $\beta=0$. Let us collect below facts concerning spectral properties of such operators that can be obtained by use of spectral theory for self-adjoint operators (see Lemma 3.1 in \cite{Rob-Vid-Lop}). 
\begin{prop} Under the above assumptions, for any $\beta \in [0,1)$ and $\mu >0$, there exists a decomposition $X=X_{\mu}^{+} \oplus X_{\mu}^{-}$ with $X_{\mu}^{+}\subset D(A)$ and such that:\\
{\em (i)} \parbox[t]{144mm}{$\|A u\|\leq \mu \|u\|$, for all $u\in X_\mu^{+}$, and $\| P_{\mu}^{+} u \|_\beta \leq \mu^\beta \|u\|$, for all $u\in X$;}\\[0.5em]
{\em (ii)} \parbox[t]{144mm}{$\| e^{-tA} u \| \leq e^{-\mu t} \|u\|$, for all $u\in X_{\mu}^{-}$, and
$\| e^{-tA} u \|_{\beta} \leq e^{-\mu t} \|u\|_{\beta}$ for all $u\in X^{\beta}\cap X_{\mu}^{-}$;}\\[0.5em]
{\em (iii)} \parbox[t]{144mm}{$\| e^{-tA} u \|_{\beta} \leq M_{\beta}t^{-\beta} e^{-\mu t}  \|u\|$ for all $t>0$ and $u\in X_{\mu}^{-}$.}
\end{prop}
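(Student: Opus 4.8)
The plan is to obtain all three statements from the spectral calculus of the positive self-adjoint operator $A$. By the spectral theorem there is a projection-valued measure $\{E(\Omega)\}$ on the Borel subsets of $(0,+\infty)$ with $A=\int_{(0,+\infty)}\lambda\,dE(\lambda)$, and for every Borel function $g$ the operator $g(A)=\int g(\lambda)\,dE(\lambda)$ satisfies $\|g(A)u\|^2=\int|g(\lambda)|^2\,d\|E(\lambda)u\|^2$ on its natural domain; in particular $A^\beta$, $e^{-tA}$ and the spectral projections below are all functions of $A$ and hence mutually commute. First I would define the splitting by a spectral cut at the level $\mu$: put $P_\mu^+:=\mathbbm{1}_{(0,\mu]}(A)$, $P_\mu^-:=\mathbbm{1}_{(\mu,+\infty)}(A)=I-P_\mu^+$, and $X_\mu^\pm:=P_\mu^\pm X$. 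These are complementary orthogonal projections with closed ranges, so $X=X_\mu^+\oplus X_\mu^-$, and since $A$ is bounded by $\mu$ on the range of $\mathbbm{1}_{(0,\mu]}(A)$ one gets $X_\mu^+\subset D(A)$.

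I would then read off (i) and (ii) directly, using only that the relevant scalar multiplier is bounded on the appropriate part of the spectrum. For $u\in X_\mu^+$ the measure $\|E(\cdot)u\|^2$ is supported in $(0,\mu]$, so $\|Au\|^2=\int_{(0,\mu]}\lambda^2\,d\|E(\lambda)u\|^2\le\mu^2\|u\|^2$; applying the same computation to the multiplier $\lambda^\beta$ gives $\|P_\mu^+u\|_\beta=\|A^\beta P_\mu^+u\|\le\mu^\beta\|u\|$ for every $u\in X$, which is (i). For $u\in X_\mu^-$ the measure is supported in $(\mu,+\infty)$, where $e^{-t\lambda}\le e^{-\mu t}$; feeding this into the multipliers $e^{-t\lambda}$ and $\lambda^\beta e^{-t\lambda}$ (the latter using that $A^\beta$ and $e^{-tA}$ commute) yields $\|e^{-tA}u\|\le e^{-\mu t}\|u\|$ and $\|e^{-tA}u\|_\beta\le e^{-\mu t}\|u\|_\beta$, which is (ii).

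The estimate (iii) is the only genuinely quantitative one, and I expect it to be the main obstacle, since it must carry simultaneously the parabolic smoothing rate $t^{-\beta}$ and the exponential decay $e^{-\mu t}$. Once more $\|e^{-tA}u\|_\beta=\|A^\beta e^{-tA}u\|\le\big(\sup_{\lambda>\mu}\lambda^\beta e^{-t\lambda}\big)\|u\|$ for $u\in X_\mu^-$, so everything reduces to the scalar function $\lambda\mapsto\lambda^\beta e^{-t\lambda}$ on the high modes $\lambda>\mu$. The constant $M_\beta$ comes from the elementary maximization $\max_{s>0}s^\beta e^{-s}=(\beta/e)^\beta=M_\beta$ (attained at $s=\beta$), whose rescaling gives $\sup_{\lambda>0}\lambda^\beta e^{-t\lambda}=M_\beta t^{-\beta}$; this is exactly the global smoothing bound $\|e^{-tA}\|_{{\cal L}(X,X^\beta)}\le M_\beta t^{-\beta}$ recorded before the proposition. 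To localize it to $X_\mu^-$ and gain the decay I would split the exponential as $e^{-t\lambda}=e^{-(t-s)\lambda}e^{-s\lambda}$ for a suitable $s\in(0,t)$, bounding $\lambda^\beta e^{-(t-s)\lambda}\le M_\beta(t-s)^{-\beta}$ and $e^{-s\lambda}\le e^{-s\mu}$ on $\{\lambda>\mu\}$. The delicate point, and the step I would spend the most care on, is arranging this split so that the product simultaneously keeps the sharp constant $M_\beta$, the power $t^{-\beta}$ and the full decay factor $e^{-\mu t}$.
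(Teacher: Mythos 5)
Your overall strategy --- the spectral cut $P_\mu^+=\mathbbm{1}_{(0,\mu]}(A)$, $P_\mu^-=I-P_\mu^+$, and the functional–calculus identity $\|g(A)u\|^2=\int|g(\lambda)|^2\,d\|E(\lambda)u\|^2$ --- is exactly what the paper has in mind: it offers no written proof of this proposition, only the remark that the facts follow from spectral theory for self-adjoint operators together with a pointer to Lemma 3.1 of \cite{Rob-Vid-Lop}. Your arguments for (i) and (ii) are complete and correct as written.

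Part (iii) is a genuine gap, and the step you yourself flag as delicate cannot be carried out. The reduction to the scalar problem is right: for $u\in X_\mu^-$ one has $\|e^{-tA}u\|_\beta\le\bigl(\sup_{\lambda\ge\mu}\lambda^\beta e^{-t\lambda}\bigr)\|u\|$, and since the spectrum of a general positive self-adjoint $A$ may meet every interval $(\mu,\mu+\delta)$, this supremum is essentially the best possible operator bound. But $\sup_{\lambda\ge\mu}\lambda^\beta e^{-t\lambda}$ equals $M_\beta t^{-\beta}$ when $t\le\beta/\mu$ and $\mu^\beta e^{-\mu t}$ when $t\ge\beta/\mu$, and it can exceed the target $M_\beta t^{-\beta}e^{-\mu t}$: already at $\lambda=\mu$ the required inequality reads $\mu^\beta\le M_\beta t^{-\beta}$, i.e.\ $\mu t\le\beta/e$, which fails for all larger $t$ (for instance $\beta=1/2$, $\mu=t=1$ gives $e^{-1}\approx 0.368$ on the left against $(2e)^{-1/2}e^{-1}\approx 0.158$ on the right). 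Consequently no choice of $s$ in your split $e^{-t\lambda}=e^{-(t-s)\lambda}e^{-s\lambda}$ can produce the product of the sharp constant, the power $t^{-\beta}$ and the full factor $e^{-\mu t}$; one always loses either in the power or in the exponential. What the spectral calculus does yield, via the subadditivity $\lambda^\beta=\bigl((\lambda-\mu)+\mu\bigr)^\beta\le(\lambda-\mu)^\beta+\mu^\beta$, is the bound $\|e^{-tA}u\|_\beta\le\bigl(M_\beta t^{-\beta}+\mu^\beta\bigr)e^{-\mu t}\|u\|$ for $u\in X_\mu^-$, and this weaker form, with the extra additive term $\mu^\beta$, is what you should aim to prove. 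As literally stated, item (iii) --- and hence the choice $m(t)=M_\beta t^{-\beta}$ in Proposition \ref{par-case-uhbd} --- seems to require this correction (or an additional gap assumption on $\sigma(A)$ above $\mu$), so the obstruction you ran into is in the statement, not in your method.
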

As an immediate consequence we get the following conclusion.
\begin{prop}\label{par-case-uhbd}
If $\mathbb{X}:=X$, $\mathbb{V}:=X^{\beta}$ with the fractional norm $\|\cdot \|_{\beta}$ and 
$\mathbb{A}:=-A$, then ${\mathbb A}$ satisfies the assumptions $(V_1)-(V_3)$ and has the $(UHBD)$ property with $\mu_0=0$, $M=1$, $K_{\mu}^{+} = \mu^\beta$, $K_{\mu}^{-}=1$, $m(t)= M_\beta/t^\beta$, $t>0$.
\end{prop}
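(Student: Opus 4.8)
The plan is to read off every ingredient directly from the spectral decomposition furnished by the preceding Proposition, since with $\mathbb{X}=X$, $\mathbb{V}=X^\beta$ and $\mathbb{A}=-A$ the abstract data are in one-to-one correspondence with its items (i)--(iii). I would set $\mathbb{X}_\mu^+:=X_\mu^+$, $\mathbb{X}_\mu^-:=X_\mu^-$ and $\mu_0:=0$, and then verify the structural hypotheses $(V_1)$--$(V_3)$ before turning to $(UHBD)$.

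For $(V_1)$ I use the standard nesting $D(\mathbb{A})=D(A)=X^1\subset X^\beta\subset X^0=X$ valid for $\beta\in[0,1)$. For $(V_2)$ the continuity of $X^\beta\hookrightarrow X$ follows from boundedness of $A^{-\beta}$, giving $\|u\|\le\|A^{-\beta}\|_{\mathcal{L}(X,X)}\,\|u\|_\beta$. For $(V_3)$, analyticity of $e^{-tA}$ yields $e^{-tA}(X)\subset D(A)\subset X^\beta$ for $t>0$; the restrictions form a $C_0$ semigroup on $X^\beta$ because $A^\beta$ commutes with $e^{-tA}$, so that strong continuity in the $\beta$-norm, $\|e^{-tA}u-u\|_\beta=\|e^{-tA}(A^\beta u)-A^\beta u\|\to0$ as $t\to0^+$, is inherited from the strong continuity of $e^{-tA}$ on $X$ applied to $A^\beta u\in X$.

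For $(UHBD)$ I take $M=1$ and $m(t):=M_\beta t^{-\beta}$. The invariance requirements $\mathbb{A}\mathbb{X}_\mu^+\subset\mathbb{X}_\mu^+$ and $\mathbb{A}(\mathbb{X}_\mu^-\cap D(\mathbb{A}))\subset\mathbb{X}_\mu^-$ hold because $X_\mu^\pm$ are spectral subspaces of the self-adjoint $A$. The three decay estimates in $(D_{\mu,-})$ are exactly items (ii) and (iii): the $\mathbb{X}$-contraction and the $\mathbb{V}$-contraction on $X_\mu^-\cap X^\beta$ come from (ii), while the smoothing bound $\|e^{-tA}w\|_\beta\le M_\beta t^{-\beta}e^{-\mu t}\|w\|$ is (iii), which fixes $m$. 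Condition $(D_{\mu,0})$ reduces to the integrability of $s^{-\beta}$ near $0$, valid since $\beta<1$. Finally the constants are read from (i): the orthogonal spectral projection $P_\mu^-$ satisfies $\|P_\mu^-\|_{\mathcal{L}(X,X)}=1$, giving $K_\mu^-=1$, and $\|P_\mu^+ u\|_\beta\le\mu^\beta\|u\|$ gives $K_\mu^+=\mu^\beta$.

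The one step that needs a short argument rather than a direct quotation is the $(D_{\mu,+})$ bound $\|\mathbb{A}w\|_\mathbb{V}\le\mu M\|w\|_\mathbb{V}$, i.e.\ $\|Aw\|_\beta\le\mu\|w\|_\beta$ on $X_\mu^+$. Here I would note that $A^\beta$, being a function of $A$, leaves the spectral subspace $X_\mu^+$ invariant and commutes with $A$, so that $\|Aw\|_\beta=\|A(A^\beta w)\|$ with $A^\beta w\in X_\mu^+$, and then apply the estimate $\|Au\|\le\mu\|u\|$ from (i) to $u:=A^\beta w$. This is the only place where one must combine two of the listed properties rather than invoke a single one, and it is precisely what pins down $M=1$; everything else is a direct transcription of (i)--(iii), which is why the statement is genuinely an immediate consequence.
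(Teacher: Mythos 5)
Your proposal is correct and follows exactly the route the paper intends: the paper gives no written argument beyond declaring the proposition an immediate consequence of the preceding spectral-decomposition result, and your verification simply transcribes items (i)--(iii) of that result into the conditions $(V_1)$--$(V_3)$ and $(D_{\mu,\pm})$, $(D_{\mu,0})$. The one step you rightly single out --- deducing $\|Aw\|_\beta\le\mu\|w\|_\beta$ on $X_\mu^+$ from the $X$-norm bound by commuting $A^\beta$ through the spectral subspace --- is handled correctly and is the only point where the paper's ``immediate'' claim genuinely requires an argument.
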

The proof involves spectral calculus for positive self-adjoint operators in Hilbert spaces 

\noindent \begin{proof}[Proof of Theorem \ref{main-res-par}] 
Assume that $u\in C(\mathbb{R}, X^{\beta})$ is a nontrivial $T$-periodic solution of  \eqref{parabolic-ev-eq} with $T>0$ (it is clear that $u$ is also a mild solution). By Proposition \ref{par-case-uhbd}, we may apply Theorem \ref{main-res-abs} and obtain for all $\mu \in (0,1/T)$
\begin{align*}
1 & \leq    T L 
\cdot \left[ \frac{\mu^\beta}{1-\mu T} +  
\frac{M_\beta}{\mu T} \left(e^{-\mu T}/T^{\beta}  + \int_{0}^{\mu T} (s/\mu)^{-\beta} \cdot e^{- s} \, ds \right) \right]\\
& =  T^{1-\beta} L 
\cdot \left[ \frac{(\mu T)^\beta}{1-\mu T} +  
\frac{M_\beta}{\mu T} \left(e^{-\mu T}  + (\mu T)^{\beta}\int_{0}^{\mu T} s^{-\beta} \cdot e^{- s} \, ds \right) \right],
\end{align*}
i.e.
$$
1 \leq T^{1-\beta} L\cdot H(\mu T)
$$
where $H:(0,1)\to (0,+\infty)$ is given by \eqref{par-H-eta-def}. It is clear that  $1 \leq T^{1-\beta} \cdot L\cdot K_\beta$.\\
\indent Now in order to prove \eqref{1/2-better-estimate} taking $\mu>0$ such that $\mu T=1/2$ we get $1\leq T^{1-\beta} L \cdot H(1/2)$. Finally note that, for $\beta \in (0,1)$,
\begin{align}
H(1/2) & \leq 2^{1-\beta} + 2M_\beta \left( e^{-1/2} + (1/2)^{\beta} \int_{0}^{1/2} s^{-\beta} \, ds\right)\\
& =  2^{1-\beta} + 2M_\beta/ e^{1/2} + M_\beta/(1-\beta),
\end{align}
which implies \eqref{1/2-better-estimate}. If $\beta=0$, then $H$ attains minimum at $\eta=1/2$ and
$$
1\leq T L \cdot H(1/2) = T \cdot 4 L, 
$$
which ends the proof.
\end{proof}
\begin{uw}\label{Rob-Vid-Lop-comp}
(i) If we apply in the above proof the inequality \eqref{technical-ineq-alternative} instead of \eqref{technical-ineq}, then
we get
\begin{align*}
1 & \leq    T L 
\cdot \left( \frac{\mu^\beta}{1-\mu T} +  
\frac{M_\beta}{\mu T (1-e^{-\mu T})}  \int_{0}^{\mu T} (s/\mu)^{-\beta} \cdot e^{- s} \, ds \right)\\
& =  T^{1-\beta} L 
\cdot \left( \frac{(\mu T)^\beta}{1-\mu T} +  
\frac{M_\beta}{(\mu T)^{1-\beta}(1-e^{-\mu T})} \int_{0}^{\mu T} s^{-\beta} \cdot e^{- s} \, ds \right),\\
& = T^{1-\beta}L\cdot\tilde H(\mu T),
\end{align*}
where $\tilde H:(0,1)\to (0,+\infty)$ is given by
$$
\tilde H(\eta):= \frac{\eta^\beta}{1-\eta} +  
\frac{M_\beta}{\eta^{1-\beta}(1-e^{-\eta})} \int_{0}^{\eta} s^{-\beta} \cdot e^{- s} \, ds.
$$
Observe that 
\begin{align} \nonumber
\tilde H(1/2) & = 2^{1-\beta} +  
\frac{M_\beta}{(1/2)^{1-\beta}(1-e^{-1/2})} \int_{0}^{1/2} s^{-\beta} \cdot e^{- s} \, ds \\
& <  2^{1-\beta} +  
\frac{M_\beta}{(1/2)^{1-\beta}(1-e^{-1/2})} \int_{0}^{1/2} s^{-\beta} \, ds
= 2^{1-\beta} + M_\beta/(1-e^{-1/2})(1-\beta), \label{est-better-01}
\end{align}
which allows us to deduce the estimate \eqref{Robinson-Vidal-Lopez-estimate} that was provided originally in 
\cite{Rob-Vid-Lop}.\\
\indent (ii) The estimate \eqref{main-res-est} is stronger than \eqref{Robinson-Vidal-Lopez-estimate}. Indeed, reasoning as in Remark \ref{alternative-procedure}, we see that $\tilde H(\eta)  >  H(\eta)$ for all $\eta\in (0,1)$. Therefore, in view of \eqref{est-better-01},  
$$
2^{1-\beta} + M_\beta/(1-e^{-1/2})(1-\beta)> \tilde H(1/2) \geq \min_{\eta\in (0,1)} \tilde H(\eta) > K_\beta,
$$ 
which shows the relation between the estimates.\\
\indent (iii) To see that also \eqref{1/2-better-estimate} is stronger that \eqref{Robinson-Vidal-Lopez-estimate} one can directly verify the inequality
$$
2^{1-\beta} + M_\beta/(1-e^{-1/2})(1-\beta) > 2^{1-\beta} + 2M_\beta/e^{1/2} + M_\beta/(1-\beta)
$$
that is equivalent to $4>e$.
\end{uw}

\section{Spectrum of hyperbolic operator}
Let us assume that $A:D(A)\to X$ is a sectorial operator in a Banach space $X$ and that the so-called {\em hyperbolic operator} $\bf{A} = \bold{A}_{\alpha}:D(\bold{A}_\alpha)\to \bold{X}$ in $\bold{X}:= X^{1/2} \times X$, where $X^{1/2}$ is the fractional space related to $A$, is defined by
\begin{equation}\label{hyper-op}
{\bold A} (u,v) :=  (v, -A (\alpha \cdot v+u) ), \ (u,v) \in D({\bold A}).
\end{equation}
with
$$
D({\bold A}) := \{ (u,v) \in {\bold X}\, \mid\, \alpha \cdot v+u \in D(A),\, v\in X^{1/2} \}.
$$
Without loss of generality we may assume that $X$ is a complex space, so is $\bold{X}$. Let us start with the following observation.
\begin{lm}\label{basic_lemma}
Suppose that $(u,v), (g,h) \in {\bold X}$ and $\xi \in \mathbb{C}$. 
If $\xi \neq -1 /\alpha$ then
\begin{equation}\label{res-eq}
(u,v)\in D({\bold A}) \ \ \mbox{ and } \ \ (\xi I - {\bold A}) (u,v) = (g,h)
\end{equation}
if and only if $w:=u+\alpha \cdot v\in D(A)$,
\begin{equation}\label{u-v-eqs}
u = \frac{1}{1+\alpha \xi} \cdot w+ \frac{\alpha}{1+\alpha \xi} \cdot g, \  \ \ \  v = \frac{\xi}{1+\alpha \xi} \cdot w -  \frac{1}{1+\alpha \xi} \cdot g
\end{equation}
and
\begin{equation}\label{A-res-eq}
(A-s(\xi)I) w =  \frac{\xi}{1+\alpha \xi} \cdot g  + h
\end{equation}
where the mapping $s:\mathbb{C}\setminus \{-1/\alpha\}\to \mathbb{C}$ is given by
$$
s(\xi):=  - \ \frac{\xi^2}{1+\alpha \xi}.
$$
If $\xi = -1/\alpha$ then \eqref{res-eq} is equivalent to the following condition
\begin{equation}
g\in D(A), \ \ \ u = -\alpha\cdot(-\alpha^2\cdot Ag + g - \alpha \cdot h)   \ \ \ \mbox{ and  } \ \ \ v =-\alpha\cdot(\alpha\cdot Ag+h).
\end{equation}
\end{lm}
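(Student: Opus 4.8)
The plan is to prove both biconditionals by a direct elimination argument, reading the vector equation $(\xi I - {\bold A})(u,v) = (g,h)$ componentwise and using the substitution $w := u + \alpha v$ to decouple the two unknowns. Since ${\bold A}(u,v) = (v, -A(\alpha v + u))$, the identity $(\xi I - {\bold A})(u,v) = (g,h)$ is equivalent to the system $\xi u - v = g$ and $\xi v + A(\alpha v + u) = h$. The second equation already features $w = \alpha v + u$, so it reads $\xi v + Aw = h$, while the first gives $v = \xi u - g$ and hence $w = (1+\alpha\xi)u - \alpha g$. Everything then hinges on whether the scalar factor $1 + \alpha\xi$ vanishes, which is precisely the case split in the statement.

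First I would dispatch the generic case $\xi \neq -1/\alpha$. Here $1+\alpha\xi \neq 0$, so $w = (1+\alpha\xi)u - \alpha g$ can be solved for $u = (w + \alpha g)/(1+\alpha\xi)$; feeding this into $v = \xi u - g$ yields the formula for $v$ in \eqref{u-v-eqs}, and inserting that $v$ into $\xi v + Aw = h$ and collecting the $w$-terms gives $(A - s(\xi)I)w = \frac{\xi}{1+\alpha\xi}g + h$, using $-\xi^2/(1+\alpha\xi) = s(\xi)$; this is \eqref{A-res-eq}. The converse is the same chain of equalities reversed, starting from \eqref{u-v-eqs} and \eqref{A-res-eq}. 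The one point requiring care, and what I expect to be the main obstacle, is the domain bookkeeping: in the forward direction $(u,v) \in D({\bold A})$ supplies $w = u + \alpha v \in D(A)$ at no cost, whereas in the converse direction I must check from \eqref{u-v-eqs} that $u + \alpha v$ indeed equals $w \in D(A)$ and that $v \in X^{1/2}$. The latter holds because $v$ is a linear combination of $w$ and $g$, with $w \in D(A) \subset X^{1/2}$ and $g \in X^{1/2}$; together with the analogous membership $u \in X^{1/2}$ this certifies $(u,v) \in D({\bold A})$ rather than merely $(u,v) \in {\bold X}$.

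Finally, for the degenerate value $\xi = -1/\alpha$ the factor $1+\alpha\xi$ is zero, so the relation $w = (1+\alpha\xi)u - \alpha g$ collapses to $w = -\alpha g$ and can no longer be solved for $u$. Instead $w = -\alpha g$ is forced, and since $(u,v) \in D({\bold A})$ requires $w \in D(A)$, this is equivalent to $g \in D(A)$, the distinguishing necessary condition of this case. Granting $g \in D(A)$, I would substitute $w = -\alpha g$ (so that $Aw = -\alpha Ag$) into $\xi v + Aw = h$ with $\xi = -1/\alpha$ and solve for $v = -\alpha(\alpha Ag + h)$, then recover $u = w - \alpha v$, which rearranges to the stated expression $u = -\alpha(-\alpha^2 Ag + g - \alpha h)$. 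The converse is once more a direct verification that these formulas satisfy both components of the resolvent equation; as before the only thing to monitor is the membership of the reconstructed $v$ in $X^{1/2}$, which is exactly where this degenerate case genuinely departs from the generic one.
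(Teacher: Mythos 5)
Your argument is correct and is essentially the paper's own proof: read $(\xi I-{\bold A})(u,v)=(g,h)$ componentwise as $\xi u-v=g$ and $\xi v+Aw=h$ with $w=u+\alpha v$, then solve, splitting on whether $1+\alpha\xi$ vanishes (the paper dispatches the degenerate case as ``similar'' and omits it, whereas you carry it out). The one check you flag but leave open --- that the reconstructed $v$ lies in $X^{1/2}$ when $\xi=-1/\alpha$ --- closes immediately from $\alpha v=w-u=-\alpha g-u$ together with the standing assumption $u,g\in X^{1/2}$.
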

\begin{proof} Suppose that \eqref{res-eq} holds. Then, by the definition of ${\bold A}$, $w=u+\alpha \cdot v\in D(A)$ and 
\begin{equation}\label{A-sys-of-eqs}
\xi \cdot u - v = g  \ \ \  \mbox{ and  }\ \ \  \xi \cdot v + A w = h.
\end{equation}
Substituting $u= w-\alpha\cdot v$ in the first equality of \eqref{A-sys-of-eqs}  one gets 
\begin{equation}\label{what-if}
\xi \cdot w - (1+\alpha \xi) \cdot v = g
\end{equation}
and
$v= \xi (1+\alpha \xi)^{-1} \cdot w - (1+\alpha\xi)^{-1} \cdot g$, i.e. the second part of 
\eqref{u-v-eqs}. Applying it in $u=w-\alpha \cdot v$, one has $u=(1+\alpha \xi)^{-1}\cdot w+\alpha(1+\alpha\xi)^{-1}\cdot g$, i.e. the first part of \eqref{u-v-eqs}, and applying second part of \eqref{u-v-eqs} in the second equality of \eqref{A-sys-of-eqs} we arrive at \eqref{A-res-eq}.\\
\indent Now suppose that $w=u+\alpha \cdot v\in D(A)$ and  both \eqref{u-v-eqs} and  \eqref{A-res-eq} hold. Then, obviously $v= \alpha^{-1} \cdot (w-u) \in X^{1/2}$ and  a direct calculation shows that
$$
(\xi I - {\bold A})(u,v) = (\xi \cdot u - v,\ \ \xi \cdot v+Aw)= (g,\ -s(\xi)\cdot w -
\xi(1+\alpha\xi)^{-1}\cdot g +Aw) = (g,h),
$$
which ends the proof of the first equivalence.\\
\indent The proof of the second one is similar and therefore omitted. 
\end{proof}
\begin{uw}\label{resolvent-verification}
Suppose $B:D(B)\to X$ is an arbitrary closed operator in a Banach space $X$. Recall that $0\in \rho(B)$ if and only if for any $g\in X$ there exists a unique $u\in D(B)$ such that $Bu=g$.
\end{uw}
The next result provides the characterization  of the spectrum of ${\bold A}$.
\begin{prop}\label{thm_general_properties_of_A}
The operator ${\bold A}$ defined by \eqref{hyper-op} has the following properties
\begin{enumerate}[{\em (i)}]
\item $\bold A$ is closed.
\item $\rho (\bold A) \setminus \left\{ - 1/\alpha \right\} = s^{- 1} (\rho (A))$.
\item $\sigma (\bold A) \setminus \left\{ - 1/\alpha \right\} = s^{- 1}(\sigma (A))$.
\item If $\xi \neq -1/\alpha$, then
$\Ker (\xi \bold I - \bold A) = \left\{ (u,\xi \cdot u)\ | \  u \in \Ker (s (\xi) I - A)\right\}$.
\end{enumerate}
\end{prop}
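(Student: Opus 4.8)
The plan is to prove (i) first by a direct closedness argument and then to read off (ii)--(iv) from Lemma \ref{basic_lemma}, which already reduces the resolvent equation for $\bold A$ to the scalar resolvent equation for $A$. The reason (i) must come first is that it is closedness of $\bold A$ that legitimizes invoking Remark \ref{resolvent-verification} (via the closed graph theorem) to replace ``$\xi\in\rho(\bold A)$'' by ``$\xi I-\bold A$ is a bijection of $D(\bold A)$ onto $\bold X$''.

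For (i) I would argue by hand. Take $(u_n,v_n)\in D(\bold A)$ with $(u_n,v_n)\to(u,v)$ in $\bold X$ and $\bold A(u_n,v_n)=(v_n,-A(\alpha v_n+u_n))\to(g,h)$ in $\bold X$. The first coordinate gives $v_n\to g$ in $X^{1/2}$, hence in $X$ by the continuous embedding $X^{1/2}\hookrightarrow X$, while convergence in $\bold X$ gives $v_n\to v$ in $X$; uniqueness of limits yields $v=g\in X^{1/2}$. Then $\alpha v_n+u_n\to\alpha v+u=:w$ in $X$ while $A(\alpha v_n+u_n)\to -h$, so closedness of the sectorial operator $A$ forces $w\in D(A)$ and $Aw=-h$. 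Hence $\alpha v+u\in D(A)$ and $v\in X^{1/2}$, i.e. $(u,v)\in D(\bold A)$, with $\bold A(u,v)=(v,-Aw)=(g,h)$. The only ingredients are closedness of $A$ and the embedding $X^{1/2}\hookrightarrow X$.

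For (ii), fix $\xi\neq-1/\alpha$. Since both $A$ and $\bold A$ are closed, Remark \ref{resolvent-verification} lets me replace $\xi\in\rho(\bold A)$ and $s(\xi)\in\rho(A)$ by bijectivity of $\xi I-\bold A$ and of $A-s(\xi)I$ respectively. Lemma \ref{basic_lemma} provides, for each $(g,h)$, a one-to-one correspondence between solutions $(u,v)\in D(\bold A)$ of $(\xi I-\bold A)(u,v)=(g,h)$ and solutions $w\in D(A)$ of $(A-s(\xi)I)w=\frac{\xi}{1+\alpha\xi}g+h$, through the explicit formulas \eqref{u-v-eqs}. I would use this both ways. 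If $s(\xi)\in\rho(A)$, then for every $(g,h)$ the scalar equation has a unique $w$, which produces a unique $(u,v)$, so $\xi\in\rho(\bold A)$. Conversely, if $\xi\in\rho(\bold A)$, I obtain surjectivity of $A-s(\xi)I$ by solving $(\xi I-\bold A)(u,v)=(0,f)$ for an arbitrary $f\in X$ (so the right-hand side of the scalar equation is exactly $f$), and injectivity by feeding any $w\in\Ker(A-s(\xi)I)$ through \eqref{u-v-eqs} with $g=0$ to land in $\Ker(\xi I-\bold A)=\{0\}$, whence $w=(1+\alpha\xi)u=0$.

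For (iii), since $s$ is defined on $\mathbb{C}\setminus\{-1/\alpha\}$ and $\rho(A),\sigma(A)$ partition $\mathbb{C}$, their preimages partition $\mathbb{C}\setminus\{-1/\alpha\}$, so taking complements of (ii) inside $\mathbb{C}\setminus\{-1/\alpha\}$ gives (iii). For (iv), I specialize Lemma \ref{basic_lemma} to $(g,h)=(0,0)$: then $(u,v)\in\Ker(\xi I-\bold A)$ iff $w=u+\alpha v\in\Ker(s(\xi)I-A)$ with $u=w/(1+\alpha\xi)$ and $v=\xi w/(1+\alpha\xi)=\xi u$; since $w\mapsto w/(1+\alpha\xi)$ maps the subspace $\Ker(s(\xi)I-A)$ onto itself, the $u$-component ranges over exactly that kernel, giving the claimed description. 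I expect the main obstacle to be the bookkeeping in (ii) --- being scrupulous that bijectivity transfers in both directions and that the reconstruction $w\mapsto(u,v)$ is a genuine bijection --- rather than any deep difficulty, since Lemma \ref{basic_lemma} carries the real content.
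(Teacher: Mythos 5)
Your proposal is correct and follows essentially the same route as the paper: a direct sequential closedness argument for (i) using the closedness of $A$ and the embedding $X^{1/2}\hookrightarrow X$, and then (ii)--(iv) read off from Lemma \ref{basic_lemma} together with Remark \ref{resolvent-verification}, with (iii) obtained by taking complements of (ii) in $\mathbb{C}\setminus\{-1/\alpha\}$. The only cosmetic difference is that you split the converse direction of (ii) explicitly into surjectivity and injectivity of $A-s(\xi)I$, whereas the paper phrases it as uniqueness of the solution $w$ of the scalar resolvent equation with $g=0$; the content is identical.
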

\begin{proof}
(i) Suppose  $(u_n,v_n) \in D({\bold A})$, $n\geq 1$, and $(u_n,v_n)\to (u,v)$ and ${\bold A}(u_n, v_n)\to (g,h)$ in ${\bold X}$. It follows directly that $u_n \to u$ in $X^{1/2}$, $v_n\to v$ in $X$ and that $v_n \to g$ in $X^{1/2}$ and $-A(u_n+\alpha \cdot v_n)\to h$, which implies $v=g \in X^{1/2}$ and that $u_n + \alpha\cdot v_n \to u+\alpha \cdot v$. By the closedness of $A$, $u+\alpha\cdot v \in D(A)$ and $A(u+\alpha\cdot v)=-h$. It means that $(u,v)\in D(\bold A)$ and ${\bold A} (u,v) = (v,-A(u+\alpha \cdot v))=(g,h)$, which shows (i).\\
\indent (ii) If $\xi\in \rho (\bold A) \setminus \left\{ - 1/\alpha \right\}$, then, in particular, for $g=0$ and any $h\in X$,  there is a unique $(u,v)\in D(\bold A)$ such that $(\xi I - {\bold A})(u,v)=(0,h)$, which due to Lemma \ref{basic_lemma} means that $w:=u+\alpha \cdot v\in D(A)$ is the only solution of $(A-s(\xi)I)w=h$, which, in view of Remark \ref{resolvent-verification} means that $s(\xi)\in \rho(A)$. On the other hand, if $s(\xi)\in \rho(A)$ and we take any $(g,h)\in {\bold X}$, then we get a unique $w\in D(A)$ satisfying \eqref{A-res-eq}. Then $(u,v)$ given by \eqref{u-v-eqs} solves $(\xi I - {\bold A})(u,v)=(g,h)$. The uniqueness of solutions comes immediately from the uniqueness of $w$. Hence, again by Remark \ref{resolvent-verification}, we infer that $\xi\in \rho({\bold A})$.\\    
\indent (iii) Observe that, by use of (ii), 
\begin{align*}
\sigma({\bold A})\setminus \{-1/\alpha\} & = 
\left( \mathbb{C} \setminus \rho({\bold A}) \right)  \setminus\{-1/\alpha\} =\\
& = \left( \mathbb{C}\setminus\{-1/\alpha\} \right)  
\setminus \left( \rho({\bold A})  \setminus\{-1/\alpha\}
\right)
= \left( \mathbb{C}\setminus\{-1/\alpha\}\right) \setminus s^{-1}(\rho(A))\\
& = s^{-1} (\mathbb{C}\setminus \rho(A))=
s^{-1}(\sigma(A)).
\end{align*}
\indent (iv) follows directly from Lemma \ref{basic_lemma}.
\end{proof}
\begin{wn}\label{cor_properties_of_A-characterization}
If $A$ is an unbounded operator with $\sigma(A)\subset (0,+\infty)$, then
$$
\sigma({\bold A})\setminus\left\{-1/\alpha\right\} = \xi_-(\sigma(A)) \cup \xi_+(\sigma(A))
$$
and
$$
\sigma_p ({\bold A}) = \xi_- (\sigma_p (A)) \cup \xi_+ (\sigma_p(A))
$$
where if $\lambda>(2/\alpha)^2$
$$
\xi_{-}(\lambda)=\frac{-\alpha \lambda -\sqrt{\Delta(\lambda)} }{2}=- \frac{2}{\alpha} \cdot \frac{1}{1-\sqrt{1-(2/\alpha)^2/\lambda}},
$$
$$
\xi_{+}(\lambda)=\frac{-\alpha \lambda +\sqrt{\Delta(\lambda)} }{2}=- \frac{2}{\alpha} \cdot \frac{1}{1+\sqrt{1-(2/\alpha)^2/\lambda}},
$$
if $0<\lambda<(2/\alpha)^2$
$$
\xi_{-}(\lambda)=\frac{-\alpha \lambda -i \sqrt{-\Delta(\lambda)} }{2}=- \frac{2}{\alpha} \cdot \frac{1}{1-i\sqrt{(2/\alpha)^2/ \lambda-1}},
$$
$$
\xi_{+}(\lambda)=\frac{-\alpha \lambda +i \sqrt{-\Delta(\lambda)} }{2}=- \frac{2}{\alpha} \cdot \frac{1}{1+i\sqrt{(2/\alpha)^2/ \lambda-1}}
$$
and if $\lambda=(2/\alpha)^2$
$$
\xi_{-}(\lambda)=\xi_{+}(\lambda)=\frac{-\alpha\lambda}{2},
$$
with $\Delta(\lambda)=(\alpha\lambda)^2 -4\lambda$. Moreover
\begin{enumerate}[{\em (i)}]
\item For any $\lambda > 0$,  $\xi_{-}(\lambda)$ and $\xi_{+}(\lambda)$ are the roots of 
\begin{equation}\label{polynomial}
\xi^2 + \alpha \lambda \xi + \lambda=0,
\end{equation}
in particular
$$
\xi_-(\lambda)+\xi_+(\lambda)= - \alpha\lambda  \ \ \mbox{ and } \ \  \xi_-(\lambda)\cdot \xi_+(\lambda)= \lambda.
$$
\item $\sigma ( {\bold A} ) \subset \{z\in\mathbb{C}\,\mid \, \mathrm{Re}\, z<0\}$.
\item If $\lambda> (2/\alpha)^2$ then
$$
\xi_-(\lambda)<-2/\alpha < \xi_+ (\lambda) < -1/\alpha.
$$
\item If $0<\lambda < (2/\alpha)^2$  then
$$
-2/\alpha < \mathrm{Re}\, \xi_\pm(\lambda) < 0.
$$
\item $\xi_{-} (\lambda)\to -\infty$ as $\lambda\to \infty$ and $\xi_-$ is decreasing on $\left( (2/\alpha)^2, +\infty\right)$.
\item $\xi_+ (\lambda) \to -1/\alpha$ as $\lambda \to \infty$ and $\xi_+$ is increasing on $\left( (2/\alpha)^2, +\infty\right)$.
\end{enumerate}
\end{wn}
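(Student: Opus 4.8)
The plan is to reduce the entire corollary to Proposition \ref{thm_general_properties_of_A}, which already expresses $\sigma(\mathbf{A})\setminus\{-1/\alpha\}$ and the kernels $\Ker(\xi\mathbf{I}-\mathbf{A})$ through the scalar map $s(\xi)=-\xi^2/(1+\alpha\xi)$. Since $\sigma(A)\subset(0,+\infty)$, everything then becomes an explicit analysis of the fibers $s^{-1}(\lambda)$ for $\lambda>0$, together with careful bookkeeping at the single exceptional value $\xi=-1/\alpha$.

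First I would compute $s^{-1}(\lambda)$ explicitly. For $\xi\neq-1/\alpha$ the equation $s(\xi)=\lambda$ is, after clearing the denominator, equivalent to the quadratic $\xi^2+\alpha\lambda\xi+\lambda=0$, which is exactly \eqref{polynomial}; its roots are $\xi_\pm(\lambda)=(-\alpha\lambda\pm\sqrt{\Delta(\lambda)})/2$ with $\Delta(\lambda)=\alpha^2\lambda^2-4\lambda=\lambda(\alpha^2\lambda-4)$. The sign of $\Delta(\lambda)$ separates the three regimes $\lambda>(2/\alpha)^2$, $\lambda=(2/\alpha)^2$, and $0<\lambda<(2/\alpha)^2$, producing the three displayed forms (real distinct roots, a double root, complex conjugate roots). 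A short but essential remark is that $-1/\alpha$ is never a root: substituting $\xi=-1/\alpha$ into the left side of \eqref{polynomial} gives $1/\alpha^2\neq0$. Hence $s^{-1}(\lambda)=\{\xi_-(\lambda),\xi_+(\lambda)\}\subset\mathbb{C}\setminus\{-1/\alpha\}$ for every $\lambda>0$.

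With this fiber computation the two characterizations follow at once. By Proposition \ref{thm_general_properties_of_A}(iii), $\sigma(\mathbf{A})\setminus\{-1/\alpha\}=s^{-1}(\sigma(A))=\xi_-(\sigma(A))\cup\xi_+(\sigma(A))$, which is the first formula. For the point spectrum, since each $\xi_\pm(\lambda)\neq-1/\alpha$, Proposition \ref{thm_general_properties_of_A}(iv) shows that for $\xi\neq-1/\alpha$ one has $\Ker(\xi\mathbf{I}-\mathbf{A})\neq\{0\}$ precisely when $\Ker(s(\xi)I-A)\neq\{0\}$, i.e. $\xi\in\sigma_p(\mathbf{A})$ iff $s(\xi)\in\sigma_p(A)$; taking fibers gives $\sigma_p(\mathbf{A})\setminus\{-1/\alpha\}=\xi_-(\sigma_p(A))\cup\xi_+(\sigma_p(A))$. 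To close the gap at $-1/\alpha$ I would invoke the second equivalence in Lemma \ref{basic_lemma} with $(g,h)=(0,0)$: it forces $u=0$ and $v=0$, so $\Ker(-\tfrac{1}{\alpha}\mathbf{I}-\mathbf{A})=\{0\}$ and $-1/\alpha\notin\sigma_p(\mathbf{A})$, matching the stated set exactly.

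It remains to verify (i)--(vi), all elementary consequences of the formulas. Property (i) is Vieta applied to \eqref{polynomial}. The equivalent rationalized expressions come from multiplying $(-\alpha\lambda\pm\sqrt{\Delta})/2$ by the conjugate and simplifying via $\Delta=\alpha^2\lambda^2-4\lambda$ (the numerator collapses to $4\lambda$), together with $\sqrt{\Delta}=\alpha\lambda\sqrt{1-(2/\alpha)^2/\lambda}$; these forms make (iii)--(vi) transparent. For $\lambda>(2/\alpha)^2$ the square root lies in $(0,1)$, so $\tfrac12<(1+\sqrt{\cdots})^{-1}<1$ and $(1-\sqrt{\cdots})^{-1}>1$, giving the chain $\xi_-<-2/\alpha<\xi_+<-1/\alpha$ of (iii); the monotonicity in (v)--(vi) follows because $\lambda\mapsto\sqrt{1-(2/\alpha)^2/\lambda}$ is increasing, and the limits $\xi_-\to-\infty$, $\xi_+\to-1/\alpha$ are read off directly from these forms. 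For $0<\lambda<(2/\alpha)^2$ one has $\mathrm{Re}\,\xi_\pm=-\alpha\lambda/2\in(-2/\alpha,0)$, which is (iv). Finally (ii) is the union of the negativity statements in (iii) and (iv) together with $\mathrm{Re}(-1/\alpha)<0$ for the exceptional point. The only step demanding genuine care rather than routine bookkeeping is the treatment of $-1/\alpha$: one must confirm both that it never coincides with any $\xi_\pm(\lambda)$ and that it contributes nothing to the point spectrum, so that the fiber computation aligns cleanly with the sets appearing in Proposition \ref{thm_general_properties_of_A}.
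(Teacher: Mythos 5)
Your proposal is correct and follows essentially the same route as the paper: the paper's proof likewise reduces everything to Proposition \ref{thm_general_properties_of_A} by observing that $s^{-1}(\{\lambda\})=\{\xi_-(\lambda),\xi_+(\lambda)\}$ are the roots of \eqref{polynomial}, and then declares (i)--(vi) immediate. You merely supply the details the paper omits, most usefully the explicit check that $\xi=-1/\alpha$ is never a root of \eqref{polynomial} and contributes nothing to $\sigma_p(\mathbf{A})$.
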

\begin{proof}
Observe that $s^{-1}(\{\lambda\})$ consists of $\xi \in \mathbb{C}$ solving \eqref{polynomial},
i.e. $s^{-1}(\{\lambda\}) = \{\xi_-(\lambda), \xi_+(\lambda) \}$.\\
\indent Assertions (i)-(vi) are immediate.
\end{proof}

In order to estimate the norms of projections and ${\bold A}$ on eigenspaces we shall need the following elementary fact.
\begin{lm}\label{elementary-max}
For any $z\in\mathbb{C}$ one has the following equalities
$$
\max\{|z \cdot z_1 + z_2| \, \mid\, z_1, z_2\in \mathbb{C},\,    |z_1|^2 + |z_2|^2=1 \} =\sqrt{1+|z|^2};    \leqno{(i)}
$$
$$
\max\{|z_1|^2+ |2\cdot z \cdot z_1+z_2|^2\, \mid\, z_1, z_2\in \mathbb{C},\, |z_1|^2+|z_2|^2=1 \} = (|z|+\sqrt{1+|z|^2})^2. \leqno{(ii)}
$$
\end{lm}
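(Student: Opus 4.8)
The plan is to read both quantities as operator norms of explicit $2\times 2$ matrices on $\mathbb{C}^2$ equipped with the standard Hermitian inner product $\langle(a,b),(c,d)\rangle = a\bar c + b\bar d$, and to compute them directly.

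For part $(i)$ there is essentially no obstacle. I would note that $z z_1 + z_2 = \langle (z_1,z_2),(\bar z,1)\rangle$, so the Cauchy--Schwarz inequality gives $|z z_1 + z_2| \le \|(z_1,z_2)\|\,\|(\bar z,1)\| = \sqrt{1+|z|^2}$ on the unit sphere $|z_1|^2+|z_2|^2=1$. Equality holds when $(z_1,z_2)$ is a scalar multiple of $(\bar z,1)$, and since that multiple can be chosen of unit norm the bound is attained, giving the stated value.

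For part $(ii)$ the quantity $|z_1|^2 + |2zz_1+z_2|^2$ is $\|T(z_1,z_2)\|^2$ for the linear map $T(z_1,z_2) = (z_1,\,2zz_1+z_2)$, i.e. $T=\begin{pmatrix}1&0\\2z&1\end{pmatrix}$; hence its maximum over the unit sphere is the largest eigenvalue of the positive Hermitian matrix $T^*T = \begin{pmatrix}1+4|z|^2 & 2\bar z\\ 2z & 1\end{pmatrix}$. This matrix has trace $2+4|z|^2$ and determinant $1$, so its eigenvalues solve $\lambda^2 - (2+4|z|^2)\lambda + 1 = 0$, with larger root $\lambda_+ = (1+2|z|^2) + \sqrt{(1+2|z|^2)^2-1}$. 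The one genuinely substantive step—the main obstacle, such as it is—is the algebraic simplification: one checks $(1+2|z|^2)^2 - 1 = 4|z|^2(1+|z|^2)$, so that the radical equals $2|z|\sqrt{1+|z|^2}$ and $\lambda_+ = 1 + 2|z|^2 + 2|z|\sqrt{1+|z|^2}$; expanding $(|z|+\sqrt{1+|z|^2})^2 = |z|^2 + 2|z|\sqrt{1+|z|^2} + (1+|z|^2)$ shows the two expressions coincide, which proves $(ii)$.

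If a matrix-free argument is preferred, I would instead reduce to a real optimization: for fixed $|z_1|=a\in[0,1]$ the phase of $z_2$ can be aligned with $2zz_1$, so that $\max|2zz_1+z_2| = 2|z|a + \sqrt{1-a^2}$ and the objective becomes $1 + 4|z|^2 a^2 + 4|z|\,a\sqrt{1-a^2}$. The substitution $a=\sin\phi$ turns this into $1 + 2|z|^2 - 2|z|^2\cos 2\phi + 2|z|\sin 2\phi$, whose maximum over $\phi$ is $1 + 2|z|^2 + 2|z|\sqrt{1+|z|^2}$, again equal to $(|z|+\sqrt{1+|z|^2})^2$. Either route reduces the lemma to one short perfect-square identity, so the proof is elementary as claimed.
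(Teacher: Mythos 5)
Your proof is correct. The paper states this lemma as an ``elementary fact'' and gives no proof at all, so there is no argument of the authors' to compare against; your identification of the two maxima as $\|(\,\bar z,1)\|$ (via Cauchy--Schwarz, with equality attained) and as the largest eigenvalue of $T^{*}T$ for $T=\bigl(\begin{smallmatrix}1&0\\2z&1\end{smallmatrix}\bigr)$, together with the verification $(1+2|z|^2)^2-1=4|z|^2(1+|z|^2)$, is a complete and clean justification, and the phase-alignment/trigonometric alternative for (ii) is equally valid.
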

The following proposition will be crucial when computing the norms of projection onto spectral decomposition components.
\begin{prop}\label{one-dim-projections}
Suppose that $\lambda\in \sigma_p(A) \setminus \{(2/\alpha)^2\}$ and $e_0$ be element of $\mathrm{Ker}\, (A-\lambda I)$ with $\|e_0\|=1$. Then, for any $u,v\in \mathbb{C}$,
\begin{equation}\label{change-of-variable} 
u\cdot (e_0,0)+v \cdot (0,e_0) = p_{\lambda}^-(u,v)\cdot {\bold e}_{\lambda}^{-} + p_{\lambda}^+ (u,v) \cdot {\bold e}_{\lambda}^{+}
\end{equation}
with ${\bold e}_{\lambda}^{-}:= (e_0,\xi_-(\lambda) \cdot e_0)$,
${\bold e}_{\lambda}^{+}:=(e_0, \xi_{+}(\lambda) \cdot e_0 )$, being the eigenvalues of ${\bold A}$ corresponding to $\xi_+(\lambda)$ and $\xi_-(\lambda)$, respectively, and
$$
p_{\lambda}^{-} (u,v):= \frac{ \xi_{+}(\lambda)u-v}{\xi_{+}(\lambda)-\xi_{-}(\lambda)}, \ \ \ \ \ p_{\lambda}^{+} (u,v) := \frac{\xi_{-}(\lambda)u-v}{\xi_{-}(\lambda)-\xi_{+}(\lambda)}.
$$
Let ${\bold X}_{e_0}:= \mathrm{span}\left\{(e_0,0),(0,e_0)\right\}$,  ${\bold P}_{\lambda}^{-}:{\bold X}_{e_0}  \to \mathbb{C}\cdot {\bold e}_{\lambda}^{-}$ and ${\bold P}_{\lambda}^{+}:{\bold X}_{e_0} \to \mathbb{C}\cdot {\bold e}_{\lambda}^{+}$ be the projections, i.e.
$$
{\bold P}_{\lambda}^{-} (u\cdot e_0,v\cdot e_0)=p_{\lambda}^{-} (u,v) \cdot {\bold e}_{\lambda}^{-},  \ \ \ \ \ 
{\bold P}_{\lambda}^{+} (u\cdot e_0,v\cdot e_0)=p_{\lambda}^{+} (u,v) \cdot {\bold e}_{\lambda}^{+}
$$
If $\lambda>(2/\alpha)^2$, then
\begin{equation}
\|{\bold P}_{\lambda}^{-}\| = \|{\bold P}_{\lambda}^{+}\| = 1/\sqrt{1-(2/\alpha)^2/\lambda}
\end{equation}
and, if $0<\lambda< (2/\alpha)^2$, then
\begin{equation}
\|{\bold P}_{\lambda}^{-}\| = \|{\bold P}_{\lambda}^{+}\| = 1/\sqrt{1-\lambda/(2/\alpha)^2}.
\end{equation}
\end{prop}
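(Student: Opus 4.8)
The plan is to reduce everything to the two-dimensional coordinate computation on $\mathbf{X}_{e_0}$ and then invoke Lemma \ref{elementary-max}(i). First I would dispose of the change-of-variable identity \eqref{change-of-variable}, which is pure linear algebra: writing $u\cdot(e_0,0)+v\cdot(0,e_0)=(ue_0,ve_0)$ and demanding it equal $p_{\lambda}^{-}\mathbf{e}_\lambda^-+p_{\lambda}^{+}\mathbf{e}_\lambda^+=\bigl((p_{\lambda}^{-}+p_{\lambda}^{+})e_0,(\xi_-(\lambda)p_{\lambda}^{-}+\xi_+(\lambda)p_{\lambda}^{+})e_0\bigr)$ gives the system $p_{\lambda}^{-}+p_{\lambda}^{+}=u$, $\xi_-(\lambda)p_{\lambda}^{-}+\xi_+(\lambda)p_{\lambda}^{+}=v$, whose determinant is $\xi_+(\lambda)-\xi_-(\lambda)\neq0$ because the roots are distinct when $\lambda\neq(2/\alpha)^2$. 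Solving yields exactly the stated $p_{\lambda}^{\pm}$, and simultaneously confirms, via Proposition \ref{thm_general_properties_of_A}(iv), that $\mathbf{e}_\lambda^{\pm}$ are eigenvectors of $\mathbf{A}$ for $\xi_{\pm}(\lambda)$.

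The decisive observation is the shape of the norm on $\mathbf{X}_{e_0}$. Since $Ae_0=\lambda e_0$ and $\|e_0\|=1$, one has $\|e_0\|_{1/2}^2=\|A^{1/2}e_0\|^2=\lambda$, so that
$$\|(ue_0,ve_0)\|_{\mathbf{X}}^2=|u|^2\|e_0\|_{1/2}^2+|v|^2\|e_0\|^2=\lambda|u|^2+|v|^2,\qquad \|\mathbf{e}_\lambda^{\pm}\|_{\mathbf{X}}^2=\lambda+|\xi_{\pm}(\lambda)|^2.$$
Because $\mathbf{P}_\lambda^-$ has one-dimensional range $\mathbb{C}\cdot\mathbf{e}_\lambda^-$, its operator norm is
$$\|\mathbf{P}_\lambda^-\|=\sup_{(u,v)\neq0}\frac{|p_{\lambda}^{-}(u,v)|\,\|\mathbf{e}_\lambda^-\|}{\sqrt{\lambda|u|^2+|v|^2}}=\frac{\|\mathbf{e}_\lambda^-\|}{|\xi_+(\lambda)-\xi_-(\lambda)|}\sup_{(u,v)\neq0}\frac{|\xi_+(\lambda)u-v|}{\sqrt{\lambda|u|^2+|v|^2}}.$$
Here I would substitute $a=\sqrt\lambda\,u$, $b=v$, turning the remaining supremum into $\max\{|(\xi_+(\lambda)/\sqrt\lambda)a-b|:|a|^2+|b|^2=1\}$, which by Lemma \ref{elementary-max}(i) equals $\sqrt{1+|\xi_+(\lambda)|^2/\lambda}$. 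Hence
$$\|\mathbf{P}_\lambda^-\|=\frac{\sqrt{\lambda+|\xi_-(\lambda)|^2}\,\sqrt{\lambda+|\xi_+(\lambda)|^2}}{\sqrt\lambda\,|\xi_+(\lambda)-\xi_-(\lambda)|},$$
an expression that is symmetric in $\xi_-$ and $\xi_+$; the identical computation for $\mathbf{P}_\lambda^+$ (using $p_{\lambda}^{+}$ and $\mathbf{e}_\lambda^+$) returns the same value, so $\|\mathbf{P}_\lambda^-\|=\|\mathbf{P}_\lambda^+\|$ for free.

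It remains to simplify this closed form in the two spectral regimes using the Vieta relations $\xi_-(\lambda)\xi_+(\lambda)=\lambda$ and $\xi_-(\lambda)+\xi_+(\lambda)=-\alpha\lambda$ of Corollary \ref{cor_properties_of_A-characterization}(i). When $\lambda>(2/\alpha)^2$ the roots are real and negative, so $\lambda+\xi_{\pm}(\lambda)^2=\xi_{\pm}(\lambda)(\xi_-(\lambda)+\xi_+(\lambda))=-\alpha\lambda\,\xi_{\pm}(\lambda)>0$; multiplying the two factors and using $\xi_-\xi_+=\lambda$ together with $|\xi_+(\lambda)-\xi_-(\lambda)|=\sqrt{\Delta(\lambda)}=\sqrt{\alpha^2\lambda^2-4\lambda}$ collapses the numerator to $\alpha\lambda^{3/2}$ and gives $\|\mathbf{P}_\lambda^-\|=\alpha\lambda/\sqrt{\alpha^2\lambda^2-4\lambda}=1/\sqrt{1-(2/\alpha)^2/\lambda}$. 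When $0<\lambda<(2/\alpha)^2$ the roots are complex conjugates, so $|\xi_-(\lambda)|^2=|\xi_+(\lambda)|^2=\xi_-(\lambda)\xi_+(\lambda)=\lambda$, whence $\lambda+|\xi_{\pm}(\lambda)|^2=2\lambda$ and $|\xi_+(\lambda)-\xi_-(\lambda)|=\sqrt{-\Delta(\lambda)}=\sqrt{4\lambda-\alpha^2\lambda^2}$, and therefore $\|\mathbf{P}_\lambda^-\|=2\lambda/(\sqrt\lambda\sqrt{4\lambda-\alpha^2\lambda^2})=2/\sqrt{4-\alpha^2\lambda}=1/\sqrt{1-\lambda/(2/\alpha)^2}$.

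The genuinely load-bearing step, and the place where an error is easiest to make, is the identification of the weighted norm $\lambda|u|^2+|v|^2$ on $\mathbf{X}_{e_0}$ coming from the $X^{1/2}$-factor, together with the matching substitution $a=\sqrt\lambda\,u$ that brings the supremum into exactly the form demanded by Lemma \ref{elementary-max}(i); everything after that is bookkeeping with the Vieta relations and the discriminant $\Delta(\lambda)=\alpha^2\lambda^2-4\lambda$. I would also double-check the modulus conventions when applying the Lemma (the immaterial sign of $b$, and the complex-conjugate case), since these are the only subtleties that could silently spoil the final algebraic matching.
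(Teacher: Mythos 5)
Your proposal is correct and follows essentially the same route as the paper: verify \eqref{change-of-variable} by solving the $2\times 2$ linear system, compute the operator norm of the rank-one projection via the weighted norm $\lambda|u|^2+|v|^2$ on ${\bold X}_{e_0}$ and Lemma \ref{elementary-max}(i), and then simplify the resulting symmetric expression with the Vieta relations $\xi_-\xi_+=\lambda$, $\xi_-+\xi_+=-\alpha\lambda$ in the two spectral regimes. Your remark that the closed form is symmetric in $\xi_\pm$, so $\|{\bold P}_\lambda^+\|=\|{\bold P}_\lambda^-\|$ comes for free, is a minor tidying of the paper's ``the computation for ${\bold P}_{\lambda}^{+}$ is similar,'' but the substance is identical.
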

\begin{proof}
The equality \eqref{change-of-variable} can be verified by a direct algebraic computation.\\ 
\indent By use of Lemma \ref{elementary-max} (i) one has
\begin{align*}
\| {\bold P}_{\lambda}^{-} \|& = \max\left\{ \| {\bold P}_{\lambda}^{-}(u\cdot e_0, v\cdot e_0)\|_{\bold X}\, \mid  
\|(u\cdot e_0, v\cdot e_0)\|_{\bold X} = 1 \right\}\\
& = \max\left\{ |p_{\lambda}^{-} (u,v)|\cdot\|(e_0,\xi_-(\lambda) \cdot e_0)\|_{\bold X}  \, \mid\, \lambda |u|^2+|v|^2=1 \right\}\\
& =\max\left\{ |(\xi_+(\lambda)/\sqrt{\lambda}) z_1 - z_2| \, \mid \, |z_1|^2+|z_2|^2=1 \right\} \cdot \sqrt{\lambda +|\xi_-(\lambda)|^2} \cdot |\xi_+(\lambda)-\xi_-(\lambda) |^{-1} \\
& =  \sqrt{1+ |\xi_{+}(\lambda)|^2/\lambda}\cdot \sqrt{\lambda +|\xi_-(\lambda)|^2} \cdot |\xi_+(\lambda)-\xi_-(\lambda) |^{-1}.
\end{align*}
Hence, if $\lambda> (2/\alpha)^2$ then $\xi_-(\lambda)$, $\xi_+(\lambda)\in\mathbb{R}$ and, in view of (i) in Corollary \ref{cor_properties_of_A-characterization}, one gets  
\begin{align*}
\| {\bold P}_{\lambda}^{-} \|^2 & =
(1+ \xi_{+}(\lambda)^2/\lambda)\cdot (\lambda +\xi_-(\lambda)^2)/ (\xi_+(\lambda)-\xi_-(\lambda) )^{2}\\
& = \lambda^{-1} \cdot (\xi_+(\lambda)\xi_-(\lambda) + \xi_+(\lambda)^2)\cdot (\xi_+(\lambda)\xi_-(\lambda) + \xi_-(\lambda)^2) / \Delta(\lambda)\\
& = (\xi_-(\lambda) +\xi_+(\lambda))^2/ \Delta(\lambda)= (\alpha \lambda)^2 / ((\alpha\lambda)^2-4\lambda)=
\frac{1}{1-(2/\alpha)^2/\lambda}.
\end{align*}
In the same way one computes the norm of ${\bold P}_{\lambda}^{+}$. When $0 < \lambda < (2/\alpha)^2$, then $\xi_+(\lambda) = \overline{\xi_-(\lambda)}$ and
\begin{align*}
\| {\bold P}_{\lambda}^{-} \|^ 2& =
(1+ |\xi_{+}(\lambda)|^2/\lambda)\cdot (\lambda +|\xi_-(\lambda)|^2)/ |\xi_+(\lambda)-\xi_-(\lambda) |^{2}\\
& = 4\lambda / (4\lambda - (\alpha\lambda)^2) =
\frac{1}{1-\lambda/(2/\alpha)^2}.
\end{align*}
As before, the computation for ${\bold P}_{\lambda}^{+}$ is similar. 
\end{proof}
\begin{prop}\label{two-dim-operators}
Suppose that $\lambda\in\sigma_p(A)$, $e_0\in\mathrm{Ker}\, (A-\lambda I)$ with $\|e_0\|=1$ and ${\bold X}_{e_0}:=\mathrm{span}\left\{(e_0,0),(0,e_0)\right\}$ and let ${\bold A}_{e_0}:{\bold X}_{e_0}\to {\bold X}_{e_0}$ be the restriction of ${\bold A}$ to ${\bold X}_{e_0}$.
Then
$$
\|{\bold A}_{e_0}\| = (2/\alpha) \cdot g\left( (2/\alpha)^2/\lambda \right)
$$
where $g:(0,+\infty)\to (0,+\infty)$ is given by
$$
g(r):= \frac{1+\sqrt{1+r}}{r}, \, r>0.
$$
\end{prop}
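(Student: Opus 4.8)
The plan is to reduce the operator norm of ${\bold A}_{e_0}$ to a two-variable extremal problem over $\mathbb{C}^2$ that fits Lemma~\ref{elementary-max}(ii). First I would parametrize the two-dimensional space ${\bold X}_{e_0}$ by coordinates $(u,v)\in\mathbb{C}^2$ via the identification $(u,v)\mapsto (u\cdot e_0, v\cdot e_0)$. Since $Ae_0=\lambda e_0$ and $\lambda>0$, one has $A^{1/2}e_0=\sqrt{\lambda}\,e_0$, so that $\|u\cdot e_0\|_{1/2}=\sqrt{\lambda}\,|u|$ and $\|v\cdot e_0\|=|v|$; consequently the ${\bold X}$-norm takes the diagonal form
$$
\|(u\cdot e_0,\, v\cdot e_0)\|_{\bold X}^2 = \lambda|u|^2 + |v|^2 .
$$

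Next I would compute the action of ${\bold A}_{e_0}$ in these coordinates. Because $e_0$ is an eigenvector we have $(u\cdot e_0, v\cdot e_0)\in D({\bold A})$ and
$$
{\bold A}(u\cdot e_0,\, v\cdot e_0) = \bigl(v\cdot e_0,\ -\lambda(\alpha v+u)\cdot e_0\bigr),
$$
so ${\bold A}_{e_0}$ indeed preserves ${\bold X}_{e_0}$ and the square of the image norm equals $\lambda|v|^2+\lambda^2|u+\alpha v|^2$. Thus $\|{\bold A}_{e_0}\|^2$ is the maximum of $\lambda|v|^2+\lambda^2|u+\alpha v|^2$ subject to the constraint $\lambda|u|^2+|v|^2=1$.

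Then I would normalize the constraint by setting $z_1:=\sqrt{\lambda}\,u$ and $z_2:=v$, which turns the constraint into $|z_1|^2+|z_2|^2=1$ and the objective into $\lambda\bigl(|z_2|^2+|z_1+\alpha\sqrt{\lambda}\,z_2|^2\bigr)$. This is exactly the form handled by Lemma~\ref{elementary-max}(ii): relabeling $(z_2,z_1)$ as the $(z_1,z_2)$ of that lemma and choosing $z=\alpha\sqrt{\lambda}/2$, the maximum equals $(|z|+\sqrt{1+|z|^2})^2$. Hence
$$
\|{\bold A}_{e_0}\|^2 = \lambda\Bigl(\tfrac{\alpha\sqrt{\lambda}}{2}+\sqrt{1+\tfrac{\alpha^2\lambda}{4}}\Bigr)^{2}.
$$

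Finally I would identify this with the claimed closed form. Writing $a:=\alpha\sqrt{\lambda}/2$, a short computation gives $\lambda(a+\sqrt{1+a^2})^2=\lambda a^2\bigl(1+\sqrt{1+1/a^2}\bigr)^2$, and substituting $1/a^2=(2/\alpha)^2/\lambda$ together with $\lambda a^2=\alpha^2\lambda^2/4$ rearranges the right-hand side to $(2/\alpha)^2\,g\bigl((2/\alpha)^2/\lambda\bigr)^2$, yielding the assertion. The only mildly delicate point is matching the quadratic form to Lemma~\ref{elementary-max}(ii) with the correct choice of $z$ and the correct roles of the two coordinates after the change of variables; once that identification is made, the passage to $g$ is routine algebra.
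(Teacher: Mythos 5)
Your proposal is correct and follows essentially the same route as the paper: compute $\|{\bold A}(u\cdot e_0,v\cdot e_0)\|_{\bold X}^2=\lambda|v|^2+\lambda^2|u+\alpha v|^2$ on the unit sphere $\lambda|u|^2+|v|^2=1$, normalize coordinates to reduce to Lemma \ref{elementary-max}(ii) with $z=\alpha\sqrt{\lambda}/2$, and rewrite the resulting maximum in terms of $r=(2/\alpha)^2/\lambda$ to obtain $(2/\alpha)^2 g(r)^2$. The only difference from the paper's proof is the cosmetic relabeling of the coordinates before invoking the lemma; the algebraic identification at the end checks out.
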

\begin{proof}
Observe that, for any $u,v\in \mathbb{C}$,
$$
{\bold A}(u\cdot e_0,v\cdot e_0) = (v\cdot e_0, -A(u\cdot e_0 + \alpha v\cdot e_0))=
(v\cdot e_0, -\lambda(u+\alpha v)\cdot e_0).
$$
Hence, by use of Lemma \ref{elementary-max} (ii),
\begin{align*}
\|{\bold A}_{e_0}\|^2 & = \max \{ \|{\bold A}(u\cdot e_0,v\cdot e_0) \|_{\bold X}^{2} \mid \| (u\cdot e_0,v\cdot e_0)\|_{\bold X} = 1 \} \\ 
&  = \lambda \cdot \max\{ |v|^2 + \lambda |u+\alpha v|^2 \mid \lambda |u|^2 + |v|^2 = 1\} \\
& = \lambda \cdot \max \{ |z_1|^2 + |\lambda^{1/2} \alpha z_1 + z_2|^2 \, \mid \, |z_1|^2 +|z_2|^2=1  \}\\
& = \lambda (\lambda^{1/2}\alpha/2 + \sqrt{1+\lambda (\alpha/2)^2})^2\\
& = (2/\alpha)^2 \left( \lambda(\alpha/2)^2 + \sqrt{\lambda(\alpha/2)^2 + \lambda^2(\alpha/2)^4} \right)^2 = (2/\alpha)^2 \cdot
\left(r^{-1} + \sqrt{r^{-1}+r^{-2}}\right)^2
\end{align*}
where $r= (2/\alpha)^2/\lambda$.
\end{proof}
\section{Spectral decomposition and property $(UHBD)$ for hyperbolic operator}
Assume that $A:D(A)\subset X \to X$ is a self-adjoint operator in a separable Hilbert space $X$ (endowed with the scalar product $\langle .,.\rangle$ and norm $\|.\|$) with the spectrum $\sigma(A)$ consisting of positive eigenvalues $\lambda_k$, $k\geq 1$, of finite multiplicities such that $(\lambda_k)$ is nondecreasing and $\lambda_k \to \infty$ as $k\to \infty$. The corresponding eigenvectors we denote by $e_k$, $k\geq 1$. Then it is clear (see Theorem 13.36 in \cite{Rudin}) that $D(A)\neq X$ and therefore, in view of Lemma \ref{basic_lemma}, $-1/\alpha \in \sigma({\bold A})$. In view of Corollary \ref{cor_properties_of_A-characterization} we infer that
$$
\sigma (\bold A)  = 
\left\{ -1/\alpha \right\} \cup \bigcup_{k = 1}^\infty \left\{ \xi_{k}^{-},\ \xi_{k}^{+} \right\}
$$
where $\xi_{k}^{-}:=\xi_-(\lambda_k)$, $\xi_{k}^{+}:=\xi_+ (\lambda_k)$.  It is also clear that if $\xi_{k}^{-}$, $\xi_{k}^{+} \in \mathbb{R}$ then
$$
\xi_{k}^{-} < - 2/\alpha < \xi_{k}^{+} < -1/\alpha,
$$
$\xi_{k}^{-} \rightarrow  - \infty$  and $\xi_{k}^{+} \rightarrow -1/\alpha$. Moreover, the set $\sigma({\bold A})\setminus \mathbb{R}$ is finite and
$$
\sigma(\bold A) \setminus \mathbb{R} \subset
\{z\in\mathbb{C}\,\mid\,  -2/\alpha < \mathrm{Re}\, z< 0\}.
$$
Using the notation from Proposition \ref{one-dim-projections} we define
$$
{\bold e}_{k}^{-}:= {\bold e}_{\lambda_k}^{-} = (e_k, \xi_{k}^{-} \cdot e_k), \ \ \ 
{\bold e}_{k}^{+}:= {\bold e}_{\lambda_k}^{+} = (e_k, \xi_{k}^{+} \cdot e_k), \ \ \  k\in \mathbb{N}\setminus N^0,
$$
where  $N^{0} := \{k\in \mathbb{N}\, \mid\, \lambda_k = (2/\alpha)^2\}$ (it is clearly a finite set) and 
$$
{\bold e}_{k}^{-}:= (e_k, 0),  \ \ \ {\bold e}_{k}^{+}:=(0,e_k), \  \ \ k\in N^0. 
$$
Observe that, in view of \eqref{change-of-variable},
\begin{align*}
{\bold X} & = \overline{\bigoplus_{k\in \mathbb{N}} \  \mathrm{span}\, \left\{(e_k,0), (0,e_k)\right\}} = \overline{\bigoplus_{k\in \mathbb{N}\setminus N^0}
\mathrm{span}\, \left\{ {\bold e}_{k}^{-}, {\bold e}_{k}^{+} \right\}} \ \oplus {\bold X}^0
\end{align*}
and
$$
{\bold X}^{0}:= \bigoplus_{k\in N^0} \mathrm{span}\,\left\{ {\bold e}_{k}^{-}, {\bold e}_{k}^{+} \right\}.
$$
Obviously, for any $k, l \in \mathbb{N}$ such that $k\neq l$,
$$
\mathrm{span} \left\{ {\bold e}_{k}^{-}, {\bold e}_{k}^{+} \right\} 
\perp
\mathrm{span} \left\{ {\bold e}_{l}^{-}, {\bold e}_{l}^{+} \right\}.   
$$
\begin{prop}\label{series-in-boldX}
For any $(u,v)\in {\bold X}$
$$
(u,v) = \sum_{k=1}^{\infty} p_{k}^{-}(u_k, v_k)\cdot {\bold e}_{k}^{-}
+ \sum_{k=1}^{\infty} p_{k}^{+}(u_k, v_k)\cdot{\bold e}_{k}^{+} \mbox{ in } {\bold X}
$$
where  $p_{k}^{-}:=p_{\lambda_k}^{-}$, 
$p_{k}^{+}:=p_{\lambda_k}^{+}$ if $k\in \mathbb{N} \setminus N^0$ and $p_{k}^{-}(z_1,z_2):=z_1$, $p_{k}^{+}(z_1, z_2):=z_2$ if $k\in N^0$ and for any $k\in\mathbb{N}$ $u_k:=\langle u,e_k\rangle$, $v_k:=\langle v,e_k\rangle$.
In consequence
$$
{\bold X}  =  \overline{\bigoplus_{k\in \mathbb{N}\setminus N^0} \mathbb{C}\cdot {\bold e}_{k}^{-}} \ \oplus \ \overline{\bigoplus_{k\in \mathbb{N}\setminus N^0} \mathbb{C}\cdot {\bold e}_{k}^{+}} \ \oplus \  {\bold X}^{0}. 
$$
\end{prop}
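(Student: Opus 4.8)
The plan is to reduce everything to the mutually orthogonal two-dimensional blocks ${\bold X}_{e_k}=\mathrm{span}\{(e_k,0),(0,e_k)\}$ and to isolate the one genuinely non-trivial point, namely a convergence estimate fed by the projection-norm bounds of Proposition \ref{one-dim-projections}. First I would record that, since $A$ is self-adjoint with spectrum consisting of eigenvalues of finite multiplicity tending to $+\infty$, the eigenvectors $(e_k)$ may be taken orthonormal and then form an orthonormal basis of $X$ (Theorem 13.36 in \cite{Rudin}). Hence every $(u,v)\in{\bold X}$ has coordinates $u_k=\langle u,e_k\rangle$, $v_k=\langle v,e_k\rangle$ with
$$
\|(u,v)\|_{\bold X}^2=\sum_{k=1}^\infty\bigl(\lambda_k|u_k|^2+|v_k|^2\bigr)=\sum_{k=1}^\infty\|(u_ke_k,v_ke_k)\|_{\bold X}^2<\infty ,
$$
the blocks ${\bold X}_{e_k}$ are pairwise orthogonal in ${\bold X}$, and the partial sums $\sum_{k\leq N}(u_ke_k,v_ke_k)$ converge to $(u,v)$ in ${\bold X}$.

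Next, for each fixed $k$ I would rewrite the $k$-th block using \eqref{change-of-variable} when $k\notin N^0$ and directly from the definitions of $p_k^\pm$ and ${\bold e}_k^\pm$ when $k\in N^0$, obtaining in both cases
$$
(u_ke_k,v_ke_k)=p_k^-(u_k,v_k)\cdot{\bold e}_k^-+p_k^+(u_k,v_k)\cdot{\bold e}_k^+ .
$$
Summing over $k\leq N$ already gives the identity with finite sums; the only real issue is that the two series $\sum_k p_k^-(u_k,v_k){\bold e}_k^-$ and $\sum_k p_k^+(u_k,v_k){\bold e}_k^+$ converge \emph{separately} in ${\bold X}$. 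Here I would invoke the norm computations of Proposition \ref{one-dim-projections}: since $\|{\bold P}_{\lambda_k}^\pm\|\to1$ as $k\to\infty$ and only finitely many $\lambda_k$ are $\leq(2/\alpha)^2$, the constant $C:=\sup_k\|{\bold P}_{\lambda_k}^\pm\|$ is finite (with the orthogonal projections of norm $1$ on the finitely many blocks $k\in N^0$). Because distinct blocks are orthogonal and $p_k^-(u_k,v_k){\bold e}_k^-={\bold P}_{\lambda_k}^-(u_ke_k,v_ke_k)$, I would estimate the tails by
$$
\Bigl\|\sum_{k=M}^{N}p_k^-(u_k,v_k){\bold e}_k^-\Bigr\|_{\bold X}^2=\sum_{k=M}^{N}\|{\bold P}_{\lambda_k}^-(u_ke_k,v_ke_k)\|_{\bold X}^2\leq C^2\sum_{k=M}^{N}\|(u_ke_k,v_ke_k)\|_{\bold X}^2,
$$
which tends to $0$ as $M,N\to\infty$ by the summability above. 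Thus the $-$ series (and likewise the $+$ series) is Cauchy in the Hilbert space ${\bold X}$, hence convergent, and passing to the limit in the finite-sum identity yields the asserted representation.

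Finally, for the consequence I would read off ${\bold X}={\bold X}^-+{\bold X}^++{\bold X}^0$ from the representation, where ${\bold X}^\mp:=\overline{\bigoplus_{k\notin N^0}\mathbb{C}\,{\bold e}_k^\mp}$, and show the sum is a topological direct sum of closed subspaces. The block ${\bold X}^0$ is built solely from the $k\in N^0$ blocks, hence orthogonal to both ${\bold X}^-$ and ${\bold X}^+$, so it splits off orthogonally and it suffices to prove ${\bold X}^-\cap{\bold X}^+=\{0\}$. Given $a\in{\bold X}^-$ and $b\in{\bold X}^+$ with $a+b=0$, I would project onto each ${\bold X}_{e_k}$; by orthogonality across $k$ this projection is $a_k{\bold e}_k^-+b_k{\bold e}_k^+=0$, and since $\xi_k^-\neq\xi_k^+$ for $k\notin N^0$ (Corollary \ref{cor_properties_of_A-characterization}) the vectors ${\bold e}_k^-,{\bold e}_k^+$ are linearly independent, forcing $a_k=b_k=0$ for all $k$ and hence $a=b=0$. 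The uniform bound $C$ guarantees that the coordinate projections are bounded, so the decomposition is a genuine topological direct sum.

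The main obstacle is exactly the separate (rather than merely joint) convergence of the $-$ and $+$ series: the per-block splitting into $\mathbb{C}{\bold e}_k^-\oplus\mathbb{C}{\bold e}_k^+$ is \emph{not} orthogonal, so one cannot sum squared norms coefficient-by-coefficient inside a block, and it is precisely the uniform boundedness of $\|{\bold P}_{\lambda_k}^\pm\|$ — the reason Proposition \ref{one-dim-projections} was established first — that rescues the estimate. Everything else is orthogonality bookkeeping across the blocks.
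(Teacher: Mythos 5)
Your argument is correct and follows essentially the same route as the paper: both reduce to the pairwise orthogonal blocks $\mathrm{span}\{(e_k,0),(0,e_k)\}$, apply the identity \eqref{change-of-variable} blockwise, and use the norm bounds of Proposition \ref{one-dim-projections} (which tend to $1$ as $\lambda_k\to\infty$, hence are uniformly bounded) to get separate convergence of the two series. Your write-up is somewhat more explicit than the paper's — spelling out the Cauchy tail estimate and the directness of the sum in the final displayed decomposition, which the paper leaves implicit — but there is no substantive difference in method.
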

\begin{proof}
Applying Proposition \ref{one-dim-projections}, for large $k$, we obtain
\begin{align*}
\|p_{k}^{-} (u_k, v_k) \cdot{\bold e}_{k}^{-} \|_{\bold X}^{2} & \leq 
1/(1-(2/\alpha)^2/\lambda_k)\cdot \|(u_k \cdot e_k,v_k \cdot e_k)\|_{\bold X}^{2}\\
& = 1/(1-(2/\alpha)^2/\lambda_k) (\lambda_k |u_k|^2+|v_k|^2).
\end{align*}
The same estimate we get for $\|p_{k}^{+} (u_k, v_k)\cdot {\bold e}_{k}^{+} \|_{\bold X}^{2}$. Since, $1/(1-(2/\alpha)^2/\lambda_k)\to 1$ as $k\to\infty$ and  the series with terms $\lambda_k |u_k|^2 + |v_k|^2$ is convergent, we see that the series with the terms $\|p_{k}^{-} (u_k, v_k) \cdot{\bold e}_{k}^{-} \|_{\bold X}^{2}$ and $\|p_{k}^{+} (u_k, v_k)\cdot {\bold e}_{k}^{+} \|_{\bold X}^{2}$ are convergent as well.
\end{proof}
Now take any $\mu > 2/\alpha$ and consider the spectral decomposition into 
$$
{\bold X} = {\bold X}_{\mu}^{-}  \oplus  {\bold X}_{\mu}^{+} 
$$
corresponding to the spectral sets (see \cite{Dunford-Schwartz})
$\sigma_{\mu}^{-}$ and $\sigma_{\mu}^{+}$  given by
$$
\sigma_{\mu}^{-}:= \{z\in \sigma({\bold A}) \,\mid\,  \mathrm{Re}\, z \leq - \mu \}, \ \   \sigma_{\mu}^{+}:= \sigma({\bold A}) \setminus \sigma_{\mu}^{-}. 
$$
The following proposition, being a straightforward consequence of Lemma \ref{basic_lemma}, Corollary \ref{cor_properties_of_A-characterization}
and Proposition \ref{series-in-boldX}, provides explicitly the components of the above decomposition.
\begin{prop}
If $\mu>2/\alpha$ then
\begin{align*}
{\bold  X}_\mu^{-}  &  =   \overline{\bigoplus\limits_{k\in N_\mu^{-}} \mathbb{C} \cdot
{\bold e}_{k}^{-}},\\
{\bold  X}_\mu^{+} & = \bigoplus\limits_{k\in \mathbb{N}\setminus (N_\mu^{-}\cup N^0)}    
\mathbb{C} \cdot {\bold e}_{k}^{-} \oplus \overline{\bigoplus\limits_{k\in \mathbb{N}\setminus N^0 }\mathbb{C} \cdot {\bold e}_{k}^{+}} \oplus {\bold X}^{0}. 
\end{align*}
where 
$N_{\mu}^{-} :=\{ k\in \mathbb{N} \,\mid \, \lambda_k>(2/\alpha)^2,\, \xi_{k}^{-}\leq -\mu \}.$
\end{prop}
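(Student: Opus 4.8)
The plan is to identify the Riesz--Dunford spectral subspaces ${\bold X}_\mu^{\pm}$ attached to the spectral sets $\sigma_\mu^{\pm}$ (as in \cite{Dunford-Schwartz}) with the concrete closed spans on the right-hand sides. First I would record that, for the fixed $\mu>2/\alpha$, the two spectral sets are genuinely separated. By Corollary \ref{cor_properties_of_A-characterization} the point $-1/\alpha$ and every $\xi_k^{+}$ satisfy $\mathrm{Re}\, z > -2/\alpha > -\mu$, the finitely many nonreal eigenvalues lie in $\{-2/\alpha < \mathrm{Re}\, z < 0\}$, and among the real $\xi_k^{-}$ only finitely many exceed $-\mu$ (since $\xi_-$ is decreasing with $\xi_k^{-}\to-\infty$). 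Hence $\sigma_\mu^{+}$ is bounded and sits at a positive distance from the unbounded set $\sigma_\mu^{-}$, so the Riesz projections $P_\mu^{\pm}$ are well defined and yield ${\bold X} = {\bold X}_\mu^{-}\oplus {\bold X}_\mu^{+}$.

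Next I would classify each spectral point. Because $\mu>2/\alpha$, the inequality $\mathrm{Re}\, z \leq -\mu$ forces $z$ to be real with $z < -2/\alpha$; by Corollary \ref{cor_properties_of_A-characterization}(iii),(v) this happens precisely when $z=\xi_k^{-}$ with $\lambda_k>(2/\alpha)^2$ and $\xi_k^{-}\leq-\mu$, i.e.\ for $k\in N_\mu^{-}$. Thus $\sigma_\mu^{-} = \{\xi_k^{-} \mid k\in N_\mu^{-}\}$, while $\sigma_\mu^{+}$ collects the remaining $\xi_k^{-}$ (those with $k\notin N_\mu^{-}\cup N^0$), all $\xi_k^{+}$ with $k\notin N^0$, the degenerate value $-2/\alpha$ carried by the blocks $k\in N^0$, and the point $-1/\alpha$.

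The decisive step is to transfer this classification to the subspaces through the block structure. By Lemma \ref{basic_lemma} each two-dimensional space ${\bold X}_{e_k}:=\mathrm{span}\{(e_k,0),(0,e_k)\}$ is invariant under ${\bold A}$ and under every resolvent $(\xi I - {\bold A})^{-1}$ with $\xi\notin\sigma({\bold A})$, so the projections $P_\mu^{\pm}$ act block-diagonally and on ${\bold X}_{e_k}$ reduce to the finite-dimensional spectral projections of the $2\times 2$ block. For $k$ with $\lambda_k\neq(2/\alpha)^2$ this block has the two eigenvectors ${\bold e}_k^{\pm}$, and the contour enclosing $\sigma_\mu^{+}$ captures ${\bold e}_k^{-}$ precisely when $k\notin N_\mu^{-}$ and always captures ${\bold e}_k^{+}$; for $k\in N^0$ the two-dimensional block (a Jordan block for the eigenvalue $-2/\alpha > -\mu$) is entirely captured by $P_\mu^{+}$, contributing ${\bold X}^0$. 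Summing over $k$ and passing to the closure---legitimate because Proposition \ref{series-in-boldX} exhibits the orthogonal block decomposition of ${\bold X}$ with the uniformly bounded projection norms of Proposition \ref{one-dim-projections} tending to $1$---yields exactly the stated formulas for ${\bold X}_\mu^{-}$ and ${\bold X}_\mu^{+}$.

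The main obstacle I anticipate is this last transfer step: since ${\bold A}$ is not self-adjoint one cannot invoke the spectral theorem, so the identification of the abstract Riesz subspaces with the concrete closed spans must rest entirely on the block-diagonal reduction and on the uniform control $\|{\bold P}_{\lambda_k}^{\pm}\| = (1-(2/\alpha)^2/\lambda_k)^{-1/2}\to 1$, which guarantees that the infinite direct sums converge and stay closed. Verifying the block invariance of the resolvent and the convergence of the resulting series is the only genuinely technical part; the eigenvalue bookkeeping is immediate from Corollary \ref{cor_properties_of_A-characterization}.
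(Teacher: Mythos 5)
Your argument is correct and follows exactly the route the paper intends: the paper gives no written proof, stating only that the proposition is a straightforward consequence of Lemma \ref{basic_lemma}, Corollary \ref{cor_properties_of_A-characterization} and Proposition \ref{series-in-boldX}, and your block-diagonal reduction of the Riesz projections combined with the eigenvalue bookkeeping and the uniform bounds $\|{\bold P}_{\lambda_k}^{\pm}\|\to 1$ is precisely the fleshed-out version of that remark. No gaps.
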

Next we shall estimate the norms of the projections ${\bold P}_{\mu}^{-}$ and ${\bold P}_{\mu}^{+}$ in ${\bold X}$ onto ${\bold X}_{\mu}^{-}$ and ${\bold X}_{\mu}^{+}$, respectively.
\begin{prop}\label{to-obtain-(D)-property-1}
If $\mu>2/\alpha$ then
$$
\|{\bold P}_{\mu}^{-}\|_{{\cal L}({\bold X}, {\bold X})} \leq  (1- (2/\alpha)/\mu)^{-1}  \ \mbox{ and } \ 
\|{\bold P}_{\mu}^{+}\|_{{\cal L}({\bold X}, {\bold X})} \leq  (1- (2/\alpha)/\mu)^{-1}.
$$
\end{prop}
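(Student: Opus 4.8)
The plan is to exploit the orthogonal block decomposition ${\bold X} = \overline{\bigoplus_{k\in\mathbb{N}} {\bold X}_{e_k}}$, where ${\bold X}_{e_k} := \mathrm{span}\{(e_k,0),(0,e_k)\}$ and the blocks are mutually orthogonal. Since each basis vector ${\bold e}_k^-$ and ${\bold e}_k^+$ lies in the single block ${\bold X}_{e_k}$, the descriptions of ${\bold X}_\mu^-$ and ${\bold X}_\mu^+$ from the preceding proposition show that both ${\bold P}_\mu^-$ and ${\bold P}_\mu^+$ leave every ${\bold X}_{e_k}$ invariant; that is, they are block-diagonal with respect to this orthogonal sum. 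An operator that preserves each summand of an orthogonal Hilbert-space direct sum, with uniformly bounded restrictions, is bounded with norm equal to the supremum of the restriction norms, so it suffices to estimate these finite-dimensional block restrictions and take a supremum over $k$.

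First I would classify the blocks according to where their basis vectors land. A block ${\bold X}_{e_k}$ splits between ${\bold X}_\mu^-$ and ${\bold X}_\mu^+$ precisely when $k\in N_\mu^-$, that is when $\lambda_k>(2/\alpha)^2$ and $\xi_k^-\le-\mu$; then $\xi_k^\pm$ are real and $\xi_k^+\in(-2/\alpha,-1/\alpha)$ lies to the right of $-\mu$ (as $\mu>2/\alpha$), by Corollary \ref{cor_properties_of_A-characterization}(iii). Hence ${\bold e}_k^-\in{\bold X}_\mu^-$ while ${\bold e}_k^+\in{\bold X}_\mu^+$, and on this block ${\bold P}_\mu^-$ and ${\bold P}_\mu^+$ coincide with the projections ${\bold P}_{\lambda_k}^-$ and ${\bold P}_{\lambda_k}^+$ of Proposition \ref{one-dim-projections}, each of norm $1/\sqrt{1-(2/\alpha)^2/\lambda_k}$. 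For every other $k$ — namely $k\in N^0$, or $0<\lambda_k<(2/\alpha)^2$ where $\mathrm{Re}\,\xi_k^\pm>-2/\alpha>-\mu$, or $\lambda_k>(2/\alpha)^2$ with $\xi_k^->-\mu$ — the entire block lies in ${\bold X}_\mu^+$, so there ${\bold P}_\mu^-$ restricts to $0$ and ${\bold P}_\mu^+$ to the identity, of norm $1$.

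It then remains to bound $1/\sqrt{1-(2/\alpha)^2/\lambda_k}$ uniformly over $k\in N_\mu^-$, which is the crux. Writing $t_k := \sqrt{1-(2/\alpha)^2/\lambda_k}\in(0,1)$ and using the explicit formula $\xi_k^- = -(2/\alpha)/(1-t_k)$ from Corollary \ref{cor_properties_of_A-characterization}, the defining condition $\xi_k^-\le -\mu$ rearranges to $1-t_k \le (2/\alpha)/\mu$, i.e. $t_k \ge 1-(2/\alpha)/\mu$, whence $1/t_k \le (1-(2/\alpha)/\mu)^{-1}$ for every $k\in N_\mu^-$. Taking the supremum over all blocks, and noting that the trivial blocks contribute a factor $\le 1\le(1-(2/\alpha)/\mu)^{-1}$, yields simultaneously $\|{\bold P}_\mu^-\|_{{\cal L}({\bold X},{\bold X})}\le(1-(2/\alpha)/\mu)^{-1}$ and $\|{\bold P}_\mu^+\|_{{\cal L}({\bold X},{\bold X})}\le(1-(2/\alpha)/\mu)^{-1}$.

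The main obstacle is the bookkeeping that reduces the global operator norms to a supremum of block norms: one must verify the orthogonality of the blocks and that both oblique projections are genuinely block-diagonal, so that no cross terms inflate the norm. Once this is in place, everything reduces to the single-block computation already carried out in Proposition \ref{one-dim-projections} (crucially, that $\|{\bold P}_{\lambda_k}^-\|=\|{\bold P}_{\lambda_k}^+\|$, which is what lets the same bound serve for both ${\bold P}_\mu^-$ and ${\bold P}_\mu^+$) together with the elementary inversion of the formula for $\xi_k^-$.
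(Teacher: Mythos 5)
Your proposal is correct and follows essentially the same route as the paper: decompose $\bold X$ into the mutually orthogonal two-dimensional blocks $\mathrm{span}\{(e_k,0),(0,e_k)\}$, observe that both projections act block-diagonally (as the identity or zero on blocks contained entirely in ${\bold X}_\mu^+$, and as ${\bold P}_{\lambda_k}^{\pm}$ on the split blocks with $k\in N_\mu^-$), and bound the block norms $1/\sqrt{1-(2/\alpha)^2/\lambda_k}$ by $(1-(2/\alpha)/\mu)^{-1}$ using $\xi_k^-\le-\mu$. Your explicit inversion of the formula for $\xi_k^-$ to obtain that last inequality is exactly the step the paper asserts without detail, so the two arguments coincide.
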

\begin{proof}
Let ${\bold P}_{k}^{-} := {\bold P}_{\lambda_k}^{-} \circ 
{\bold P}_k$ where ${\bold P}_{\lambda_k}^{-}:\mathrm{span} \left\{(e_k,0),  (0,e_k) \right\} \to \mathbb{C}\cdot {\bold e}_{k}^{-}$ as in Proposition \ref{one-dim-projections} and 
${\bold P}_k:{\bold X}\to \mathrm{span} \left\{(e_k,0),  (0,e_k) \right\}$ is the projection defined as follows: for any $(u,v)\in \bold X$ ${\bold P}_k(u,v):=(u_k\cdot e_k,v_k\cdot e_k)$. Since all of the one dimensional components of ${\bold X}_{\mu}^{-}$ are orthogonal with respect to each other and 
$$
1/\sqrt{1-(2/\alpha)^2/\lambda_k } \leq 1/(1-(2/\alpha)/\mu) \mbox{ for } k\in N_{\mu}^{-},
$$
we infer that, for any $(u,v)\in {\bold X}$,
\begin{align*}
\|{\bold P}_{\mu}^{-} (u,v)\|^2 & =  \sum_{k\in N_{\mu}^{-}} \|{\bold P}_{k}^{-}(u,v)\|^2 \leq \sum_{k\in N_{\mu}^{-}}
1/(1-(2/\alpha)^2/\lambda_k) \cdot \|(u_k\cdot e_k,v_k\cdot e_k) \|_{\bold X}^{2}\\
& \leq 1/(1-(2/\alpha)/\mu)^2 \sum_{k\in N_{\mu}^{-}} \|(u_k\cdot e_k,v_k\cdot e_k) \|_{\bold X}^{2} \leq 1/(1-(2/\alpha)/\mu)^2 \cdot \|(u,v)\|_{\bold X}^{2}.
\end{align*}
In order to estimate the norm of ${\bold P}_{\mu}^{+}$ observe that ${\bold X}_{\mu}^{+}$ can be split into four orthogonal parts
\begin{equation}\label{spliting-mu-plus}
{\bold X}_{\mu}^{+} = {\bold X}_{\mu,\mathbb{R}}^{+}
\oplus {\bold X}_{\mu,\mathbb{C}}^{+} \oplus {\bold X}_{\mu,-}^{+} \oplus {\bold X}^{0}
\end{equation}
where
$$
{\bold X}_{\mu,\mathbb{R}}^{+}:= \bigoplus_{k\in N_{\mu,\mathbb{R}}^{+}}
\mathrm{span}\left\{ {\bold e}_{k}^{-},{\bold e}_{k}^{+} \right\}, \ \ \  \
{\bold X}_{\mu,\mathbb{C}}^{+} := \bigoplus_{k\in N_{\mu,\mathbb{C}}^{+}}
\mathrm{span}\left\{ {\bold e}_{k}^{-},{\bold e}_{k}^{+} \right\}, \ \ \ \
{\bold X}_{\mu,-}^{+}:= \overline{\bigoplus_{k\in N_{\mu}^{-}} \mathbb{C}\cdot {\bold e}_{k}^{+}}
$$
with 
\begin{align*}
N_{\mu,\mathbb{R}}^{+} & := \{ k\in \mathbb{N} \, \mid\,  \lambda_k > (2/\alpha)^2,\, \xi_{k}^{-}> -\mu \},\\
N_{\mu, \mathbb{C}}^{+}& := \{ k\in \mathbb{N} \, \mid\,  0<\lambda_k < (2/\alpha)^2  \}.
\end{align*}
Clearly, in view of the orthogonality of these components, for any $(u,v)\in {\bold X}$,
\begin{equation}\label{ortho-sum-proj-plus}
\|{\bold P}_{\mu}^{+} (u,v)\|_{\bold X}^{2}
= \|{\bold P}_{\mu, \mathbb{R}}^{+} (u,v)\|_{\bold X}^{2} +\|{\bold P}_{\mu, \mathbb{C}}^{+} (u,v)\|_{\bold X}^{2}+\|{\bold P}_{\mu,-}^{+} (u,v)\|_{\bold X}^{2} + \|{\bold P}^{0} (u,v)\|_{\bold X}^{2} 
\end{equation}
where ${\bold P}_{\mu, \mathbb{R}}^{+}$, 
${\bold P}_{\mu, \mathbb{C} }^{+}$, ${\bold P}_{\mu,-}^{+}$ 
and ${\bold P}^{0}$ are projections onto the proper components. Therefore
$$
\|{\bold P}_{\mu, \mathbb{R}}^{+} (u,v) \|_{\bold X}^{2}
= \sum_{k\in N_{\mu,\mathbb{R}}^{+}} \|(u_k\cdot e_k,v_k\cdot e_k)\|_{\bold X}^{2}, \ \ \ 
\|{\bold P}_{\mu, \mathbb{C}}^{+} (u,v)\|_{\bold X}^{2}
=\sum_{k\in N_{\mu,\mathbb{C}}^{+}} \|(u_k\cdot e_k,v_k\cdot e_k)\|_{\bold X}^{2}
$$
and
$$
\|{\bold P}^{0} (u,v)\|_{\bold X}^{2}
= \sum_{k\in N^0} \|(u_k\cdot e_k,v_k\cdot e_k)\|_{\bold X}^{2}.
$$
Reasoning as in the case of ${\bold P}_{\mu}^{-}$ and using Proposition \ref{one-dim-projections}, we get
$$
\|{\bold P}_{\mu,-}^{+}(u,v)\|_{\bold X}^{2} 
 \leq 1/(1-(2/\alpha)/\mu)^2 \sum_{k\in N_{\mu}^{-}} \|(u_k\cdot e_k,v_k\cdot e_k) \|_{\bold X}^{2}.
$$
Now applying all the estimates for the components of \eqref{ortho-sum-proj-plus} we get
$$
\|{\bold P}_{\mu}^{+} (u,v)\|_{\bold X}^{2} 
\leq 1/(1-(2/\alpha)/\mu)^2 \sum_{k\in \mathbb{N}} \|(u_k\cdot e_k,v_k\cdot e_k)\|_{\bold X}^{2} = 
1/(1-(2/\alpha)/\mu)^2 \|(u, v)\|_{\bold X}^{2},
$$
which ends the proof.
\end{proof}
\begin{prop}\label{to-obtain-(D)-property}
For any $\mu>2/\alpha$ the following properties hold
\begin{enumerate}[{\em (i)}]
\item ${\bold X}_{\mu}^{+} \subset D({\bold A})$, ${\bold A} ({\bold X}_{\mu}^{+})\subset {\bold X}_{\mu}^{+}$;
\item ${\bold A} ({\bold X}_{\mu}^{-}\cap D({\bold A}))\subset {\bold X}_{\mu}^{-}$;
 \item $\| {\bold A} (u,v) \|_{\bold X} \leq \mu \cdot (1+\sqrt{2})    
 \| (u,v)\|_{\bold X}$ for any $(u,v)\in {\bold X}_{\mu}^{+}$;
\item $\left\langle {\bold A}(u,v), (u,v) \right\rangle_{\bold X} \leq - \mu \|(u,v)\|_{\bold X}^{2}$ for all $(u,v) \in {\bold X}_{\mu}^{-}\cap D({\bold A})$ and, in consequence,
$$
\| e^{t{\bold A}} (u,v)\|_{\bold X} \leq e^{-\mu t} \|(u,v)\|_{\bold X}
\ \ \mbox{ for any } (u,v)\in {\bold X}_{\mu}^{-}.
$$
\end{enumerate}
\end{prop}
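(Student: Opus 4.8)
The plan is to reduce everything to the explicit block structure of $\sigma({\bold A})$ supplied by Corollary \ref{cor_properties_of_A-characterization} and Proposition \ref{series-in-boldX}, treating the two spectral subspaces separately. On ${\bold X}_{\mu}^{-}$ the operator acts diagonally with real eigenvalues $\xi_{k}^{-}\leq-\mu$, so (ii) and (iv) will be soft consequences of orthogonality; the real work is the uniform bound (iii), from which (i) then follows by a closedness argument using Proposition \ref{thm_general_properties_of_A}(i).

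First I would treat ${\bold X}_{\mu}^{-}$, which by the preceding proposition is the closed span of the one-dimensional eigenlines $\mathbb{C}\cdot{\bold e}_{k}^{-}$, $k\in N_{\mu}^{-}$. For $k\neq l$ the vectors ${\bold e}_{k}^{-}=(e_k,\xi_{k}^{-}\cdot e_k)$ and ${\bold e}_{l}^{-}$ are orthogonal in ${\bold X}$, since $e_k\perp e_l$ both in $X$ and in $X^{1/2}$ (eigenvectors of the self-adjoint $A$ for distinct eigenvalues). Writing $w=\sum_{k\in N_{\mu}^{-}}c_k\,{\bold e}_{k}^{-}\in{\bold X}_{\mu}^{-}\cap D({\bold A})$ and using ${\bold A}{\bold e}_{k}^{-}=\xi_{k}^{-}{\bold e}_{k}^{-}$, the series ${\bold A}w=\sum_k c_k\xi_{k}^{-}{\bold e}_{k}^{-}$ again lies in the closed span, which gives (ii). Since each $\xi_{k}^{-}$ is real with $\xi_{k}^{-}\leq-\mu$, orthogonality yields $\langle{\bold A}w,w\rangle_{\bold X}=\sum_k|c_k|^2\,\xi_{k}^{-}\,\|{\bold e}_{k}^{-}\|_{\bold X}^{2}\leq-\mu\|w\|_{\bold X}^{2}$, which is the \emph{dissipativity} estimate (iv). For the semigroup bound I would avoid any energy argument and use instead that $e^{t{\bold A}}{\bold e}_{k}^{-}=e^{t\xi_{k}^{-}}{\bold e}_{k}^{-}$ with $|e^{t\xi_{k}^{-}}|\leq e^{-\mu t}$; orthogonality then gives $\|e^{t{\bold A}}w\|_{\bold X}\leq e^{-\mu t}\|w\|_{\bold X}$ on the dense eigenspan, hence everywhere on ${\bold X}_{\mu}^{-}$ by the uniform bound and continuity.

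Next I would establish (iii). Grouping eigenvectors by the index $k$, set ${\bold X}_{e_k}:=\mathrm{span}\{(e_k,0),(0,e_k)\}$ and $W_k:={\bold X}_{\mu}^{+}\cap{\bold X}_{e_k}$; these are mutually orthogonal and ${\bold A}$-invariant, so $\|{\bold A}|_{{\bold X}_{\mu}^{+}}\|=\sup_k\|{\bold A}|_{W_k}\|$ and it suffices to bound each block by $(1+\sqrt{2})\mu$. Four cases occur. If $\lambda_k>(2/\alpha)^2$ and $k\notin N_{\mu}^{-}$, then $W_k={\bold X}_{e_k}$ and Proposition \ref{two-dim-operators} gives, with $r_k=(2/\alpha)^2/\lambda_k\in(0,1)$, the identities $\|{\bold A}_{e_k}\|=(\alpha\lambda_k/2)(1+\sqrt{1+r_k})$ and $|\xi_{k}^{-}|=(\alpha\lambda_k/2)(1+\sqrt{1-r_k})$ (the latter read off from Corollary \ref{cor_properties_of_A-characterization}); since $k\notin N_{\mu}^{-}$ forces $|\xi_{k}^{-}|<\mu$, the elementary inequality $(1+\sqrt{1+r})/(1+\sqrt{1-r})\leq1+\sqrt{2}$ on $(0,1)$ yields $\|{\bold A}_{e_k}\|<(1+\sqrt{2})\mu$. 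If $0<\lambda_k<(2/\alpha)^2$ (a complex conjugate pair) or $\lambda_k=(2/\alpha)^2$ (the Jordan block sitting in ${\bold X}^{0}$), then $r_k\geq1$ and monotonicity of $g$ gives $\|{\bold A}_{e_k}\|=(2/\alpha)g(r_k)\leq(2/\alpha)g(1)=(2/\alpha)(1+\sqrt{2})<(1+\sqrt{2})\mu$. Finally, if $k\in N_{\mu}^{-}$ then $W_k=\mathbb{C}\cdot{\bold e}_{k}^{+}$ is a single eigenline and $\|{\bold A}|_{W_k}\|=|\xi_{k}^{+}|<2/\alpha<\mu$. Taking the supremum proves (iii).

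Finally (i) follows from (iii) by closedness. The algebraic span of the eigenvectors generating ${\bold X}_{\mu}^{+}$ together with ${\bold X}^{0}$ is dense in ${\bold X}_{\mu}^{+}$, is contained in $D({\bold A})$, is carried into itself by ${\bold A}$, and on it ${\bold A}$ obeys the uniform bound from (iii). Since ${\bold A}$ is closed by Proposition \ref{thm_general_properties_of_A}(i), it extends continuously to the whole closure ${\bold X}_{\mu}^{+}$; hence ${\bold X}_{\mu}^{+}\subset D({\bold A})$ and, by density and continuity, ${\bold A}({\bold X}_{\mu}^{+})\subset{\bold X}_{\mu}^{+}$. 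The main obstacle is the case distinction in (iii), and in particular the algebraic identities relating $\|{\bold A}_{e_k}\|$, $|\xi_{k}^{-}|$ and $r_k$ together with the ratio inequality $(1+\sqrt{1+r})/(1+\sqrt{1-r})\leq1+\sqrt{2}$; once these are in hand, every other assertion is bookkeeping on the orthogonal spectral decomposition.
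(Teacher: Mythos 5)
Your proof is correct and follows essentially the same route as the paper's: the same orthogonal block decomposition of ${\bold X}_{\mu}^{-}$ and ${\bold X}_{\mu}^{+}$ into eigenlines and two-dimensional spans, the same use of Proposition \ref{two-dim-operators} and Corollary \ref{cor_properties_of_A-characterization} for the per-block norm bounds, and the same closedness/density arguments for (i) and (ii). The only cosmetic differences are that in (iii) you handle the real blocks with $\lambda_k>(2/\alpha)^2$, $k\notin N_{\mu}^{-}$ via the ratio identity $\|{\bold A}_{e_k}\|/|\xi_{k}^{-}|=(1+\sqrt{1+r_k})/(1+\sqrt{1-r_k})\leq 1+\sqrt{2}$ combined with $|\xi_{k}^{-}|<\mu$, where the paper instead uses the monotonicity of $g$ and the threshold $(2/\alpha)^2/\lambda_k>1-(1-(2/\alpha)/\mu)^2$, and that in (iv) you derive the semigroup decay from the diagonal action $e^{t{\bold A}}{\bold e}_{k}^{-}=e^{t\xi_{k}^{-}}{\bold e}_{k}^{-}$ rather than as a consequence of the dissipativity inequality.
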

\begin{proof} (i) Note that the space ${\bold X}_{\mu}^{+}$ is the closure of a sum of invariant spaces on which the operator ${\bold A}$ is bounded with the norms estimated by the same constant. Then the completeness of  $\bold X$ and the closedness of  ${\bold A}$ show that ${\bold X}_{\mu}^{+}\subset D({\bold A})$. The invariance is immediate.\\
\indent (ii) Take any $(u,v)\in {\bold X}_{\mu}^{-} \cap D({\bold A})$. Then
\begin{equation}\label{series-in-X-minus}
(u,v)= \sum_{k\in N_{\mu}^{-}} \alpha_k \cdot {\bold e}_{k}^{-}
\end{equation}
where $\alpha_k\in \mathbb{C}$, $k\in N_{\mu}^{-}$. In particular
$$
u = \sum_{k\in N_{\mu}^{-}} \alpha_k\cdot e_k \ \mbox{ and } \
v = \sum_{k\in N_{\mu}^{-}} \alpha_k \xi_{k}^{-}\cdot e_k.
$$
Since $v\in X^{1/2}$ and $u+\alpha\cdot v \in D(A)$ we have, in view of Corollary \ref{cor_properties_of_A-characterization} (i),
\begin{equation}\label{inv-convergence}
\sum_{k\in N_{\mu}^{-}} \lambda_k |\alpha_k\xi_{k}^{-}|^2  < +\infty \ \ \mbox{ and } \ \        \sum_{k\in N_{\mu}^{-}} \lambda_{k}^2 |1+\alpha\xi_{k}^{-}|^2|\alpha_k|^2 = 
\sum_{k\in N_{\mu}^{-}} |\xi_{k}^{-}|^4|\alpha_k|^2 < +\infty.
\end{equation}
Observe also that 
\begin{align*}
\|{\bold A}(\alpha_k \cdot{\bold e}_{k}^{-})\|_{\bold X}^{2} & 
= |\alpha_k \xi_{k}^{-}|^2 \|(e_k, \xi_{k}^{-}\cdot e_k)\|_{\bold X}^{2} =
(\lambda_k + |\xi_{k}^{-}|^2)|\xi_{k}^{-}|^2 |\alpha_k|^2, 
\end{align*}
which, in view of \eqref{inv-convergence}, implies the convergence of the series
$$
\sum_{k\in N_{\mu}^{-}}  {\bold A}(\alpha_k \cdot {\bold e}_{k}^{-}) 
\mbox{ in } {\bold X}.
$$
Hence, by use of the closedness of ${\bold A}$, we get 
\begin{equation}\label{A-value-on-X_muninus}
{\bold A}(u,v) = \sum_{k\in N_{\mu}^{-}} {\bold A}(\alpha_k \cdot{\bold e}_{k}^{-}) \in {\bold X}_{\mu}^{-}.
\end{equation}
This completes the proof of the invariance.\\
\indent (iii) Take any $(u,v)\in {\bold X}_{\mu}^{+}$. Using \eqref{spliting-mu-plus}, we have
\begin{align*}
{\bold P}_{\mu, \mathbb{R}}^{+} (u,v) 
& = \! \sum_{k\in N_{\mu, \mathbb{R}}^{+}}  (u_k \cdot e_k,v_k \cdot e_k),\\
{\bold P}_{\mu, \mathbb{C}}^{+} (u,v) 
& = \! \sum_{k\in N_{\mu, \mathbb{C}}^{+}}  (u_k \cdot e_k,v_k \cdot e_k),\\  
{\bold P}_{\mu,-}^{+} (u,v) 
& = \! \sum_{k\in N_{\mu}^{-}}  p_{k}^{+}(u_k,v_k) \cdot {\bold e}_{k}^{+},\\
{\bold P}^{0} (u,v) 
& = \! \sum_{k\in N^{0}}  (u_k \cdot e_k,v_k \cdot e_k).
\end{align*}
By the invariance and orthogonality  of the components
\begin{equation}\label{norm-estimate-eq}
\|{\bold A} {\bold P}_{\mu}^{+}(u,v) \|_{\bold X}^{2} = 
\|{\bold A} {\bold P}_{\mu, \mathbb{R}}^{+} (u,v) \|_{\bold X}^{2}
+ \|{\bold A} {\bold P}_{\mu, \mathbb{C}}^{+} (u,v) \|_{\bold X}^{2}
+ \|{\bold A} {\bold P}_{\mu,-}^{+} (u,v) \|_{\bold X}^{2}+\|{\bold A} {\bold P}^{0} (u,v) \|_{\bold X}^{2}.
\end{equation}
Clearly, by use of Proposition \ref{two-dim-operators} and the fact that $g$ is decreasing and 
$$  
(2/\alpha)^2/\lambda_k > 1-(1-(2/\alpha)/\mu)^2 \mbox{ for all } k\in N_{\mu, \mathbb{R}}^{+},
$$ 
we obtain
\begin{align*}
\|{\bold A} {\bold P}_{\mu, \mathbb{R}}^{+} (u,v) \|_{\bold X}^{2} & =  \sum_{k\in N_{\mu, \mathbb{R}}^{+}}  \left\| {\bold A} (u_k \cdot e_k,v_k \cdot e_k)  \right\|_{\bold X}^{2}\\
& \leq \left[ (2/\alpha) \cdot  g(1-(1-(2/\alpha)/\mu)^2) \right]^2 \cdot  \|{\bold P}_{\mu, \mathbb{R}}^{+} (u,v) \|_{\bold X}^{2} \\
& = \mu^2 \cdot 
\left[ \frac{1+\sqrt{1+\varrho(2-\varrho)}}{2-\varrho} \right]^2\cdot  \|{\bold P}_{\mu, \mathbb{R}}^{+} (u,v) \|_{\bold X}^{2}\\
& \leq \mu^2 \cdot (1+\sqrt{2})^2 \cdot  \|{\bold P}_{\mu, \mathbb{R}}^{+} (u,v) \|_{\bold X}^{2}
\end{align*}
where $(0,1)\ni\varrho:=(2/\alpha)/\mu$.
Next, again using Proposition \ref{two-dim-operators} and  the inequality
$$
(2/\alpha)^2 / \lambda_k > 1 \mbox{ for all }  k\in N_{\mu,\mathbb{C}}^{+}
$$
we have
$$
\|{\bold A} {\bold P}_{\mu, \mathbb{C}}^{+} (u,v) \|_{\bold X}^{2} =  \sum_{k\in N_{\mu, \mathbb{C}}^{+}}  \left\| {\bold A} (u_k \cdot e_k,v_k \cdot e_k) \right\|_{\bold X}^{2} \leq 
\left[ (2/\alpha) \cdot g(1)\right]^2 \|{\bold P}_{\mu, \mathbb{C}}^{+} (u,v) \|_{\bold X}^{2} .
$$
Furthermore, taking into considerations the inequality
$$
0\geq \xi_{k}^{+} \geq -\frac{2}{\alpha} \cdot \frac{1}{2-(2/\alpha)/\mu} \  \mbox{ for all } \  k\in N_{\mu}^{-},
$$
we get
\begin{align*}
\|{\bold A} {\bold P}_{\mu,-}^{+} (u,v) \|_{\bold X}^{2} & =  \sum_{k\in N_{\mu}^{-}} \left\|  {\bold A} (p_{k}^{+}(u_k,v_k)\cdot {\bold e}_{k}^{+}) \right\|_{\bold X}^{2} 
=  \sum_{k\in N_{\mu}^{-}} | \xi_{k}^{+} |^2 \left\| p_{k}^{+}(u_k,v_k) \cdot 
{\bold e}_{k}^{+}\right\|_{\bold X}^{2}\\
& \leq \left[ (2/\alpha)/(2-(2/\alpha)/\mu) \right]^2 \cdot\|{\bold P}_{\mu,-}^{+} (u,v) \|_{\bold X}^{2} \leq (2/\alpha)^2 \|{\bold P}_{\mu,-}^{+} (u,v) \|_{\bold X}^{2}. 
\end{align*}
Observe also that, by use of Proposition \ref{two-dim-operators}, one has
$$
\|{\bold A} {\bold P}^{0} (u,v)\|_{\bold X}^{2} =   \sum_{k\in N^{0}}  \left\| {\bold A} (u_k \cdot e_k,v_k \cdot e_k)  \right\|_{\bold X}^{2} \leq \left[ (2/\alpha)\cdot g(1)\right]^2\cdot  \|{\bold P}^{0} (u,v) \|_{\bold X}^{2}.
$$
Combining the above inequalities together with \eqref{norm-estimate-eq} we arrive at the desired assertion.\\
\indent (iv) Take any $(u,v)\in {\bold X}_{\mu}^{-}\cap D({\bold A})$. Clearly, there are 
$\alpha_k \in \mathbb{C}$, $k\in N_{\mu}^{-}$, such that \eqref{series-in-X-minus} holds. In view of \eqref{A-value-on-X_muninus} and the orthogonality of the components, we have
\begin{align*}
\left\langle {\bold A}(u,v), (u,v) \right\rangle_{\bold X} & = 
\sum_{k\in N_{\mu}^{-}} \left\langle {\bold A}(\alpha_k \cdot{\bold e}_{k}^{-}), \alpha_k\cdot {\bold e}_{k}^{-} \right\rangle_{\bold X} = \sum_{k\in N_{\mu}^{-}} \xi_k^{-} \|\alpha_k\cdot {\bold e}_{k}^{-}\|_{\bold X}^{2}\leq - \mu \|(u,v)\|_{\bold X}^{2}, 
\end{align*}
which ends the proof.
\end{proof}
\begin{wn}
The operator ${\bold A}$ has property $(UHBD)$.
\end{wn}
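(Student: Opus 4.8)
The plan is to verify property $(UHBD)$ directly, reading off each ingredient from the results already established in this section, with the choices
$$
\mathbb{X} = \mathbb{V} = {\bold X}, \qquad \mathbb{A} = {\bold A}, \qquad \mu_0 = 2/\alpha, \qquad M = 1+\sqrt{2}, \qquad m\equiv 1.
$$
First I would dispose of the hypotheses $(V_1)$--$(V_3)$. Since here $\mathbb{V}$ coincides with $\mathbb{X} = {\bold X}$ (and carries the same norm), the inclusions $D({\bold A})\subset {\bold X}\subset {\bold X}$ and the continuous embedding $(V_2)$ are trivial, while $(V_3)$ holds because $\{e^{t{\bold A}}\}_{t\ge 0}$ is already a $C_0$-semigroup on ${\bold X}$ (recall that $-{\bold A}$ is sectorial). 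No genuine work is needed here, so the whole content of the statement lies in $(D_{\mu,+})$, $(D_{\mu,-})$ and $(D_{\mu,0})$.

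Next, I fix $\mu>\mu_0=2/\alpha$ and take the spectral decomposition ${\bold X}={\bold X}_\mu^{-}\oplus {\bold X}_\mu^{+}$ associated with the spectral sets $\sigma_\mu^{-},\sigma_\mu^{+}$ constructed above; both summands are closed subspaces of the Hilbert space ${\bold X}$. For $(D_{\mu,+})$ I invoke Proposition \ref{to-obtain-(D)-property}: part (i) yields ${\bold X}_\mu^{+}\subset D({\bold A})$ together with the invariance ${\bold A}({\bold X}_\mu^{+})\subset {\bold X}_\mu^{+}$, and part (iii) yields $\|{\bold A}(u,v)\|_{\bold X}\le \mu(1+\sqrt 2)\|(u,v)\|_{\bold X}$ for $(u,v)\in {\bold X}_\mu^{+}$, which is exactly $\|{\bold A}w\|_{\mathbb V}\le \mu M\|w\|_{\mathbb V}$ since $\mathbb{V}={\bold X}$ and $M=1+\sqrt 2$.

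For $(D_{\mu,-})$ the invariance ${\bold A}({\bold X}_\mu^{-}\cap D({\bold A}))\subset {\bold X}_\mu^{-}$ is Proposition \ref{to-obtain-(D)-property}(ii), and the contraction estimate $\|e^{t{\bold A}}(u,v)\|_{\bold X}\le e^{-\mu t}\|(u,v)\|_{\bold X}$ for $(u,v)\in {\bold X}_\mu^{-}$ is Proposition \ref{to-obtain-(D)-property}(iv). The only point to notice is that, because $\mathbb{V}=\mathbb{X}$ and $m\equiv 1$, the three semigroup inequalities required by $(D_{\mu,-})$ --- the $\mathbb{X}$-contraction, the smoothing bound $\|e^{t{\bold A}}w\|_{\mathbb V}\le m(t)e^{-\mu t}\|w\|_{\mathbb X}$, and the $\mathbb{V}$-contraction --- all collapse to the single inequality of part (iv). Finally $(D_{\mu,0})$ reduces to $\int_0^t e^{-\mu s}\,ds<+\infty$, which is immediate, and $m\equiv 1$ is continuous and positive. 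This verifies $(UHBD)$ with the stated constants.

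I do not expect a real obstacle in this assembly, since the analytic difficulties were already resolved upstream: the uniform bound $M=1+\sqrt 2$ in Proposition \ref{to-obtain-(D)-property}(iii) rests on the orthogonal splitting \eqref{spliting-mu-plus} and on maximising $\varrho\mapsto (1+\sqrt{1+\varrho(2-\varrho)})/(2-\varrho)$ over $(0,1)$, while the dissipativity in part (iv) is precisely what makes the contraction on ${\bold X}_\mu^{-}$ work. The one conceptual point worth flagging is the choice $\mathbb{V}=\mathbb{X}$: unlike the parabolic case, where $\mathbb{V}=X^\beta\subsetneq X$ forces a nontrivial smoothing function $m(t)=M_\beta t^{-\beta}$, here ${\bold F}:{\bold X}\to{\bold X}$ is Lipschitz on all of ${\bold X}$, so no smoothing of the semigroup is needed and the constant $m\equiv 1$ suffices.
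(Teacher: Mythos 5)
Your proposal is correct and follows exactly the route the paper intends: the corollary is stated without proof precisely because it is the assembly of Proposition \ref{to-obtain-(D)-property} (parts (i)--(iv)) into the definition of $(UHBD)$ with $\mathbb{V}=\mathbb{X}={\bold X}$, $\mu_0=2/\alpha$, $M=1+\sqrt{2}$ and $m\equiv 1$, which is what you carry out. Your observation that the three semigroup inequalities in $(D_{\mu,-})$ collapse to the single contraction estimate of part (iv) when $\mathbb{V}=\mathbb{X}$ and $m\equiv 1$ is the right way to see why no smoothing function is needed here, in contrast to the parabolic case.
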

Thus, we can apply directly Corollary \ref{cor-1} with $\mu_0:=2/\alpha$ and $M:=1+\sqrt{2}$ to derive the estimate in Theorem \ref{main-res}.
\begin{uw}\label{remark-local-hyp}
We can refine our result by use of Remark \ref{remark-local-abs}.
Instead of the global Lipschitzianity of $f$, assume that, for any
$R, R'>0$, there exists $L=L_{R,R'}>0$ such that \eqref{E101} holds whenever  $\|u_1\|_{1/2}, \|u_2\|_{1/2}\leq R$ and $\|v_1\|, \|v_2\|\leq R'$. Then if $u:\mathbb{R}\to X^{1/2}$ is a $T$-periodic solution of \eqref{E25}, then $T \geq 1/L_{R,R'}\left(1+\sqrt{(1+1/\sqrt{2})(1+2/\alpha L_{R,R'})}\right)^2$.
\end{uw}

\section{Application to strongly damped beam equation}
As an illustration, let us consider the following damped beam equation
\begin{equation}\label{beam-eq}
\left\{ 
\begin{array}{ll}
u_{tt} + \alpha u_{txxxx} + \beta u_t + u_{xxxx} = h(x, u, u_t, u_x, u_{xx}), & \  x\in (0,l), \ t\geq 0,\\
u(0,t) = u(l,t)=0, & \ t>0\\ 
u_{xx}(0,t)=u_{xx}(l,t)=0, & \ t>0
\end{array} \right.
\end{equation}
where $l>0$, $\alpha, \beta>0$ and $h:[0,l]\times \mathbb{R}^4 \to \mathbb{R}$ is a continuous function such that 
$$
|h(x,z_1)-h(x, z_2)|\leq L |z_1-z_2| 
$$
for some fixed $L>0$ and all $z_1, z_2 \in\mathbb{R}^4$ and $x\in [0,l]$. Here $|\cdot |$ stands for either the Euclidean norm (in $\mathbb{R}^4$) or the absolute value (in $\mathbb{R}$). It is the so-called beam equation with strong damping coming from Ball's model for extensible beam coming from \cite{Ball}. The boundary conditions above can be replaced by $u(0,t)=u(l,t)=0$ and $u_x (0,t)=u_x (l,t)=0$, which corresponds to the way the beam's ends are fixed.\\
\indent If we define $A:D(A)\to L^2(0,l)$ with 
\begin{eqnarray*}
D(A):= \{ u\in L^2(0,l)\, \mid\, u\in W^{4,2}(0,l), u(0)=u(l)=0, \, u''(0)=u''(l)=0 \}\\
\mbox{or } \, D(A):=\{ u\in L^2 (0,l) \,\mid\, 
u\in W^{4,2} (0,l), u(0)=u(l)=0, \, u'(0)=u'(l)=0 \}
\end{eqnarray*}
(depending on the boundary conditions) and
$$
Au = u'''', \, u\in D(A).
$$
The space $X:=L^2(0,l)$ is equipped with the standard scalar product $\langle u,v\rangle_{L^2}:= \int_{0}^{l} u(s) v(s) \, d s$ and the norm $\|u\|_{L^2}:= \sqrt{\langle u,u\rangle_{L^2}}$, $u,v\in X$. The operator $A$ (in both versions) has compact resolvent and using its spectral representation one may show that $A^{1/2}u=-u''$, for $u\in X^{1/2} = D(A^{1/2})$
and $\|A^{1/4} u\|_{L^2} = \|u'\|_{L^2}$ for each $u\in X^{1/4}$. The mapping $f:X^{1/2}\times X \to X$ we define by
$$
[f(u,v)](x):= h(x,u(x), v (x), u_{x}(x), u_{xx}(x))-\beta v(x),  \, \mbox{ for a. e. } x\in [0,l].
$$ 
We denote eigenvalues of $A$ by $\lambda_k$, $k\in\mathbb{N}$. Observe that for all $k\in\mathbb{N}$ $\lambda_{k+1}\geq\lambda_k>0$ and $\lambda_k\to\infty$. One can directly verify that for any $u_1$, $u_2\in X^{1/2}$ and $v_1$, $v_2\in X$
$$
\|f(u_1,v_1)-f(u_2,v_2)\|_{X}\leq \widetilde{L}\cdot
\|(u_1,v_1)-(u_2,v_2)\|_{X^{1/2}\times X}.
$$
where  $\widetilde{L}:=\sqrt{2}L\Big(1+\max\{{\lambda_1}^{-1/4}+{\lambda_1}^{-1/2},\beta/L\}\Big)$.
Hence, we can directly apply Theorem \ref{main-res} to see that if \eqref{beam-eq} has a nontrivial $T$-periodic solution then 
$$
T \geq 1/\widetilde{L}\left(1+\sqrt{(1+1/\sqrt{2})(1+2/\alpha \widetilde{L})}\right)^2.
$$

\end{document}